\DeclareFontFamily{U}{mathx}{\hyphenchar\font45}
\DeclareFontShape{U}{mathx}{m}{n}{
<5> <6> <7> <8> <9> <10>
<10.95> <12> <14.4> <17.28> <20.74> <24.88>
mathx10
}{}
\DeclareSymbolFont{mathx}{U}{mathx}{m}{n}
\DeclareMathAccent{\widecheck}{0}{mathx}{"71}
\DeclareMathAccent{\wideparen}{0}{mathx}{"75}
\definecolor{gray}{gray}{0.5}
\newcommand{\trapez}{
\begin{tikzpicture} [scale=.23]
\draw (0,0) -- (.5,1);
\draw (0,0) -- (2,0);
\draw (1.5,1) -- (2,0);
\draw (.5,1) -- (1.5,1);
\end{tikzpicture}
}
\newcommand{\hypo}{{\mathrm{Hyp}(p_\circ,r_\circ)}}
\newcommand{\rstar}{{r_*(p,p_\circ,r_\circ)}}
\newcommand{\vertiii}[1]{{\left\vert\kern-0.25ex\left\vert\kern-0.25ex\left\vert #1
\right\vert\kern-0.25ex\right\vert\kern-0.25ex\right\vert}}
\newcommand{\VV}{{\mathrm{VBR}}}
\newcommand{\Xspace}{\mathcal V}
\newcommand{\ud}{\, \mathrm{d}}
\def\lc{\lesssim}
\def\sA{\mathscr{A}}
\def\eps{\varepsilon}
\def\bbone{{\mathbbm 1}}
\newcommand{\ci}[1]{_{{}_{\!\scriptstyle{#1}}}}
\newcommand{\Be}{\begin{equation}}
\newcommand{\Ee}{\end{equation}}
\newcommand{\Bm}{\begin{multline}}
\newcommand{\Em}{\end{multline}}
\newcommand{\nc}{n_\circ}
\newcommand{\RH}{\mathrm{RH}}
\def\intslash{\rlap{\kern .32em $\mspace {.5mu}\backslash$ }\int}
\def\qsl{{\rlap{\kern .32em $\mspace {.5mu}\backslash$ }\int_{Q_x}}}
\def\F{\mathcal F}
\def\lc{\lesssim}
\def\emph#1{{\it #1 }}
\def\diam{{\,\text{\rm diam}}}
\def\ga{\gamma}
\def\cf{{\it cf}}
\def\dist{{\mathrm{dist}}}
\def\supp{{\mathrm{supp}\,}}
\def\inn#1#2{\langle#1,#2\rangle}
\def\biginn#1#2{\big\langle#1,#2\big\rangle}
\def\Biginn#1#2{\Big\langle#1,#2\Big\rangle}
\def\jp#1{{\langle#1\rangle}}
\def\ga{\gamma} 
\def\eps{\varepsilon}
\def\ep{\epsilon}
\def\ka{\kappa}
\def\la{\lambda} \def\La{\Lambda}
\def\om{\omega} \def\Om{\Omega}
\def\vth{\vartheta}
\def\fD{{\mathfrak {D}}}
\def\fE{{\mathfrak {E}}}
\def\fQ{{\mathfrak {Q}}}
\def\fR{{\mathfrak {R}}}
\def\fS{{\mathfrak {S}}}
\def\fW{{\mathfrak {W}}}
\def\fn{{\mathfrak {n}}}
\def\bbC{{\mathbb {C}}}
\def\bbN{{\mathbb {N}}}
\def\bbR{{\mathbb {R}}}
\def\bbU{{\mathbb {U}}}
\def\bbV{{\mathbb {V}}}
\def\bbZ{{\mathbb {Z}}}
\def\R{{\mathbb {R}}}
\def\N{{\mathbb {N}}}
\def\cD{{\mathcal {D}}}
\def\cF{{\mathcal {F}}}
\def\cI{{\mathcal {I}}}
\def\cR{{\mathcal {R}}}
\def\cT{{\mathcal {T}}}
\def\cY{{\mathcal {Y}}}
\def\emph#1{{\it #1}}
\def\textbf#1{{\bf #1}}
\def\Sp{{\mathrm{Sp}}}
\def\beq{\begin{equation}}
\def\endeq{\end{equation}}
\def\bs{\begin{split}}
\def\es{\end{split}}
\theoremstyle{plain}
\newtheorem{thm}{Theorem}[section]
\newtheorem{prop}[thm]{Proposition}
\newtheorem{lemma}[thm]{Lemma}
\newtheorem{cor}[thm]{Corollary}
\newtheorem{definition}[thm]{Definition}
\newtheorem*{thm*}{Theorem}
\newtheorem*{conj*}{Conjecture}
\newtheorem*{openproblem*}{Open Problem}
\newtheorem*{hypothesis*}{Hypothesis}
\theoremstyle{remark}
\newtheorem{rem}[thm]{Remark}
\numberwithin{equation}{section}
\titleformat{\section}{\centering\normalfont\scshape}{\thesection.}{.5em}{#1}
\titleformat{\subsection}[runin]{\normalfont\itshape}{\textnormal{\thesubsection.}}{.5em}{#1.}
\titleformat{\subsubsection}[runin]{\normalfont\itshape}{\thesubsubsection.}{.5em}{#1.}
\titlespacing{\section}{0em}{1em}{0.5em}
\titlespacing{\subsection}{0em}{.5em}{0.5em}
\subjclass[2020]{42B15, 42B20, 42B25}
\keywords{Bochner-Riesz means, Sparse domination, Endpoint estimates, Weighted norm estimates, Convergence in weighted spaces}
\begin{document}
\title[Sharp sparse bounds for Bochner-Riesz operators]{Bochner-Riesz means at the critical index:
\\ Weighted and sparse bounds}
\author[D. Beltran \ \ \ \ \ \ \ J. Roos \ \ \ \ \ \ \ A. Seeger ] {David Beltran \ \ \ \ Joris Roos \ \ \ \ Andreas Seeger }

\address{David Beltran: Departament d'Anàlisi Matemàtica, Universitat de València, Dr. Moliner 50, 46100 Burjassot, Spain}

\email{david.beltran@uv.es}

\address{Joris Roos: Department of Mathematics and Statistics, University of Massachusetts Lowell, Lowell, MA 01854, USA}
\email{joris\_roos@uml.edu}

\address{Andreas Seeger: Department of Mathematics, University of Wisconsin-Madison, 480 Lincoln Dr, Madison, WI-53706, USA}
\email{seeger@math.wisc.edu}

\begin{abstract}
We consider Bochner-Riesz means on weighted $L^p$ spaces, at the critical index $\lambda(p)=d(\frac 1p-\frac 12)-\frac 12$. For every $A_1$-weight we obtain an extension of Vargas' weak type $(1,1)$ inequality in some range of $p>1$.
To prove this result we establish new endpoint results for sparse domination.
These are almost optimal
in dimension
$d= 2$; partial results as well as conditional results are proved in higher dimensions.
For the means of index $\la_*=
\frac{d-1}{2d+2}$ we prove fully optimal sparse bounds.
\end{abstract}

\date{\today}

\maketitle

\section{Introduction}
Let $\Omega$ be a convex open subset of $\bbR^d$, $d\ge 2$, containing the origin. We assume that $\Omega$ has $C^\infty$-boundary with nonvanishing Gaussian curvature. Let \[\rho(\xi):= \inf\{t>0: \xi/t\in \Omega\}\] be the Minkowski functional of $\Omega$. Then
$\rho\in C^\infty(\bbR^d\setminus\{0\})$, $\rho$ is homogeneous of degree $1$, $\rho(\xi)>0$ for $\xi\neq 0$ and $\rho(\xi)=1$ on the boundary $\partial \Omega$.
Let $a>0$.
Given $\lambda >0$, we define the {\it Riesz means} of index $\lambda$ of the inverse Fourier integral by
\[ \cR^\la_{a,t} f(x):= \frac{1}{(2\pi)^{d}}
\int \Big(1- \frac{\rho(\xi)^a}{t^a} \Big)_+^\la \widehat f(\xi) e^{i \inn x\xi} \ud\xi, \]
where $\widehat f(\xi)=\int f(y)e^{-i\inn{y}{\xi} } \ud y$ denotes the Fourier transform of a Schwartz function $f$ on $\bbR^d$ and $s_+:=\max\{s,0\}$.
The case of $\Omega=\{\xi:|\xi|\le 1\} $ yields $\rho(\xi)=|\xi|$; in this case the means with $a=1$ are the {\it classical radial Riesz means} of index $\la$ while the case $a=2$ corresponds to the {\it Bochner-Riesz means} of index $\la$.

Given $1 \leq p < \frac{2d}{d+1}$, the value
\Be \label{eq:critical-index}\la(p):= d\Big(\frac 1p-\frac 12\Big)-\frac 12\Ee
is referred to as the \textit{critical index}, and it is conjectured that in this range the operators $\cR^{\la(p)}_{a,t}$ are of weak type $(p,p)$. The case $p=1$, corresponding to the index $\lambda(1)=\frac{d-1}{2}$, was first proved by Christ \cite{Christ-rough}
and later substantially extended by Vargas \cite{Vargas96} who proved an $L^1(w)\to L^{1,\infty}(w)$ result for all $A_1$ weights $w$, that is, for all $w \in L^1_{\mathrm{loc}}(\R^d)$ satisfying the pointwise inequality $Mw\lc w$, where $M$ denotes the Hardy-Littlewood maximal operator.
Sharp weak type endpoint results for $p>1$
were proved by Christ \cite{Christ-BR87} in the range $1< p< \frac{2(d+1)}{d+3}$, by Tao for $p=\frac{2(d+1)}{d+3}$, and complete results in two dimensions were obtained by one of the authors in \cite{seeger-BRwt}. Later, Tao \cite{Tao-Indiana1998} showed that for $1<p<\frac{2d}{d+1}$ the weak type endpoint estimates follow from the corresponding strong type results for all $\la>\la(p)$. For $d=2$ these are well-known and due to Carleson-Sj\"olin \cite{CarlesonSjolin}, allowing to recover the weak-type results from \cite{seeger-BRwt}. In higher dimensions many sharp partial results for the strong type estimate have been proved; see
\cite{fefferman69, LeeSanghyk2004, LeeSanghyk2018, Wu-JdA, GuoWangZhang2022}
and the references in those papers.

The goal of this paper is to establish new estimates for the operators $\cR_{a,t}^{\lambda(p)}$ whenever $1 < p < \frac{2d}{d+1}$.

\subsection{Weighted estimates}
We will be concerned with weights in the Muckenhoupt $A_s$ classes and the reverse Hölder classes $\RH_\sigma$; see \S\ref{sec:weighted} for the precise definitions. By testing against Schwartz functions it is easy to see that for $p<\frac{2d}{d+1}$ the operators $\cR_{a,t}^{\lambda(p)}$ fail to satisfy weighted weak-type $(p,p)$ estimates for the power weights $|x|^\eps$ for any $\eps>0$. This rules out, in particular, the Muckenhoupt $A_s$ classes for any $s>1$ (which can also be ruled out by the weak-type version of Rubio de Francia's extrapolation theorem \cite{RdF1984}). However, it is natural to ask whether the $L^1(w)\to L^{1,\infty}(w)$ estimate for $A_1$ weights $w$ has an extension for the critical $\la(p)$ and some $p>1$, and what the $p$-range of this
extension is. We give an affirmative answer to the first part of this question.

\begin{thm}\label{thm:A1}
Let $a>0$. For every $w\in A_1$ there exists an exponent $p_1(w)>1$ such that the operators $\cR_{a,t}^{\la(p)}$ are bounded from $L^p(w)$ to $L^{p,\infty}(w)$ for $1 \leq p < p_1(w)$, uniformly in $t>0$. Moreover,
$\lim_{t\to \infty} \| \cR^{\la(p)}_{a,t} f-f\|_{L^{p,\infty}(w) } =0$
for all $f\in L^p(w)$.
\end{thm}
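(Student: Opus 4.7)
The plan is to reduce Theorem~\ref{thm:A1} to sparse domination bounds for the operators $\cR^{\la(p)}_{a,t}$ at the critical index and then to transfer these sparse bounds to weighted weak-type estimates using the classical machinery. For $w\in A_1$ it is standard that there exists $\sigma=\sigma(w)>1$ with $w\in \RH_\sigma$, and this reverse Hölder exponent is ultimately the only feature of the weight that determines the range $p\in [1,p_1(w))$.

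First, by the dilation $f(\cdot)\mapsto f(t\cdot)$, which preserves the $A_1$ class with the same constant, one may reduce to $t=1$. The operator $\cR^{\la(p)}_{a,1}$ is then decomposed dyadically in frequency into single-scale pieces $\cR_k$ localized to annuli of width $2^{-k}$ about $\partial\Omega$. By the choice of $\la(p)$, the resulting pieces sit exactly at the borderline between summable kernel estimates and operator estimates requiring $L^p$-restriction input: their symbols have size $2^{-k\la(p)}$ while, via stationary phase, their kernels concentrate near dilates of the dual surface with $L^1$-mass $\sim 2^{k\la(p)}$. The main technical work is to prove sparse bounds $\Lambda_{p_\circ,r_\circ}$ for each $\cR_k$ with parameters $(p_\circ,r_\circ)$ lying strictly inside a region governed by restriction, the strong-type Bochner--Riesz bounds above the critical index, and the kernel decay on spatial scales $\gtrsim 2^k$; this is arranged so that summation in $k$ converges with constants uniform in $t$. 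The admissible region is essentially sharp in $d=2$ and known only partially or conditionally in higher dimensions, matching the scope of the results announced in the abstract.

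Next, I invoke the standard implication relating a $(p_\circ,r_\circ)$-sparse bound to weighted weak-type $(p,p)$ estimates: for $w\in A_1\cap \RH_\sigma$ such an implication holds provided the strict inequalities $p_\circ<p$ and $r_\circ<\sigma'$ are satisfied. Since the available sparse exponents form an open region that touches the corner $(1,1)$ in the sense produced by the previous step, I first fix $r_\circ$ close enough to $1$ depending on $\sigma(w)$ and then choose $p$ slightly larger than $1$; this defines $p_1(w)>1$. For the convergence claim I use a density argument: Schwartz functions are dense in $L^p(w)$ for $A_1$ weights, and for Schwartz $f$ one has $\cR^{\la(p)}_{a,t}f\to f$ pointwise with a uniform integrable majorant, so convergence in $L^p(w)$, and a fortiori in $L^{p,\infty}(w)$, follows from dominated convergence. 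The uniform boundedness of $\cR^{\la(p)}_{a,t}$ on $L^{p,\infty}(w)$ combined with the quasi-triangle inequality in that space closes the standard $\varepsilon/3$ argument for general $f\in L^p(w)$.

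The main obstacle is the sparse bound at the critical index. Above the critical index each single-scale operator carries genuine decay in $k$ and the summation is essentially trivial, whereas at $\la=\la(p)$ the scales contribute equally and any logarithmic loss per scale would be fatal. The paper's contribution is to overcome this by sharp endpoint sparse estimates for each $\cR_k$, using refined local restriction and broad--narrow analysis; the dimension-dependent nature of the conclusions in Theorem~\ref{thm:A1}, through the value of $p_1(w)$, stems from the current reach of these restriction-type ingredients.
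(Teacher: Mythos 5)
Your overall skeleton (sparse domination $+$ self-improvement of $A_1$ to $A_1\cap\RH_\sigma$ $+$ a sparse-to-weighted-weak-type transfer $+$ density for the convergence claim) is the same as the paper's, but the transfer step as you state it cannot work. You ask for a $(p_\circ,r_\circ)$ sparse bound for $\cR^{\la(p)}_{a}$ with $p_\circ<p$ and then deduce weak type $(p,p)$. For the operator at the critical index $\la(p)$, the necessary conditions for sparse domination confine $(\tfrac1{p_\circ},\tfrac1{r_\circ})$ to the trapezoid $\trapez_d(\la(p))$, whose right-hand edge $P_1P_2$ has abscissa $\tfrac{2\la(p)+d+1}{2d}=\tfrac1p$; hence no sparse bound with first exponent strictly smaller than $p$ exists, and the ``open region touching $(1,1)$'' you plan to exploit is empty to the right of that edge. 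A sanity check that the gap is real: if such an input existed, the Bernicot--Frey--Petermichl transfer with $p_\circ<p<r_\circ'$ would give \emph{strong} $L^p(w)$ bounds, in particular unweighted strong $(p,p)$ bounds at the critical index, which are known to fail. The correct route is genuinely endpoint: one needs $\cR^{\la(p)}_a\in\Sp(p,q)$ with first exponent \emph{equal} to $p$ and $q>q_{\mathrm{opt}}=\tfrac{(d-1)p}{d+1-2p}$ (unconditional for $\la(p)>\tfrac{d-1}{2(d+1)}$, i.e. $p<\tfrac{2(d+1)}{d+3}$), combined with the Frey--Nieraeth weak-type theorem, which requires $w\in A_1\cap\RH_\sigma$ with $\sigma\ge (q'/p)'$; the exponent arithmetic (after reducing to $\sigma(w)<\tfrac{d+3}{2}$) is what produces $p_1(w)=1+\tfrac{d-1}{d+1}(1-\tfrac1{\sigma(w)})$ and keeps $\la(p)$ in the unconditional range. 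Relatedly, your sketch of the sparse input itself --- single-scale sparse bounds for the dyadic pieces summed in $k$ --- is exactly what is known to give only the interior of the trapezoid; the endpoint bounds with first exponent $p$ cannot be obtained by summing single-scale estimates and are the main content of the paper (refined kernel-localized decomposition, Calder\'on--Zygmund/induction-on-scales iteration, and the vector-valued $\VV(p,r)$ inequalities).

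The convergence argument is also flawed. For Schwartz $f$ the kernel of $\cR^{\la(p)}_{a,t}$ decays only like $|x|^{-(\frac{d+1}{2}+\la(p))}=|x|^{-d/p}$, so $\cR^{\la(p)}_{a,t}f$ lies in $L^{p,\infty}$ but in general not in $L^p$ (already for $w\equiv 1\in A_1$); there is no $L^p(w)$ majorant uniform in $t$, and convergence in $L^p(w)$ is simply false, so the ``dominated convergence in $L^p(w)$, a fortiori in $L^{p,\infty}(w)$'' step collapses. The convergence must be proved directly in the $L^{p,\infty}(w)$ quasinorm: split the multiplier into a smooth low-frequency part, which is an approximate identity with Schwartz kernel and hence converges in $L^p(w)$ for $A_1$ weights, and the part supported where $\rho\approx t$, which is handled for a dense class via the uniform $L^p(w)\to L^{p,\infty}(w)$ bounds; this is the paper's ``routine argument,'' and it uses the translation/dilation invariance of the $A_1$ and $\RH_\sigma$ characteristics to keep all constants uniform in $t$.
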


The case $p=1$ in Theorem \ref{thm:A1} is Vargas' result \cite{Vargas96}; our contribution here corresponds to $p>1$.

In order to prove Theorem \ref{thm:A1}, we establish new sparse domination results for Bochner-Riesz means at the critical index, which will be presented in \S\ref{subsec:sparse}. These can be combined with a result of Frey and Nieraeth \cite{FreyNieraeth} to yield that, under the assumptions of the Bochner--Riesz conjecture in $d$ dimensions, the operators $\cR_{a,t}^{\la(p)}$ map $L^p(w)$ to $L^{p,\infty} (w)$ for $w\in A_1\cap \RH_\sigma$ and $p<1+\frac{d-1}{d+1}(1-\frac 1\sigma)$. This holds unconditionally if $d=2$ or if $d\geq 3$ and $\sigma$ belongs to a suitable range that includes $[1,\frac{d+3}{2}]$: see Section \ref{sec:weighted}.
Theorem \ref{thm:A1} will be a consequence of this, using the standard fact that every $A_1$ weight belongs to $\RH_\sigma$ for some $\sigma>1$.

It does not seem to be known whether $p < 1+ \frac{d-1}{d+1}(1-\frac{1}{\sigma})$ is the sharp $p$-range
in terms of the reverse H\"older exponent $\sigma$ in the $L^p(w) \to L^{p,\infty}(w)$ estimates. It would be interesting to investigate relevant examples.

\subsection{Sparse bounds} \label{subsec:sparse}
Let $\fD$ denote a {\it dyadic lattice} in the sense of the monograph by Lerner and Nazarov \cite[\S 2]{lerner-nazarov}. For a locally integrable function $f$, a cube $Q\in \fD$ and $1 \leq p < \infty$, let
$\jp{f}_{Q,p}=(|Q|^{-1}\int_Q|f(y)|^p\ud y)^{1/p}$.
Given $0<\ga<1$, the collection $\fS\in \fD$ is called {\em $\gamma$-sparse} if for every $Q\in \fS$ there is a measurable subset $E_Q\subset Q$ so that $|E_Q|\geq \ga|Q|$ and $\{E_Q: Q\in \fS\}$ is a collection of pairwise disjoint sets. Let $1\le p,q<\infty$. For a $\gamma$-sparse family $\fS$ of cubes we define a sparse form $\La_{p,q}^\fS$ and a corresponding maximal form $\La^*_{p,q}$ by
\begin{align} \label{eq:Lambda} \La_{p,q}^\fS (f_1,f_2) &= \sum_{Q\in \fS}|Q| \jp{f_1}_{Q,p}
\jp{f_2}_{Q,q},
\\ \label{eq:maxLambda}
\La^*_{p,q} (f_1,f_2) &=\sup_{\fS:\ga\text{-sparse}}\La^\fS_{p,q} (f_1,f_2),
\end{align}
where the sup is taken over all $\gamma$-sparse families (which are allowed to be subcollections of different dyadic lattices).
These definitions are of interest in the range $p \leq q<p'$.
A linear operator $T: C^\infty_c(\bbR^d) \to \cD'(\bbR^d)$ satisfies a $(p,q)$ sparse bound if for all $f_1,\,f_2\in C^\infty_c$ the inequality
\Be\label{eq:sparse-dom} |\inn{Tf_1}{f_2}|\le C \La^*_{p,q}(f_1,f_2) \Ee holds with some constant $C$ independent of $f_1$, $f_2$. In this case, we say that $T$ belongs to the space $\text{Sp}_\ga(p,q;\bbR^d)$ and we denote by $\|T\|_{\Sp_\gamma(p,q;\bbR^d)}$ the best constant in \eqref{eq:sparse-dom}.
The space $\Sp_\gamma(p,q;\R^d)$ does not depend on $\ga$ (\cf. \cite{lerner-nazarov}), so we usually keep $\gamma$ fixed and drop the subscript $\gamma$. If the dimension is clear from the context we will also drop the mention of $\bbR^d$.

Given $0 < \lambda \leq \frac{d-1}{2}$, let $\trapez_d(\la)$ denote the trapezoid with corners
\Be\label{eq:trapez}\begin{aligned} &P_1=(\tfrac{2\la+d+1}{2d},\tfrac{d-2\la-1}{2d}), &&P_2=(\tfrac{2\la+d+1}{2d}, \tfrac {d-1}{2d} + \tfrac{\la(d+1)}{d(d-1)}),
\\ &P_3=(\tfrac {d-1}{2d} + \tfrac{\la(d+1)}{d(d-1)}, \tfrac{2\la+d+1}{2d}),
&&P_4= (\tfrac{d-2\la-1}{2d}, \tfrac{2\la+d+1}{2d})\,.
\end{aligned} \Ee
One might conjecture that
sparse bounds for $\cR_{a,t}^\lambda$ and $\lambda >0$ hold for all $(\frac 1p,\frac 1q)\in \trapez_d(\la)$. This would be a strengthening of the Lebesgue mapping properties of $\cR_{a,t}^\la$; thus, one typically aims to only obtain the sparse improvement for values of $\la>0$ for which the Bochner-Riesz conjectured has been verified. It was observed in \cite{benea-bernicot-luque, lacey-mena-reguera} that for $(\frac 1p,\frac 1q)$ in the interior of the trapezoid, $(p,q)$-sparse bounds for $\cR_{a,t}^\lambda$ can be obtained via a single-scale analysis, with affirmative results depending on the partial knowledge on the Bochner-Riesz conjecture. Henceforth we will focus on the endpoint cases in which $(\frac{1}{p},\frac{1}{q})$ belongs to the boundary of $\trapez_d(\la)$. Furthermore, since sparse bounds are scale-invariant we will consider the case $t=1$, and write $\cR^\la_a=\cR^\la_{a,1}$.

The sharpness of the region $\trapez_d(\la)$ was first observed in \cite{benea-bernicot-luque}, and can also be deduced from general necessary conditions for sparse domination (\cf. \cite[Prop.1.9]{BRS-endpoints}).
The numerology of \eqref{eq:trapez} at $P_2$ is related to the
conjectured $L^p\to L^r$ bounds for Fourier multiplier operators with radial bumps on thin annuli (see \eqref{eq:LpLrhj} below), which have as necessary condition $\frac{1}{r}\geq \frac{d+1}{d-1}(1-\frac{1}{p})$ from Knapp examples. Note that $P_2=(\frac 1{p_{2}}, \frac 1{q_{2}})$ in \eqref{eq:trapez} satisfies
$1-\frac{1}{q_{2}}=\frac{d+1}{d-1}(1-\frac{1}{p_{2}})$
and that the vertical line segment $P_1P_2$ corresponds to the critical case where $\la=\la(p)$.

Almost sharp results at the critical line $P_1P_2$ were obtained in the case $\la=\frac{d-1}{2} $ (that is, $p=1$) by Conde-Alonso--Culiuc--Di Plinio--Ou \cite{conde-alonso-etal}; namely they proved a $(1,q)$ sparse bound for all $q>1$. Partial results on the line $P_1P_2$ were obtained in two dimensions by Kesler and Lacey \cite{KeslerLacey} whenever $0 < \lambda < 1/2$. At the critical $p_\la=\frac{4}{3+2\la}$, they showed a $\Sp(p_\la,q;\bbR^2)$ bound for $q>4$, thereby strengthening the weak type $(p_\la, p_\la)$ inequality in \cite{seeger-BRwt}. They posed as an open question
whether $\Sp(p_\la,q;\bbR^2)$ bounds hold in the range $\frac{4}{1+6\la}< q\le 4$.
Here we answer this question affirmatively, so that by duality we obtain a positive result
for the full interior of the sides $(P_1P_2)$ and $(P_3P_4)$ in $\trapez_2(\la)$. It remains open what happens on the top side $\overline {P_2P_3}$ of $\trapez_2(\la)$, except for the special case $\la_*=1/6$ covered in Theorem \ref{thm:STendpt} below.

\color{black}
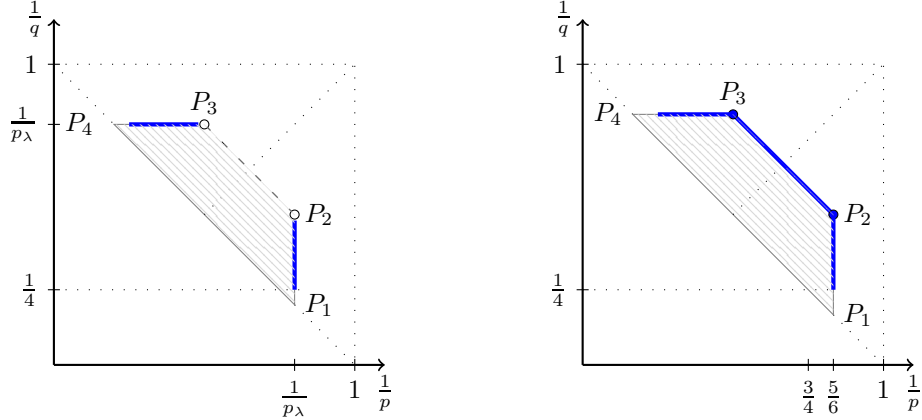
\begin{figure}

\begin{tikzpicture}[scale=2]

\begin{scope}[scale=2]

\draw[thick,->] (0,0) -- (1.1,0) node[below] {\small{$ \frac 1 p$}};
\draw[thick,->] (0,0) -- (0,1.15) node[left] {\small{$ \frac{1}{q}$}};

\draw[loosely dotted] (0,1) -- (1.,1.) -- (1.,0);
\draw[loosely dotted] (0,1) -- (1/5,4/5);
\draw[loosely dotted] (4/5,1/5) -- (1,0);
\draw[gray] (1/4, 4/5) -- (1/5,4/5) -- (0.5,0.5) -- (4/5,1/5) -- (4/5,1/4);

\draw (1,.02) -- (1,-.02) node[below] {\small{$1$}};
\draw (.02,1) -- (-.02,1) node[left] {\small{$1$}};

\draw (4/5,.02) -- (4/5, -.02) node[below] {\small{$\tfrac{1}{p_\lambda}$}} ;
\draw (.02,4/5) -- (-.02,4/5) node[left] {\small{$\tfrac{1}{p_\lambda}$}} ;

\draw[loosely dotted] (0.5,0.5) -- (1,1);

\draw[loosely dashed] (4/5,1/2) -- (1/2,4/5);
\draw[black,fill=white] (4/5,1/2) circle (.15mm);
\draw[black,fill=white] (1/2, 4/5) circle (.15mm);
\draw[ultra thick,color=blue] (4/5,1/4) -- (4/5,1/2-0.02);
\draw[ultra thick,color=blue] (1/2-0.02,4/5) -- (1/4,4/5);

\draw (.02,1/4) -- (-.02,1/4) node[left] {\small{$ \tfrac 14$}};
\draw[loosely dotted] (0,1/4) -- (5/6,1/4);

\draw (4/5,1/5) node[right] {\small{$P_1$}} ;
\draw (4/5,1/2) node[right] {\small{$P_2$}} ;
\draw (1/2,4/5) node[above] {\small{$P_3$}} ;
\draw (1/6,4/5) node[left] {\small{$P_4$}} ;

\fill[pattern=north west lines, pattern color= gray!30] (4/5,1/5) -- (4/5,1/2) -- (1/2, 4/5) -- (1/5,4/5) -- (4/5,1/5);

\begin{scope}[xshift=50]
\draw[thick,->] (0,0) -- (1.1,0) node[below] {\small{$ \frac 1 p$}};
\draw[thick,->] (0,0) -- (0,1.15) node[left] {\small{$ \frac{1}{q}$}};

\draw[loosely dotted] (0,1) -- (1.,1.) -- (1.,0);
\draw[loosely dotted] (0,1) -- (1/6,5/6);
\draw[loosely dotted] (5/6,1/6) -- (1,0);
\draw[gray] (1/6,5/6) -- (0.5,0.5) -- (5/6,1/6);

\draw (1,.02) -- (1,-.02) node[below] {\small{$1$}};
\draw (.02,1) -- (-.02,1) node[left] {\small{$ 1$}};

\draw (.02,1/4) -- (-.02,1/4) node[left] {\small{$ \tfrac 14$}};
\draw[loosely dotted] (0,1/4) -- (5/6,1/4);

\draw (3/4,.02) -- (3/4,-.02) node[below] {\small{$ \tfrac 34$}};

\draw (5/6,.02) -- (5/6, -.02) node[below] {\small{$\tfrac{5}{6}$}} ;

\draw[loosely dotted] (0.5,0.5) -- (1,1);

\draw[gray] (5/6,1/6) -- (5/6,1/2);
\draw[gray] (1/6,5/6) -- (1/2,5/6);
\draw[ultra thick, color=blue] (5/6,1/2) -- (1/2,5/6);
\draw[black,fill=blue] (5/6,1/2) circle (.15mm);
\draw[black,fill=blue] (1/2, 5/6) circle (.15mm);
\draw[ultra thick, color=blue] (5/6,1/4) -- (5/6,1/2);
\draw[ultra thick, color=blue] (1/2,5/6) -- (1/4,5/6);

\draw (5/6,1/6) node[right] {\small{$P_1$}} ;
\draw (5/6,1/2) node[right] {\small{$P_2$}} ;
\draw (1/2,5/6) node[above] {\small{$P_3$}} ;
\draw (1/6,5/6) node[left] {\small{$P_4$}} ;

\fill[pattern=north west lines, pattern color= gray!30] (5/6,1/6) -- (5/6,1/2) -- (1/2, 5/6) -- (1/6,5/6) -- (5/6,1/6);

\end{scope}

\end{scope}

\end{tikzpicture}

\caption{Sparse bounds for Riesz means $\cR^\lambda_{a,t}$ in $\R^2$ for any $0 < \lambda < 1/2$ on the left, and for the special case $\la=1/6$ on the right. The blue boundary segments correspond to the new content of Theorems \ref{thm:new-sparse-bound-2D} and \ref{thm:STendpt}, resp. Similar figures hold for $d \geq 3$ for a restricted range of $\lambda$; see Remarks after Theorem \ref{thm:blackboxvariant}, and Theorem \ref{thm:STendpt}.}
\label{fig:exponents}
\end{figure}

\begin{thm}\label{thm:new-sparse-bound-2D} Let $d=2$, $a>0$.
For $0<\la<1/2$, let $p_\la=\frac 4{3+2\la}$. Then we have
\[ \| \cR^\la_{a}\|_{\Sp (p_\la , q; \R^2)} <\infty, \quad \text{ for }
q>\tfrac{4}{1+6\la} .\]
\end{thm}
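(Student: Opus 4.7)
The plan is to combine a dyadic Littlewood--Paley decomposition of $\cR^\lambda_a$ adapted to the critical surface $\partial\Omega$, sharp single-scale $L^p\to L^r$ estimates for thin annular Fourier multipliers in $\R^2$ (available by Carleson--Sj\"olin), and a general machinery that converts such single-scale estimates into sparse bounds, which is presumably the content of Theorem \ref{thm:blackboxvariant} referenced in the introduction.

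I would first decompose the Riesz multiplier as
\[ (1-\rho(\xi)^a)_+^\lambda = \sum_{j\ge 0} 2^{-j\lambda}\psi_j(\xi), \]
where $\psi_j$ is a smooth bump of amplitude $O(1)$ supported on the annulus $\{|1-\rho(\xi)^a|\sim 2^{-j}\}$, and denote the corresponding Fourier multipliers by $T_j=2^{-j\lambda}S_j$, with $S_j$ the normalised annulus multiplier. The kernel of $S_j$ is, by stationary phase on the curved surface $\partial\Omega$, essentially concentrated on the ball of radius $2^j$ with oscillatory structure inside and Schwartz decay outside.

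The single-scale input I need is: in $d=2$, for every pair $(1/p,1/r)$ in the closed Bochner--Riesz pentagon, the Carleson--Sj\"olin theorem yields the sharp bound $\|S_j\|_{L^p\to L^r}\lc 2^{j\beta(p,r)}$ with $\beta(p,r)$ determined by scaling and the Knapp condition. Taking $(p,r)=(p_\lambda,q')$ with $q>\tfrac{4}{1+6\lambda}$, the point $(1/p_\lambda,1/q')$ lies strictly below the Knapp line $1/r=3/p'$, so combining with the amplitude factor $2^{-j\lambda}$ gives
\[ \|T_j\|_{L^{p_\lambda}\to L^{q'}} \lc 2^{-j\varepsilon(q)} \]
for some $\varepsilon(q)>0$. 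The endpoint value $q=\tfrac{4}{1+6\lambda}$ corresponds to $\varepsilon=0$, which is why only the open range appears in the statement, and why the endpoint at $P_2$ remains outside the reach of this method.

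Finally, I would feed these single-scale estimates with summable decay in $j$, together with the standard Schwartz-type spatial localisation of the kernels of $T_j$, into the black-box Theorem \ref{thm:blackboxvariant} to promote them to a $(p_\lambda, q)$ sparse bound for $\cR^\lambda_a$. The main obstacle is not the exponent algebra but the spatial bookkeeping underlying the black-box: because the kernel of $T_j$ is only Schwartz-localised at scale $2^j$ rather than compactly supported, a stopping-time/sparse decomposition must separate, for each boss cube $Q$, the short-range contributions with $2^j\le\ell(Q)$ (handled by the single-scale $L^{p_\lambda}\to L^{q'}$ estimate applied on a dilate of $Q$) from the long-range contributions with $2^j>\ell(Q)$ (handled by rapidly decaying Schwartz tails and iteration into the sparse structure). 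This architecture is precisely what the black-box abstracts away, so that the full open range $q>\tfrac{4}{1+6\lambda}$ in $\R^2$ is reached as soon as Carleson--Sj\"olin supplies the single-scale bound all the way to the Knapp line -- in sharp contrast with dimensions $d\ge 3$, where only partial ranges are unconditional.
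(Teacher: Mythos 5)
There is a genuine gap, and it sits exactly at the step you describe as mere ``spatial bookkeeping''. Your mechanism is: single-scale decay $\|T_j\|_{L^{p_\la}\to L^{q'}}\lc 2^{-j\varepsilon(q)}$ for $q>\frac{4}{1+6\la}$, then a cube decomposition at scale $2^j$ and summation in $j$. But when you localize the scale-$2^j$ piece to cubes $Q$ of side $2^j$ and compare with the sparse form, each cube contributes
\[
|\inn{T_j(f_1\bbone_Q)}{f_2\bbone_{3Q}}|\ \lc\ \|T_j\|_{L^{p_\la}\to L^{q'}}\,|Q|^{\frac1{p_\la}+\frac1q}\jp{f_1}_{Q,p_\la}\jp{f_2}_{3Q,q}
 = \|T_j\|_{L^{p_\la}\to L^{q'}}\,2^{jd(\frac1{p_\la}+\frac1q-1)}\cdot |Q|\jp{f_1}_{Q,p_\la}\jp{f_2}_{3Q,q},
\]
so the per-scale sparse constant is $\|T_j\|_{L^{p_\la}\to L^{q'}}2^{jd(\frac1{p_\la}+\frac1q-1)}$. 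With $\|T_j\|_{L^{p_\la}\to L^{q'}}\approx 2^{-j(\la-\la(q'))}$ and $\la=\la(p_\la)$ one has $\la(p_\la)-\la(q')=d(\frac1{p_\la}-\frac1{q'})=d(\frac1{p_\la}+\frac1q-1)$, i.e.\ the operator-norm decay is \emph{exactly} cancelled by the normalizing factor, for every $q$. Thus on the critical segment $P_1P_2$ the per-scale contributions are $O(1)$ and the sum over $j$ diverges; your $\varepsilon(q)>0$ buys nothing, and your explanation of why the range is open ($\varepsilon=0$ at $P_2$) is a misdiagnosis. This exact cancellation is why the single-scale analysis of Benea--Bernicot--Luque and Lacey--Mena--Reguera only covers the interior of the trapezoid, and why the segment $P_1P_2$ is the endpoint problem this paper is about. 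Using larger cubes for the small scales does not help either: applying the global $L^{p_\la}\to L^{q'}$ norm on a cube of side $2^k\ge 2^j$ produces the worse factor $2^{kd(\frac1{p_\la}+\frac1q-1)}$.

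Relatedly, Theorem \ref{thm:blackboxvariant} is not a black box that converts single-scale estimates with summable decay plus Schwartz tails into sparse bounds. Its hypothesis is a global non-endpoint Bochner--Riesz bound $\cR^\la_1:L^{p_\circ}\to L^{r_\circ}$ for all $\la>\la(r_\circ)$, and its proof is the bulk of the paper: an $\varepsilon$-removal argument in the spirit of Tao (Theorem \ref{thm:Taovariant}) producing the vector-valued multi-scale inequalities $\VV(p,r)$, fed into the genuinely multi-scale sparse machinery of Theorem \ref{thm:blackbox-sparse} (the new decomposition $h_\la=\sum_\ell h_{\la,\ell}$ with the moment cancellation \eqref{eq:la-canc}, Calder\'on--Zygmund decompositions at every boss cube, the $\Xi_{s,\fQ}$ estimates combining Christ--Sogge $L^2$ arguments with $L^1$ arguments \`a la Conde-Alonso et al., and an induction on scales). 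Had you simply verified the hypothesis of Theorem \ref{thm:blackboxvariant} in $d=2$ via Carleson--Sj\"olin and quoted that theorem, you would be reproducing the paper's one-line derivation of Theorem \ref{thm:new-sparse-bound-2D}; as written, however, your proposal replaces the actual content of that theorem with a mechanism that provably cannot close at the critical index.
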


In higher dimensions, we obtain similar optimal results but only for a partial range of $\la$ away from $0$.
This is natural in view of the currently incomplete knowledge on
$L^p\to L^r$ bounds for Bochner-Riesz type operators. It will be convenient to formulate the sparse bounds conditional on off-diagonal Lebesgue space estimates for the Bochner-Riesz operators $\cR^\la_1$ (and unconditional for the Stein--Tomas exponent and some range beyond).

\begin{thm} \label{thm:blackboxvariant}
Let $d \geq 2$, $a>0$ and $\frac{2(d+1)}{d+3} \leq p_\circ < \frac{2d}{d+1} $.
Assume that for all $r_\circ\in [p_\circ, \frac{d-1}{d+1} p_\circ')$
the operator $\cR^\la_1$ maps $L^{p_\circ}(\bbR^d) \to L^{r_\circ} (\bbR^d) $ for all $\la>\la(r_\circ)$.
Then $\cR^{\la(p)}_a\in \Sp(p,q)$ for $1 \leq p<p_\circ$ and $q > q_{\mathit{opt}}:=\frac{(d-1)p}{d+1-2p}$.
\end{thm}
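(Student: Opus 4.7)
The plan is to derive the sparse bound by extracting single-scale operator-norm estimates for the dyadic frequency pieces of $\cR^{\la(p)}_a$ from the hypothesis, and then invoking a general sparse-domination principle tailored to frequency-localized operators with spatially concentrated kernels.

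First, I would perform the standard dyadic decomposition
\[
\cR^{\la(p)}_a = \sum_{j\ge 0} T_j,
\]
where $T_j$ is a Fourier multiplier operator with symbol $m_j$ supported on the $2^{-j}$-thick annulus $\{\xi : |1 - \rho(\xi)^a|\sim 2^{-j}\}$, satisfying $\|m_j\|_\infty \lesssim 2^{-j\la(p)}$ with the natural symbol estimates. Its convolution kernel $K_j$ is concentrated (modulo Schwartz tails) in $\{|x| \lesssim 2^j\}$. A standard Littlewood--Paley / randomization / $\eps$-removal argument (cf.\ \cite{Tao-Indiana1998}) converts the hypothesis into the single-annulus bound
\[
\|T_j\|_{L^{p_\circ} \to L^{r_\circ}} \lesssim_\eps 2^{j\eps}\, 2^{-jd(1/p - 1/r_\circ)}, \qquad r_\circ \in [p_\circ,\tfrac{d-1}{d+1}p_\circ'),\ \eps > 0,
\]
obtained by combining $\la(p) - \la(r_\circ) = d(1/p - 1/r_\circ)$ with the unit-amplitude--annulus operator norm $\lesssim 2^{j(\la(r_\circ)+\eps)}$ implied by the hypothesis. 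Since $p < p_\circ \le r_\circ$, the $j$-exponent $-d(1/p - 1/r_\circ)$ is strictly negative and provides genuine decay to absorb the $2^{j\eps}$ loss. Interpolation against the trivial endpoints $\|T_j\|_{L^2\to L^2}\lesssim 2^{-j\la(p)}$ and $\|T_j\|_{L^1\to L^\infty}\lesssim 2^{-j(\la(p)+1)}$ extends this decay to the off-diagonal configurations $(P,R)$ needed in the next step.

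Next, I would invoke a general sparse-domination principle in the spirit of \cite{conde-alonso-etal,KeslerLacey,BRS-endpoints}, which upgrades such single-scale $L^P \to L^R$ decay for spatially localized frequency pieces into a sparse form bound at $(P, R')$. Concretely, in a Calder\'on--Zygmund-type stopping-time construction on the $L^p$-averages of $f_1$ and $L^q$-averages of $f_2$, each dyadic cube $Q$ of side $2^k$ in the resulting sparse collection is handled as follows. The local contribution from pieces $T_j$ with $j \le k$ applied to $f_1\bbone_{cQ}$ and paired against $f_2 \bbone_Q$ is summed geometrically using the single-scale estimate above, yielding a term $\lesssim |Q|\, \langle f_1\rangle_{Q,p}\, \langle f_2\rangle_{Q,q}$. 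The tail contribution from pieces $T_j$ with $j > k$ or from input outside a fixed dilate of $Q$ is controlled by the Schwartz-tail decay of $K_j$ and dominated by a Hardy--Littlewood-type maximal term, which is absorbed by the stopping construction.

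The main obstacle is the endpoint matching at the Knapp line. Both the hypothesis ($r_\circ < \tfrac{d-1}{d+1}p_\circ'$) and the conclusion ($q > q_{\mathit{opt}}$) are open conditions, and the identity $1 - 1/q_{\mathit{opt}} = \tfrac{d+1}{d-1}(1 - 1/p)$ shows that the two thresholds coincide in the limit. Covering the full range $q > q_{\mathit{opt}}$ therefore requires letting $r_\circ$ approach $\tfrac{d-1}{d+1}p_\circ'$ and $p_\circ$ approach $p$ from above, while carefully balancing the $2^{j\eps}$ losses against the $2^{-jd(1/p - 1/r_\circ)}$ decay so that the summation over $j$ converges. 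A positive amount of decay survives precisely when $q$ is strictly above $q_{\mathit{opt}}$, which dictates the range stated in Theorem \ref{thm:blackboxvariant}.
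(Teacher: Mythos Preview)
Your proposal has a genuine gap: the ``general sparse-domination principle'' you invoke does not give the conclusion at the critical index $\la=\la(p)$.

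Concretely, the single-scale principles in \cite{conde-alonso-etal,lacey-mena-reguera,BRS-endpoints} deliver a $(p,q)$ sparse bound for $\sum_j T_j$ provided the \emph{scale-normalized} constants
\[
A_j \;:=\; \sup_{L(Q)=j}\ \frac{\|T_j(f_1\bbone_{cQ})\|_{L^{q'}}}{|Q|^{1/q'}\jp{f_1}_{Q,p}}
\;\le\; \|T_j\|_{L^p\to L^{q'}}\, 2^{jd(1/p-1/q')}
\]
are \emph{summable} in $j$. At $\la=\la(p)$ the optimal bound is $\|T_j\|_{L^p\to L^{q'}}\approx 2^{-jd(1/p-1/q')}$, so $A_j\approx 1$ for every $j$ and the sum diverges. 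Your claim that ``the local contribution from pieces $T_j$ with $j\le k$ \dots\ is summed geometrically \dots\ yielding a term $\lc |Q|\jp{f_1}_{Q,p}\jp{f_2}_{Q,q}$'' is therefore incorrect: the $j$-sum is $O(1)$ only after extracting a factor $|Q|^{1/p+1/q}$, and since $q<p'$ this leaves an unbounded factor $|Q|^{1/p+1/q-1}$. Equivalently, the single-scale decay you obtain at $(p_\circ,r_\circ)$ yields a sparse bound at $(p_\circ,r_\circ')$---a point in the \emph{interior} of $\trapez_d(\la(p))$ since $\la(p)>\la(p_\circ)$---but not at any $(p,q)$ with $p<p_\circ$ on the critical segment $P_1P_2$. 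This is exactly why the paper states that the single-scale analysis of \cite{benea-bernicot-luque,lacey-mena-reguera} covers only the interior.

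The paper's route is genuinely different and supplies the missing mechanism. First, it replaces the naive dyadic pieces by the refined decomposition $h_{\la(p)}=\sum_\ell h_{\la(p),\ell}$ of \S\ref{sec:decomp}, whose cancellation conditions (Corollary~\ref{cor:cancellations}) force the kernels to be essentially supported in $\{|x|\approx 2^\ell\}$ with the sharp $L^\infty$ bound of Lemma~\ref{lem:main-kernelest}. Second, these kernel bounds feed into a \emph{multi-scale} estimate (Theorem~\ref{thm:Xi}, Proposition~\ref{CZ-prop}): when $T_\ell$ acts on Calder\'on--Zygmund bad functions at cube scale $2^{\ell-s}$, one gains $2^{-\eps s}$. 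This gain---absent from any single-scale bound---comes from interpolating the $\VV(p,r)$ input (Proposition~\ref{prop:p=p0}) against an $L^1$-based estimate (Proposition~\ref{prop:p=1}) that crucially uses the refined kernel localization. The Bochner--Riesz hypothesis enters through Theorem~\ref{thm:Taovariant} to verify $\VV(p,r)$, not merely to produce single-scale operator norms. Without the refined decomposition and the resulting $2^{-\eps s}$ gain, the iteration in Proposition~\ref{prop:iteration} does not close.
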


Several remarks are in order.

\begin{rem} \label{rem:listofremarks}

(i) The condition $q > q_{\mathit{opt}}$ is equivalent to saying that for the value $\lambda=\lambda(p)$, sparse bounds hold on the critical vertical line segment $P_1P_2$, except at the point $P_2$.

(ii) Theorem \ref{thm:new-sparse-bound-2D} is an immediate corollary of Theorem \ref{thm:blackboxvariant} due to the resolution of the Bochner--Riesz conjecture in 2 dimensions.

(iii) For $d \geq 3$, we are seeking to show that
\Be \label{eq:plaqla} \cR_a^\la\in \Sp(p_\lambda,q),\quad p_\la=\tfrac{2d}{d+1+2\la}, \quad q >\tfrac{2(d-1)d}{2\la(d+1)+(d-1)^2},\Ee
in a large range of $\lambda$. Since $\la(p_\la)=\la$ this corresponds to bounds on the critical endpoint segment $P_1P_2$, and the range of $q$ is optimal up to $P_2$. By Theorem \ref{thm:blackboxvariant} this can be achieved if we have a non-endpoint Bochner-Riesz $L^{p_\circ}\to L^{r_\circ}$ bound for some $p_\circ > p_\lambda$ and all $r_\circ \in [p_\circ, \frac{d-1}{d+1}p_\circ')$.
Instances for which this Bochner-Riesz hypothesis is known (and therefore our theorem is unconditional) are:
\begin{itemize}
\item The Stein--Tomas \cite{fefferman69} exponent $p_\circ=\frac{2(d+1)}{d+3}$. This leads to \eqref{eq:plaqla} for $\frac{d-1}{2(d+1)}<\la<\frac{d-1}{2}.$
\item The so-called bilinear Fourier restriction exponent, that is, for $p_\circ < \frac{2(d+2)}{d+4}$, proven in \cite{ChoKimLeeShim2005} for $\rho(\xi)=|\xi|$. This leads to \eqref{eq:plaqla} for $\frac{d-2}{2(d+2)}<\la<\frac{d-1}{2}$.
\item The exponents obtained through multilinear restriction:
$p_\circ < \frac{2(d^2+3d-2)}{d^2+5d-2}$ for even $d \geq 4$, and $p_\circ < \frac{2(d^2+4d-1)}{d^2+6d+1}$ for odd $d \geq 5$,
proven in \cite{Kwon-Lee} via the oscillatory integral estimates in \cite{GHI}; these exponents correspond to the dual exponents to $q_\circ$ in \cite[(1.15)]{Kwon-Lee}. This extends
\eqref{eq:plaqla} to a range of $\la$'s smaller than $\frac{d-2}{2(d+2)}.$
\end{itemize}

(iv) Key to Theorem \ref{thm:blackboxvariant} is Theorem \ref{thm:blackbox-sparse}, which replaces the non-endpoint Bochner--Riesz boundedness assumption by an endpoint variant for certain vector-valued functions, labelled $\VV(p,r)$ in Definition \ref{def:VV}. For further details see \S\ref{sec:hypotheses}.

(v) Theorem \ref{thm:blackboxvariant} follows from a more general result that only imposes the $L^{p_\circ} \to L^{r_\circ} $ non-endpoint inequalities for the Bochner-Riesz operator in Theorem \ref{thm:blackboxvariant} for a specific $r_\circ$ (instead of the almost optimal range of $r_\circ$). Such a theorem is formulated as Theorem \ref{thm:blackboxsparse-r0} below.

\end{rem}

In Theorems \ref{thm:new-sparse-bound-2D} and \ref{thm:blackboxvariant} it remains open whether the $\Sp(p_\la,q_{opt,\la})$ bound holds with
$q_{\mathit{opt},\la}:=\tfrac{2(d-1)d}{2\la(d+1)+(d-1)^2}$, that is, at the endpoint $P_2$.
We can prove this when the Bochner-Riesz index is equal to $\la_*=\frac{d-1}{2(d+1)} $; in this instance $q_{\mathit{opt},\la}=2$. This corresponds to the endpoint in the Stein-Tomas restriction theorem and gives us added flexibility to use $L^2$ methods. We also obtain the corresponding sparse bounds on the full top side $\overline{P_2P_3}$, thereby proving the optimal sparse bounds in the closed trapezoid $\trapez_d(\lambda_*)$, for this special case.
\begin{thm} \label{thm:STendpt}
Let $d\ge 2$, $a>0$. Let $\la_* =\frac{d-1}{2(d+1)}$ and $(\frac 1p,\frac 1q)\in \trapez_d(\la_*)$. Then $\cR^{\la_*}_a\in \Sp(p,q;\bbR^d)$.

\end{thm}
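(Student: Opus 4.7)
The plan is to establish the sparse bound at the single vertex $P_2 = (1/p_\circ, 1/2)$, where $p_\circ = \tfrac{2(d+1)}{d+3}$ is the Stein--Tomas exponent; the sparse bounds at every other point of $\trapez_d(\lambda_*)$ will then follow from general structural properties of the sparse forms.

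Three such structural properties come into play. First, \emph{monotonicity}: since $\langle \cdot \rangle_{Q,p} \le \langle \cdot \rangle_{Q,p'}$ for $p \le p'$, one has the inclusion $\Sp(p,q) \subseteq \Sp(p',q')$ whenever $p \le p'$ and $q \le q'$. Second, \emph{duality}: the identity $\La^{\fS}_{p,q}(f_1,f_2) = \La^{\fS}_{q,p}(f_2,f_1)$, together with the (essential) self-adjointness of $\cR^{\lambda_*}_a$ (its multiplier is real), yields $\Sp(p,q) \Leftrightarrow \Sp(q,p)$. Third, \emph{bilinear real interpolation} of sparse bounds. From $\Sp(p_\circ, 2)$ at $P_2$ one therefore obtains $\Sp(2, p_\circ)$ at $P_3$ by duality; bilinear interpolation between these produces sparse bounds on the entire diagonal edge $\overline{P_2 P_3}$; and monotonicity extends these bounds to every $(1/p, 1/q)$ satisfying $1/p + 1/q \le 1/p_\circ + 1/2$, $1/p \le 1/p_\circ$, and $1/q \le 1/p_\circ$, which is precisely the closed trapezoid $\trapez_d(\lambda_*)$.

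To prove $\cR_a^{\lambda_*} \in \Sp(p_\circ, 2)$, we decompose the multiplier dyadically in the distance to $\partial \Omega$:
\[
\bigl(1 - \rho(\xi)^a\bigr)_+^{\lambda_*} = \sum_{j \ge 0} m_j(\xi),
\]
with each $m_j$ smooth, supported in an annular region $\{|1 - \rho(\xi)^a| \sim 2^{-j}\}$, and of amplitude $\|m_j\|_\infty \lesssim 2^{-j\lambda_*}$. Let $R_j$ be the associated Fourier multiplier operator. The Stein--Tomas restriction theorem applied to the smooth curved hypersurface $\partial \Omega$, together with a Plancherel slicing of the $2^{-j}$-thick tube around $\partial \Omega$, yields the single-scale bound
\[
\|R_j f\|_{L^2(\R^d)} \lesssim 2^{-j\lambda_*} \cdot 2^{-j/2} \|f\|_{L^{p_\circ}(\R^d)} = 2^{-jd/(d+1)} \|f\|_{L^{p_\circ}(\R^d)},
\]
which is summable in $j$. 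Combined with the standard spatial concentration of the kernel $\widehat{m_j}$ at scale $2^j$ (obtained via stationary phase on the curved surface $\partial \Omega$), this verifies the vector-valued hypothesis $\VV(p_\circ, 2)$ of Definition~\ref{def:VV} with summable constants, and Theorem~\ref{thm:blackbox-sparse} then delivers $\cR_a^{\lambda_*} \in \Sp(p_\circ, 2)$.

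The main obstacle is the $L^2$-endpoint at $q = 2$. For general critical $\lambda$, sparse-from-single-scale frameworks close only with a strict gap $r > 2$, yielding the sparse bounds of Theorem~\ref{thm:blackboxvariant} but missing the top vertex on the critical edge. The special feature of $\lambda_*$ is that the Stein--Tomas estimate provides exactly the amount of decay $d/(d+1) > 0$ needed to sum the annular contributions at the $L^2$ endpoint, and Plancherel orthogonality removes any $\varepsilon$- or logarithmic losses that normally obstruct sparse domination at $q = 2$.
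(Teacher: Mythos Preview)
Your argument has a genuine gap at its central step. You invoke Theorem~\ref{thm:blackbox-sparse} to conclude $\cR_a^{\lambda_*} \in \Sp(p_\circ, 2)$, but that theorem only yields $\Sp(p,q)$ for $p$ \emph{strictly} less than $p_\circ$ and $q$ \emph{strictly} greater than $q_*(p,p_\circ,r_\circ)$. The endpoint $(p,q)=(p_\circ,2)$ is explicitly excluded; indeed, the paper notes right after the statement of Theorem~\ref{thm:blackbox-sparse} that this endpoint case is exactly Theorem~\ref{thm:STendpt} and requires a separate proof. The obstruction is structural: the proof of Theorem~\ref{thm:blackbox-sparse} passes through Theorem~\ref{thm:Xi} and Proposition~\ref{CZ-prop}, which manufacture an $\varepsilon$-gain $2^{-\varepsilon s}$ by interpolating the $\VV(p,r)$ input against the $p=1$ estimate of Proposition~\ref{prop:p=1}. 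At the Stein--Tomas exponent $p=p_\circ$ there is no room to interpolate, so $\varepsilon=0$ and the iteration does not close. Knowing $\VV(p_\circ,2)$ (which is indeed just Lemma~\ref{lem:ST-style}) is not the issue; the issue is that the black-box machine cannot consume it at the endpoint.

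The paper therefore runs a modified sparse recursion in \S\ref{sec:sparse-partII} that exploits $L^2$ orthogonality \emph{inside} the iteration rather than as an input to Theorem~\ref{thm:blackbox-sparse}. The key new ingredients are Lemma~\ref{lem:endpoint-ST-summation}, which gives exact $\ell^2$-summability over scales with no loss, and, for the interior of the edge $\overline{P_2P_3}$, the bilinear Proposition~\ref{prop:bilinear-var} combined with a Calder\'on--Zygmund decomposition of \emph{both} $f_1$ and $f_2$. Your alternative route to the edge via ``bilinear real interpolation of sparse bounds'' is also unjustified: the maximal form $\Lambda^*_{p,q}$ involves a supremum over sparse families that may depend on $f_1,f_2$, and no general interpolation theorem for the classes $\Sp(p,q)$ is available or cited. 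The paper sidesteps this by proving all points on $\overline{P_2P_3}$ directly.
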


The main novelty of this paper is the introduction of a refined decomposition of the Riesz means $\cR_{a,t}^\lambda$ which has improved kernel localization properties in the spirit of Christ \cite{Christ-rough} but still retains good Fourier support properties. This allows to combine the two existing sparse endpoint approaches for $\cR_{a,t}^\lambda$, that is, the $p=1$ result of \cite{conde-alonso-etal}, and the partial two-dimensional result for $p>1$ of \cite{KeslerLacey}.
When $q=2$ one can further exploit the Fourier orthogonality properties of the decomposition to obtain Theorem \ref{thm:STendpt}.

\subsection*{Notation} \label{sec:notation} {We list some frequently used notation.}
\begin{itemize}
\item[$\circ$] {\it Families of dyadic cubes.} We let $\fD$ be a dyadic lattice of cubes in the sense of Lerner and Nazarov \cite{lerner-nazarov}.
We use $ \fQ$ for general subcollections of $\fD$.
We use the notation $\fW$ if such a subcollection is obtained by a Whitney decomposition of an open set with certain quantitative properties. We use $\fS$ for sparse families of dyadic cubes.
The sidelength of a dyadic cube $Q$ is denoted by $2^{L(Q)}$ with $L(Q)\in \bbZ$.
For a collection $\fQ$ of cubes we denote by $\fQ_j$ the collection of cubes in $\fQ$ with sidelength $2^j$. Similarly $\fQ_{\ge j}$ denotes the cubes $Q\in\fQ$ with $L(Q)\ge 2^j$. Analogously we define $\fQ_{\le j}, \fQ_{>j}, \fQ_{<j}$.
\item[$\circ$] {\it Normalized bump functions.} For $M\ge 1$ let $\cY_M$ be the class of all $C^{M}$ functions $\chi$ supported on
$(\frac 12,2)$ such that $\|\chi\|_{C^{M}}:=\sum_{\nu=0}^M\| \chi^{(\nu)} \|_\infty \le 1$.

\item[$\circ$] {\it Riesz multipliers.} We write $h_\la(\varrho) =\chi(\varrho)(1-\varrho)_+^\la$
with $\chi\in C^\infty_c((1/2,2)) $ and $\chi(\varrho)=1$ near $\varrho=1$ (see \eqref{initial-dec} below). The decomposition $h_\la=\sum_{\ell=0}^\infty h_{\la,\ell}$ is defined in \eqref{eq:hlaells}.

\end{itemize}

\subsection*{Outline of the paper} In \S\ref{sec:hypotheses}
we formulate refined versions of Theorem \ref{thm:blackboxvariant} involving Bochner--Riesz type inequalities for certain vector-valued functions and discuss how Theorems \ref{thm:new-sparse-bound-2D} and \ref{thm:blackboxvariant} follow from them.
In \S\ref{sec:decomp} we introduce a crucial decomposition of the Riesz multipliers.
In \S\ref{sec:Mainestimates} we shall state the main technical estimates
used in the sparse domination argument, with a key result (Theorem \ref{thm:Xi}) proved in
\S\ref{sec:ProofofMainProp}. Theorem \ref{thm:blackbox-sparse}, which is the main black-box sparse domination result, is proved in \S\ref{sec:sparse-partI}. The endpoint sparse domination results for the Riesz means at the index $\la=\frac{d-1}{2d+2}$ (Theorem \ref{thm:STendpt}) are treated in \S\ref{sec:auxiliary-for Thm1.3} and \S\ref{sec:sparse-partII}. Some consequences for weak type inequalities with weights, including the proof of Theorem \ref{thm:A1}, are discussed in \S\ref{sec:weighted}.
\subsection*{Acknowledgements} This research was supported through the program {Oberwolfach Research Fellows} by Mathematisches Forschungsinstitut Oberwolfach in 2023. The authors were supported in part by National Science Foundation grants
DMS-1954479 (D.B.), DMS-2154835 (J.R.), DMS-2054220 (A.S.),
and by the AEI grants RYC2020-029151-I and PID2022-140977NA-I00 (D.B.).
\section{A refined version of Theorem \ref{thm:blackboxvariant} and Bochner-Riesz type bounds for vector-valued functions}\label{sec:hypotheses}
We next formulate a more refined version of Theorem \ref{thm:blackboxvariant} which only involves a Bochner-Riesz non-endpoint $L^{p_\circ}\to L^{r_\circ}$ assumption for a specific value of $r_\circ$, as opposed to all values of $r_\circ \in [p_\circ, \frac{d-1}{d+1} p_\circ')$.

Let $d \geq 2$, $\frac{2(d+1)}{d+3}\le p_\circ<\frac{2d}{d+1}$, and $p_\circ\leq r_\circ \leq \frac{d-1}{d+1} p_\circ'.$
Define the exponent $r_*(p,p_\circ, r_\circ)$ and its dual $q_*(p,p_\circ, r_\circ) $ by
\Be \label{eq:r*def} 1-\frac{1}{q_*(p,p_\circ,r_\circ)}=\frac{1}{r_*(p,p_\circ,r_\circ)} :=
\begin{cases}
\frac{ \tfrac{1}{r_0} (\tfrac{d+3}{2(d+1)} -\tfrac 1p) +\frac 12 (\tfrac 1p-\tfrac 1{p_\circ}) } {\tfrac{d+3}{2(d+1)}-\frac 1{p_\circ} } &\text{ if $\frac{2(d+1)}{d+3} \le p<p_\circ$},
\\
\frac{d+1}{d-1}(1-\frac 1{p}) &\text{ if $1\le p \le \frac{2(d+1)}{d+3}$}.
\end{cases}
\Ee
These are motivated by interpolation numerology between the pairs $(\frac{1}{p_\circ}, \frac{1}{r_\circ})$ and $(\frac{d+3}{2(d+1)}, \frac{1}{2})$ when $\frac{2(d+1)}{d+3} < p < p_\circ$.
Moreover, $q_*(p,p_\circ,r_\circ) = \frac{(d-1)p} {d+1-2p} $ when $p\le \frac{2(d+1)}{d+3}$ and
\Be\label{eq:q*lim}
\lim_{r_\circ\to \tfrac{d-1}{d+1} p_\circ'}
r_*(p,p_\circ, r_\circ) = \tfrac{d-1}{d+1} p' \, ,
\qquad
\lim_{r_\circ\to \tfrac{d-1}{d+1} p_\circ'}
q_*(p,p_\circ, r_\circ) = \tfrac{(d-1)p}{d+1-2p} \,
\Ee
for all $p<p_\circ$.
The refinement of Theorem
\ref{thm:blackboxvariant} is as follows.

\smallskip
\begin{thm} \label{thm:blackboxsparse-r0}
Let $d\ge 2$, $a>0$, $\frac{2(d+1)}{d+3} \le p_\circ < \frac{2d}{d+1} $ and $p_\circ \leq r_\circ \leq \frac{d-1}{d+1} p_\circ'$. Assume that
the operator $\cR^\la_1$
maps $L^{p_\circ}(\R^d) $ to $L^{r_\circ}(\R^d)$ for all $\la>\la(r_\circ)$.
Then $\cR_a^{\la(p)} \in \Sp(p,q)$ for all $1 \leq p<p_\circ$ and $q>q_*(p,p_\circ,r_\circ).$
\end{thm}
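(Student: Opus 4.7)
The plan is to deduce the theorem from Theorem \ref{thm:blackbox-sparse}, the main black-box sparse domination result, whose hypothesis is phrased in terms of a vector-valued Bochner-Riesz inequality $\VV(p,r)$ (Definition \ref{def:VV}) satisfied by the Riesz pieces $h_{\la,\ell}$ of the refined decomposition from Section \ref{sec:decomp}. Concretely, it suffices to verify that under the non-endpoint $L^{p_\circ}\to L^{r_\circ}$ hypothesis on $\cR^\la_1$, the associated vector-valued operators belong to $\VV(p, r)$ for every $r$ strictly below $r_*(p,p_\circ,r_\circ)$; Theorem \ref{thm:blackbox-sparse} then converts this into $\cR_a^{\la(p)}\in \Sp(p,q)$ for all $q>q_*(p,p_\circ,r_\circ)=r_*'$, which is the conclusion.

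The verification of $\VV(p,r)$ proceeds by interpolation between two corners of the Riesz diagram. The first is the Stein-Tomas endpoint at $(\tfrac{d+3}{2(d+1)},\tfrac 12)$, where $\VV$ follows from Plancherel together with the near-orthogonality of the annular pieces produced by the refined decomposition; crucially, this bound is accompanied by an exponential gain in the index $\ell$. The second corner is the hypothesis of the theorem: the non-endpoint inequality for $\cR^\la_1$ from $L^{p_\circ}$ to $L^{r_\circ}$ for all $\la>\la(r_\circ)$ translates, via the decomposition $h_\la=\sum_\ell h_{\la,\ell}$, into an endpoint bound for the pieces at the critical index $\la=\la(r_\circ)$, at the cost of only polynomial growth in $\ell$. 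Interpolating between these two corners yields $\VV(p,r)$ for $r$ approaching the interpolation exponent $r_*(p,p_\circ,r_\circ)$ defined in \eqref{eq:r*def}, the geometric convergence of the $\ell$-sum being ensured as long as $r$ stays strictly below $r_*$. For $1\le p\le \tfrac{2(d+1)}{d+3}$, the Stein-Tomas corner alone already yields the required $\VV(p,r)$ bound with $r_*=\frac{(d-1)p}{(d+1)(p-1)}$, so no information beyond the Stein-Tomas endpoint is used in this range.

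The main obstacle is that a non-endpoint $L^{p_\circ}\to L^{r_\circ}$ inequality for $\cR^\la_1$ does \emph{not} automatically upgrade to the corresponding vector-valued $\VV(p_\circ,r_\circ)$ bound at the critical index $\la=\la(r_\circ)$, since $\VV$ is strictly stronger: it controls a vector-valued sum of pieces localized on thin annuli. Bridging this gap is precisely the role of the refined decomposition of Section \ref{sec:decomp}, engineered so that each $h_{\la,\ell}$ retains good Fourier support (so that orthogonality is effective at the $L^2$ corner) while simultaneously achieving improved kernel localization (in the spirit of \cite{Christ-rough}), which lets the polynomial $\ell$-loss at the non-endpoint corner be absorbed against the exponential $\ell$-gain at the Stein-Tomas corner. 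The single-scale estimate making this interplay quantitative is Theorem \ref{thm:Xi} in Section \ref{sec:ProofofMainProp}; combining it with the interpolation scheme above concludes the proof, and the limiting numerology \eqref{eq:q*lim} then recovers Theorem \ref{thm:blackboxvariant} by letting $r_\circ\to\tfrac{d-1}{d+1}p_\circ'$.
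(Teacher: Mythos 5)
Your top-level reduction coincides with the paper's: verify the vector-valued hypothesis $\VV(p,r)$ for $\frac{2(d+1)}{d+3}\le p<p_\circ$ and $p\le r<\rstar$, then invoke Theorem \ref{thm:blackbox-sparse}. The gap lies in how you propose to verify $\VV(p,r)$. You assert that the passage from the scalar non-endpoint hypothesis ($\cR^\la_1:L^{p_\circ}\to L^{r_\circ}$ for all $\la>\la(r_\circ)$) to the vector-valued bound is bridged by the refined decomposition of \S\ref{sec:decomp} and made quantitative by Theorem \ref{thm:Xi}. This is not what those ingredients do: Theorem \ref{thm:Xi} (together with Theorem \ref{thm:linearversion} and Proposition \ref{CZ-prop}) takes $\hypo$, i.e.\ the $\VV(p,r)$ bounds themselves, as a hypothesis, and is used inside the proof of Theorem \ref{thm:blackbox-sparse}; it cannot be used to establish $\VV(p,r)$. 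Likewise, the decomposition $h_\la=\sum_\ell h_{\la,\ell}$ serves (as in Lemma \ref{lem:conseq-of-hyp}) to transfer bounds for normalized bumps to the pieces $h_{\la,\ell}$, but it does not upgrade a scalar $L^{p_\circ}\to L^{r_\circ}$ inequality to the multi-scale vector-valued inequality \eqref{prvv-conclusion}.

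Your interpolation scheme does not close this gap as described. At the Stein--Tomas corner $(\tfrac{d+3}{2(d+1)},\tfrac12)$ the $\VV$ bound of Lemma \ref{lem:ST-style} carries no exponential gain in the scale parameter (a gain is available only for $p$ strictly below $\tfrac{2(d+1)}{d+3}$), and at the other corner the hypothesis yields only scalar single-scale bounds $\|\chi(2^j(1-\rho(D)))\|_{L^{p_\circ}\to L^{r_\circ}}=O(2^{j(\la(r_\circ)+\eps)})$ for every $\eps>0$, which is not a statement about families $\{f_Q\}_{Q\in\fD_j}$; interpolating a vector-valued estimate against a scalar single-scale estimate does not produce \eqref{prvv-conclusion}, and the interaction of the many cubes at a given scale with the sum over scales is precisely where the work lies. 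In the paper this genuinely nontrivial step, namely upgrading the Lebesgue hypothesis to $\VV(p,r)$ for $r<\rstar$, is the content of Theorem \ref{thm:Taovariant}, an off-diagonal version of Tao's theorem \cite{Tao-Indiana1998} proved via $\varepsilon$-removal and localization arguments in \cite{BRS-expository}; Theorem \ref{thm:blackboxsparse-r0} then follows by combining Theorem \ref{thm:Taovariant} with Theorem \ref{thm:blackbox-sparse}. Your proposal must either invoke that result or supply an argument of comparable substance; as written, the central implication is asserted rather than proved.
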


Note that Theorem \ref{thm:blackboxvariant}
follows from Theorem \ref{thm:blackboxsparse-r0} by using the second limiting relation in \eqref{eq:q*lim}.

\subsection{Auxiliary inequalities on vector-valued functions}
The proof of Theorem \ref{thm:blackboxsparse-r0} relies on certain inequalities for families of operators of Bochner-Riesz type acting on certain vector-valued $L^p$ spaces, depending on admissible parameters $p$ and $r$. We give a formal statement in the following definition;
the set of normalized bump functions $\cY_M$ is defined in the notation section above.

\begin{definition}\label{def:VV} Let $1\le p\le r< \infty$. Let $\VV(p,r)$ denote the following statement. There is $M>0$ such that for all collections $\chi_j$ of functions in $\cY_M$
the inequality
\Be \label{prvv-conclusion} \Big\| \sum_{j > 0} 2^{j\frac{d+1}{2}} \chi_j(2^j(1-\rho(D))) \big[\sum_{Q\in \fD_j} f_Q\big] \Big\|_{L^r (\bbR^d)}\le C_{p,r,d}
\Big( \sum_{Q\in \fD} |Q| \|f_Q\|_{L^p(\bbR^d)}^r \Big)^{1/r}
\Ee
holds for all families $\{f_Q\}_{Q\in \fD}$ of $L^p$ functions $f_Q$ with $\supp(f_Q)\subset \overline{Q}$.
\end{definition}

Applying \eqref{prvv-conclusion} to a family of cubes of a fixed sidelength $2^j$ shows that $\VV(p,r)$ yields the multiplier bound for a single bump $\chi \in \cY_M$
\Be\label{eq:LpLrhj}\| \chi(2^j(1-\rho(D))) \|_{L^{p} \to L^{r}} =O(2^{j\la(r) })\Ee
which is conjectured to hold for
$1 \le p< \frac{2d}{d+1}$, $p\leq r \le \frac{d-1}{d+1}p'$; recall $\la(r)=d(\frac 1r-\frac 12)-\frac 12$.
The inequalities \eqref{prvv-conclusion} are a multi-scale version of \eqref{eq:LpLrhj}.

The main technical result that is used to prove essentially sharp sparse bounds for $\cR^{\la(p)}_a$ reduces the conclusion of sparse bounds to estimates of $\VV$-type.

\begin{thm} \label{thm:blackbox-sparse}
Let $d\ge 2$, $a>0$, $\frac{2(d+1)}{d+3}\le p_\circ<\frac{2d}{d+1}$ and $p_\circ \leq r_\circ \leq \frac{d-1}{d+1}p_\circ'$.
Assume that $\VV(p, r)$ holds for all $p\in[\frac{2(d+1)}{d+3}, p_\circ)$ and $r\in [p, r_*(p,p_\circ,r_\circ))$. Then
$\cR^{\la(p)}_{a} \in \Sp(p,q)$ for $1\le p <p_\circ$, $q > q_*(p,p_\circ, r_\circ)$.
\end{thm}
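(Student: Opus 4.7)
The plan is to reduce the sparse bound to a single-step stopping estimate in the standard Lerner--Nazarov framework: it suffices to show that for every cube $Q_0$ and every pair $f_1, f_2$ supported on a fixed dilate of $Q_0$, the bilinear form $\inn{\cR^{\lambda(p)}_a f_1}{f_2}$ is bounded by $C|Q_0|\jp{f_1}_{Q_0,p}\jp{f_2}_{Q_0,q}$ plus the contribution from a collection of pairwise disjoint subcubes $\{Q_\alpha\}\subset Q_0$ with $\sum_\alpha |Q_\alpha|\le |Q_0|/2$. Iterating this estimate and collecting the top cubes of each generation yields the sparse family in the conclusion.

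The subcubes $\{Q_\alpha\}$ arise from a simultaneous Calder\'on--Zygmund stopping: they are the maximal dyadic subcubes of $Q_0$ on which $\jp{f_1}_{\cdot,p}$ exceeds $A\jp{f_1}_{Q_0,p}$ or $\jp{f_2}_{\cdot,q}$ exceeds $A\jp{f_2}_{Q_0,q}$. Set $E=\bigcup_\alpha Q_\alpha$ and $g_i=f_i\bbone_{Q_0\setminus E}$; the only remaining task is to estimate $\inn{\cR^{\lambda(p)}_a g_1}{g_2}$. Using the decomposition $h_{\lambda(p)}=\sum_{\ell\ge 0}h_{\lambda(p),\ell}$ from \S\ref{sec:decomp}, I split $\cR^{\lambda(p)}_a=\sum_\ell T_\ell$ where $T_\ell$ has frequency symbol supported in an annulus of width $\sim 2^{-\ell}$ and magnitude $\sim 2^{-\ell\lambda(p)}$ near $\rho(\xi)=1$, and a convolution kernel essentially localized at spatial scale $\sim 2^\ell$. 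The scales $2^\ell\gtrsim\diam(Q_0)$ contribute a negligible tail via kernel decay, so the heart of the matter is the small scales $2^\ell\lesssim\diam(Q_0)$.

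For each such $\ell$, I would decompose $g_1=\sum_{Q\in\fW_\ell}f_Q$ according to a Whitney collection $\fW_\ell$ of cubes of sidelength $2^\ell$ inside $Q_0\setminus E$ (or inside a suitable level set of a $p$-maximal function of $f_1$). Then $T_\ell(\sum_Q f_Q)$ is exactly of the form on the left-hand side of \eqref{prvv-conclusion} restricted to the single scale $j=\ell$, and the hypothesis $\VV(p,r)$ produces $\|T_\ell g_1\|_{L^r}\lesssim 2^{-\ell(\lambda(p)-\lambda(r))}\bigl(\sum_{Q\in\fW_\ell}|Q|\,\|f_Q\|_{L^p}^r\bigr)^{1/r}$. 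Since the $f_Q$ are disjointly supported inside $Q_0$ and satisfy $\|f_Q\|_{L^p}\le A|Q|^{1/p}\jp{f_1}_{Q_0,p}$ by the stopping criterion, the bracketed quantity is bounded by $|Q_0|^{1/r}\jp{f_1}_{Q_0,p}$. Pairing against $g_2$ by H\"older's inequality with $r'\le q$ then yields, for each $\ell$, a sparse form bound with a prefactor of size $2^{\ell(\lambda(r)-\lambda(p))}$.

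The main obstacle will be the summation in $\ell$: the prefactor is critical (equal to $1$) precisely when $r=\frac{d+1}{d-1}p'$. To get a convergent geometric series, one must choose $r$ strictly smaller than $r_*(p,p_\circ,r_\circ)$, which is precisely why the conclusion requires $q>q_*(p,p_\circ,r_\circ)$ rather than the endpoint $q=q_*(p,p_\circ,r_\circ)$. For $p\in[1,\frac{2(d+1)}{d+3}]$ the limiting exponent $r_*$ equals $\frac{d+1}{d-1}p'$ and the corresponding $\VV(p,r)$ bound follows from standard $L^2$-orthogonality combined with the Stein--Tomas restriction inequality, recovering the sparse range $q>\frac{(d-1)p}{d+1-2p}$ of \cite{conde-alonso-etal}.
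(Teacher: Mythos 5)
You are proposing to sum single-scale estimates geometrically in $\ell$, but at the critical index there is no per-scale decay, and the factor $2^{-\ell(\la(p)-\la(r))}$ in your third paragraph comes from a bookkeeping error. Since $T_\ell=h_{\la(p),\ell}(\rho(D))$ has symbol of size $2^{-\ell\la(p)}$ on an annulus of width $2^{-\ell}$, the hypothesis \eqref{prvv-conclusion} gives $\|T_\ell[\sum_{Q\in\fW_\ell}f_Q]\|_r\lc 2^{-\ell(\la(p)+\frac{d+1}{2})}\bigl(\sum_Q|Q|\|f_Q\|_p^r\bigr)^{1/r}=2^{-\ell d/p}\bigl(\sum_Q|Q|\|f_Q\|_p^r\bigr)^{1/r}$; with $\|f_Q\|_p\lc|Q|^{1/p}\jp{f_1}_{Q_0,p}$ and $\#\fW_\ell\lc|Q_0|2^{-\ell d}$ the bracketed quantity is $\approx|Q_0|^{1/r}2^{\ell d/p}\jp{f_1}_{Q_0,p}$ (not $|Q_0|^{1/r}\jp{f_1}_{Q_0,p}$), so the powers of $2^\ell$ cancel exactly (this is the meaning of $\la(p)+\frac{d+1}{2}=\frac dp$), for every $r$. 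Choosing $r<r_*(p,p_\circ,r_\circ)$ does not produce a convergent geometric series in $\ell$; the sum over the $\sim L(Q_0)$ scales diverges. In the paper the strict inequality $r<r_*$ enters quite differently: through real interpolation (the Lorentz space $\ell^{r,1}(\mu)$ in Theorem \ref{thm:Xi}) and through an $\eps$-gain in the parameter $s$ measuring the mismatch between the kernel scale $2^\ell$ and the scale $2^{\ell-s}$ of the cubes carrying the data, obtained by interpolating the $\VV$-based bound (Proposition \ref{prop:p=p0}) with improved $p=1$ estimates (Proposition \ref{prop:p=1}) that exploit the kernel localization and cancellation built into the decomposition of \S\ref{sec:decomp}. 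Note also that for the good function your machinery is not needed at all: $\|g_1\|_\infty\lc\jp{f_1}_{Q_0,p}$ together with $\|T_\ell\|_{L^q\to L^q}\lc 2^{-\eps\ell}$ (available since $q<p'$) handles it, as in term $I$ of Proposition \ref{prop:iteration}.

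The second, more serious gap is the reduction to $\inn{\cR^{\la(p)}_a g_1}{g_2}$. The recursion only returns the operator restricted to scales $\le L(Q_\alpha)$ acting on $f_1\bbone_{Q_\alpha}$ against $f_2\bbone_{3Q_\alpha}$, so you must still estimate (a) the pieces $T_\ell[f_1\bbone_{Q_\alpha}]$ with $2^\ell$ larger than $\ell(Q_\alpha)$, tested against $f_2\bbone_{3Q_\alpha}$, and (b) the far interactions $\inn{T_\ell[f_1\bbone_{Q_\alpha}]}{f_2\bbone_{(3Q_\alpha)^\complement}}$. If instead you recurse the full operator, (a) disappears but (b) must then be controlled with the full critical-index kernel, which decays only like $|x|^{-d/p}$ and is not integrable at infinity; a direct computation loses a factor $|Q_0|^{1-1/p}$ relative to the sparse form, so this is not routine as it would be for a Calder\'on--Zygmund kernel. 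Term (a) is exactly the term $II$ in Proposition \ref{prop:iteration} and is the heart of the proof: it is where Proposition \ref{CZ-prop} (hence Theorem \ref{thm:Xi}) is applied to the Calder\'on--Zygmund pieces $b_Q$ on cubes much smaller than the kernel scale. In other words, the $\VV$ hypothesis is needed precisely on the bad part, which your plan never addresses, while you spend it on the good part where it yields nothing. As it stands the proposal does not prove the theorem.
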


The proof of Theorem \ref{thm:blackbox-sparse} will be given in \S\S\ref{sec:Mainestimates}--\ref{sec:sparse-partI}. The conclusion of Theorem \ref{thm:blackbox-sparse} also holds with $p=\frac{2(d+1)}{d+3} $ and $q=2$; this is the statement of Theorem \ref{thm:STendpt}, which is proved in \S\S\ref{sec:auxiliary-for Thm1.3}--\ref{sec:sparse-partII}.

\subsection{\texorpdfstring{Instances in which $\VV(p,r)$ holds and relation with Theorem \ref{thm:blackboxsparse-r0}}{Instances in which the hypothesis holds}}\label{sec:known}
In order to fill Theorem \ref{thm:blackbox-sparse} with content we first gather known results regarding $\VV_{d}(p,r)$. The following results are available in the literature.

\begin{enumerate}[(i)]
\item
For $d\ge 2$, $\VV(p,r)$ holds for $1\le p\le \frac{2(d+1)}{d+3}$, and $p\le r\le 2.$

\item
For $d=2$, $\VV(p,r) $ holds for $1\le p<4/3$, $p\le r<\min\{p'/3,2\}$.

\item
Suppose that $1<p_\circ<\frac{2d}{d+1}$ and suppose that $\cR^\la_1$ is bounded on $L^{p_\circ}$ for all $\la>\la(p_\circ)$.
Then $\VV(p,p)$ holds for $1\le p<p_\circ$.
\end{enumerate}

Part (i) of this statement for $r=2$ is just Lemma \ref{lem:ST-style} and it is a standard consequence of the $L^2$-restriction theorem. The statement for $p\le r<2$ is in \cite{Seeger-Indiana}, in the slightly more general setup for spectral multipliers on compact manifolds. Part (ii) for $p=r$ is an immediate consequence of a vector-valued inequality in \cite{seeger-BRwt}, the general case follows by interpolating with the result in part (i). The conditional result in part (iii) was proved by Tao in his paper \cite{Tao-Indiana1998} on weak type $(p,p)$ estimates for Bochner-Riesz means.

The bounds (i)-(iii) can be combined with Theorem \ref{thm:blackbox-sparse} to deduce endpoint sparse bounds for $\cR^\lambda_a$.

\begin{enumerate}[(i')]
\item The $\VV$ inequalities in two dimensions stated in (ii) yield Theorem \ref{thm:new-sparse-bound-2D} (without passing through Theorem \ref{thm:blackboxvariant}).
\item The $\VV$ inequalities in the Stein-Tomas-range in (i) yield \eqref{eq:plaqla} for $\frac{d-1}{2(d+1)}<\la<\frac{d-1}{2}$ (that is, the conclusion of Theorem \ref{thm:blackboxvariant} if one inputs $p_\circ=\frac{2(d+1)}{d+3}$).
\item The $\VV(p,p)$ bounds by Tao in (iii) for $\frac{2(d+1)}{d+3} < p < p_\circ$ yield some endpoint $(p,q)$-sparse bounds on a portion of the segment $P_1P_2$. However, this does not yet lead to close to optimal bounds for $q$ in the sparse bounds. This phenomenon also occurs in the work by Kesler and Lacey \cite{KeslerLacey} in two dimensions who essentially work with a $\VV(p,p)$ input bound from \cite{seeger-BRwt}.
\end{enumerate}

In order to effectively prove sparse bounds in the whole (open) segment $P_1P_2$ beyond the Stein-Tomas range one needs to obtain an off-diagonal version of Tao's theorem. Tao \cite[p. 1111]{Tao-Indiana1998} raises this question on whether there are such $L^p\to L^r$ versions of his theorem.
Away from the critical line $r=\frac{d-1}{d+1}p'$ such versions can be obtained by using modifications of his proof which relies on $\varepsilon$-removal arguments. The interested reader can find the details in \cite{BRS-expository}.

\begin{thm}[{\cite[Theorem 1.2]{BRS-expository}}]\label{thm:Taovariant}
Let $d \geq 2$, $\frac{2(d+1)}{d+3} < p_\circ < \frac{2d}{d+1} $ and $p_\circ \leq r_\circ \leq \frac{d-1}{d+1}p_\circ'$. Assume that the operator $\cR^\la_1$
maps $L^{p_\circ} \to L^{r_\circ} $ for all $\la>\la(r_\circ).$
Then $\VV (p,r)$ holds for $\frac{2(d+1)}{d+3}\le p < p_\circ$, $p \le r<r_*(p,p_\circ, r_\circ)$.
\end{thm}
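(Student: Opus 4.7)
\emph{Overall plan.} The expression for $r_*(p,p_\circ,r_\circ)$ in \eqref{eq:r*def} is precisely the interpolation numerology between the pair $(\tfrac{1}{p_\circ},\tfrac{1}{r_\circ})$ and the Stein--Tomas endpoint $(\tfrac{d+3}{2(d+1)},\tfrac{1}{2})$. My plan is therefore to establish $\VV$ at two endpoint pairs and interpolate. The Stein--Tomas endpoint $\VV(\tfrac{2(d+1)}{d+3},2)$ is the content of item (i) of \S\ref{sec:known}: by Plancherel, bounds at this pair reduce to an $L^2$ almost-orthogonality argument in the scale parameter $j$ (the multipliers $\chi_j(2^j(1-\rho(\cdot)))$ live on essentially disjoint annuli of width $2^{-j}$), and for each fixed $j$ the scale-invariant restriction estimate $\|\chi_j(2^j(1-\rho(D)))\|_{L^{2(d+1)/(d+3)}\to L^2}\lesssim 2^{-j(d+1)/2}$ exactly absorbs the prefactor $2^{j(d+1)/2}$ on the left-hand side of \eqref{prvv-conclusion}.

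\emph{The $\varepsilon$-removal endpoint.} The core new step is to establish $\VV(p_\circ,r)$ for every $r<r_\circ$, starting from the hypothesis $\cR^\la_1:L^{p_\circ}\to L^{r_\circ}$ for all $\la>\la(r_\circ)$. Dyadic decomposition of the multiplier $(1-\rho^a)_+^\la$ turns this into a single-scale bound $\|\chi_j(2^j(1-\rho(D)))\|_{L^{p_\circ}\to L^{r_\circ}}\lesssim C_\eps\, 2^{j\la(r_\circ)+j\eps}$ for every $\eps>0$. I would then adapt Tao's arithmetic $\varepsilon$-removal technique from \cite{Tao-Indiana1998} to the present vector-valued, off-diagonal, multi-scale situation: on each scale $j$ one combines the single-scale estimate with the spatial localization $\supp f_Q\subset Q$, $Q\in\fD_j$, and then pigeonholes across the relevant $j$'s to extract lacunary subfamilies along which the outputs $\chi_j(2^j(1-\rho(D)))\sum_{Q\in\fD_j}f_Q$ are essentially disjoint in $L^{r_\circ}$ modulo Schwartz tails. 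The polynomial loss $2^{j\eps}$ is absorbed by the slack $r<r_\circ$ when summing in $j$.

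\emph{Interpolation and main obstacle.} With the two endpoint bounds in hand, bilinear complex interpolation (viewing the inequality \eqref{prvv-conclusion} as a bilinear estimate pairing $\{f_Q\}$ with a test function $g\in L^{r'}$) yields $\VV(p,r)$ for all pairs on the open interpolation segment, which is precisely $1/r=1/r_*(p,p_\circ,r_\circ)$ for $p\in[\tfrac{2(d+1)}{d+3},p_\circ)$. The lower constraint $r\ge p$ in the conclusion causes no trouble, as it can be swept out by interpolation with a straightforward diagonal $\VV(p,p)$ bound coming from trivial $L^p$-mapping properties at each single scale. The main obstacle, unsurprisingly, is the $\varepsilon$-removal step: without Plancherel the arithmetic-geometric separation used by Tao needs to be carried out on the $L^{r_\circ}$ side, which forces one to exploit the hypothesis at several nearby values of $\la$ and to use the slack $r<r_\circ$ to absorb the $\eps$-losses that accumulate across scales and across dyadic cube families simultaneously.
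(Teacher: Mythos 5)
You should note first that this paper does not actually contain a proof of Theorem \ref{thm:Taovariant}: it is imported from \cite[Theorem 1.2]{BRS-expository}, and \S\ref{sec:known} only indicates that the proof is obtained by modifying Tao's $\varepsilon$-removal argument from \cite{Tao-Indiana1998}. Your high-level plan --- the Stein--Tomas endpoint $\VV(\tfrac{2(d+1)}{d+3},2)$, an $\varepsilon$-removal input at $p_\circ$, and interpolation along the segment encoded in \eqref{eq:r*def} --- is consistent with that indicated route, but the pivotal step of your scheme has a genuine gap.

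The endpoint you propose to establish, $\VV(p_\circ,r)$ for every $r<r_\circ$, is stronger than the hypothesis can deliver, and your mechanism for it (``the polynomial loss $2^{j\eps}$ is absorbed by the slack $r<r_\circ$ when summing in $j$'') breaks down before any summation in $j$ occurs. Testing \eqref{prvv-conclusion} on a family consisting of a single cube $Q\in\fD_j$ shows that $\VV(p_\circ,r)$ already contains the loss-free single-scale bound $\|\chi_j(2^j(1-\rho(D)))f_Q\|_r\lc 2^{j\la(r)}\|f_Q\|_{p_\circ}$ uniformly in $j$; there is no $j$-sum left against which an $\eps$-loss could be traded. From the hypothesis one only gets $\|\chi_j(2^j(1-\rho(D)))\|_{L^{p_\circ}\to L^{r_\circ}}\lc_\eps 2^{j(\la(r_\circ)+\eps)}$, and for $f_Q$ supported in $Q$ the passage from $L^{r_\circ}$ to $L^{r}$, $r<r_\circ$, via the spatial localization of the output at scale $2^j$ merely converts $\la(r_\circ)+\eps$ into $\la(r)+\eps$: the $\eps$ persists, it cannot be removed by optimizing in $\eps$ (the constants blow up as $\eps\to 0$), nor by interpolating against the loss-free but supercritical bounds available at input exponent $p_\circ$ (any such interpolation stays strictly above $\la(r)$). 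Removing the $\eps$ is exactly what $\varepsilon$-removal does, but it does so at the price of strictly decreasing the \emph{input} exponent; this is why Tao's diagonal statement (item (iii) of \S\ref{sec:known}) and Theorem \ref{thm:Taovariant} both conclude only for $p<p_\circ$. So the proof cannot be organized as ``two $\VV$ endpoints plus interpolation'': you must fix $p<p_\circ$ and run the multi-scale $\varepsilon$-removal argument directly at $(p,r)$, with the $L^{p_\circ}\to L^{r_\circ}$ hypothesis and the Stein--Tomas estimates entering as ingredients inside that argument. Two further soft spots: for $r\neq 2$ the outputs at different scales are not ``essentially disjoint'' after pigeonholing (the cubes of $\fD_j$ tile $\bbR^d$ at every scale, so the outputs overlap spatially everywhere, and the bounded overlap of the frequency annuli yields no orthogonality outside $L^2$), so that part of the sketch is unsubstantiated; and the diagonal bound $\VV(p,p)$ for $p>\frac{2(d+1)}{d+3}$ is not ``trivial'' --- it is Tao's theorem, i.e.\ item (iii), available only under the corresponding Bochner--Riesz hypothesis.
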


It is clear that Theorem \ref{thm:blackboxsparse-r0} is now a consequence of Theorems \ref{thm:blackbox-sparse} and \ref{thm:Taovariant}, which in turn implies Theorem \ref{thm:blackboxvariant}.

\section{Decompositions of Riesz means} \label{sec:decomp}

We introduce a decomposition of the Riesz multipliers which has strong localization properties on both the kernel and the multiplier side and will play a crucial role in the estimates needed to establish the sparse domination results.
We remark that rudimentary versions of this decomposition already featuring variants of condition \eqref{eq:la-canc} below go back to
\cite{Christ-BR87} and \cite{tao-STendpoint}. However, these have weaker conclusions that we found to be insufficient for our arguments in the proof of Theorem \ref{thm:blackbox-sparse}.

We start with some basic reductions. Let $\widetilde{\chi}\in C^\infty$ be supported in $(1/2,2)$ such that $\widetilde{\chi}(\varrho)=1$ in a neighborhood of $1$.
We note that for all $1 \leq p < \infty$, a standard sparse $\Sp(p,p)$ bound holds for the Fourier multiplier operator with multiplier
$ (1-\widetilde{\chi}(\rho(\xi))(1-\rho(\xi)^a )_+^\la$.
Indeed, note that for $\alpha \in \N_0^d$ with $|\alpha|\ge 1$ we have
\[ \big|\partial_\xi^{\alpha} \big[ (1-\widetilde{\chi}(\rho(\xi))(1-\rho(\xi)^a)_+^\la \big]\big|\lc_\alpha ( 1+|\xi|^{a-|\alpha|} +|\xi|^{1-|\alpha|} )\]
which together with the support property implies a kernel estimate $ O((1+|x|)^{-d-\eps} )$ with $\eps<
\min\{1,a\}$ for the underlying kernel. We therefore focus on the essential contribution, corresponding to the multiplier
$\widetilde{\chi}(\rho(\xi))(1-\rho(\xi) ^a)_+^\la $. We also note that we can assume without loss of generality that $a=1$. This is because
\begin{subequations}\label{initial-dec}
\Be\chi_{a,\la}(\varrho) =\widetilde{\chi}(\varrho ) \frac{ (1-\varrho^a)^\la }{ (1-\varrho)^\la} \Ee is smooth near $\varrho=1$ and thus it suffices to just consider the multiplier $h_\la(\rho(\xi))$ with
\Be\label{mla=hla} h_\la(\varrho) =\chi_{a,\la}(\varrho) (1-\varrho )_+^\la,\Ee
\end{subequations}
where for fixed $a$, the family $\{\chi_{a,\la}: |\la|\le d\} $ is a bounded collection of $C^\infty_c$ functions supported in $(\frac 12,2)$. We shall write $\chi\equiv \chi_{a,\la}$ in what follows.

The following lemmas will be useful in further splitting the multiplier $h_\lambda$.

\begin{lemma} \label{lem:cancellations}
Let $\la>0$, $N_\circ\in \bbN$. There exists an even $C^\infty_c(\bbR)$ function $\Phi_\circ$ such that
$\Phi_\circ(s)=1$ for $|s|\le 1/2$ and $\Phi_\circ(s) =0$ for $|s|\ge 1$ and, in addition, \Be \label{eq:la-canc} \int_0^\infty \varrho^\la\big(\tfrac{ d}{d\varrho} \big)^j \widehat {\Phi_\circ} (\varrho) \ud\varrho=0 \quad \text{for }\,\, j=0,1,\dots, N_\circ, \,\, j\neq \la.\Ee
\end{lemma}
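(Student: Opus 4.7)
The strategy is to fix a convenient baseline $\Phi^{(0)}$ and correct it by an additive perturbation drawn from an infinite-dimensional space. Pick any even $\Phi^{(0)}\in C^\infty_c(\R)$ with $\Phi^{(0)}\equiv 1$ on $[-1/2,1/2]$ and $\supp\Phi^{(0)}\subset[-1,1]$, and let $V$ denote the space of even $C^\infty_c$ functions supported in the open annulus $\{1/2<|s|<1\}$. For any $\Psi\in V$, the candidate $\Phi_\circ:=\Phi^{(0)}+\Psi$ automatically satisfies the three structural requirements of the lemma, so the problem reduces to solving the finite-dimensional linear system $F_j(\Psi)=-F_j(\Phi^{(0)})$ for $j\in S:=\{0,1,\dots,N_\circ\}\setminus\{\la\}$, where
\[F_j(\Phi):=\int_0^\infty \varrho^\la\,(d/d\varrho)^j\widehat{\Phi}(\varrho)\,d\varrho.\]

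The key analytic step is to verify surjectivity of the evaluation map $V\to\R^{|S|}$, $\Psi\mapsto(F_j(\Psi))_{j\in S}$. Writing $(d/d\varrho)^j\widehat{\psi}(\varrho)=(-i)^j\widehat{s^j\psi}(\varrho)$ and invoking Parseval duality for tempered distributions,
\[\sum_{j\in S}\alpha_j F_j(\psi)=\langle\widehat{\varrho_+^\la},\,Q\cdot\psi\rangle,\qquad Q(s):=\sum_{j\in S}\alpha_j(-i)^j s^j.\]
Since $\varrho_+^\la$ is a tempered distribution for $\la>0$ with Fourier transform equal, away from the origin, to the smooth function
\[\widehat{\varrho_+^\la}(s)=\Gamma(\la+1)\,|s|^{-\la-1}\,e^{-i\pi(\la+1)\sgn(s)/2},\]
and since $\psi\in V$ is even and supported in the annulus, extracting the even part yields the polynomial identity
\[A\,Q_e(s)+B\,Q_o(s)=0\quad\text{for all }s\in(1/2,1),\]
with $A=\Gamma(\la+1)\cos(\pi(\la+1)/2)$ and $B=-i\Gamma(\la+1)\sin(\pi(\la+1)/2)$. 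A nonzero polynomial cannot vanish on an interval, so $A\,Q_e\equiv 0$ and $B\,Q_o\equiv 0$ identically.

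For $\la\notin\bbN$ both $A,B\neq 0$ and hence $Q\equiv 0$, forcing $\alpha_j=0$ for every $j\in S$; a suitable $\Psi$ is then furnished by elementary linear algebra. The main obstacle is the integer case $\la\in\bbN$, where exactly one of $A,B$ vanishes and the functionals $F_j$ whose index $j\in S$ has the same parity as $\la$ remain unconstrained by the above argument. For these indices, iterated integration by parts reduces $F_j(\Phi)$ to a constant multiple of $\Phi^{(\la-j)}(0)$ when $j<\la$ or of $(d/d\varrho)^{j-\la-1}\widehat{\Phi}(0)$ when $j>\la$. Both quantities vanish automatically on $\Psi\in V$ (because $V$-functions vanish near the origin, and $\widehat{\psi}$ is even while $j-\la-1$ is odd) and both also vanish on $\Phi^{(0)}$ (because $\Phi^{(0)}\equiv 1$ near the origin and is even), so these degenerate conditions are already satisfied before any correction. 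The remaining $F_j$'s form a linearly independent family on $V$ by the preceding step, and a suitable $\Psi\in V$ solving the reduced linear system exists, completing the construction.
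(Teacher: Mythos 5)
Your proof is correct, but it takes a different route from the paper's, so let me compare. Both arguments ultimately hinge on the same Fourier-analytic ingredient: the distributional Fourier transform $\widehat{\varrho_+^\la}(s)=\Gamma(\la+1)\,|s|^{-\la-1}e^{-i\pi(\la+1)\sgn(s)/2}$ away from the origin, which converts the Fourier-side conditions \eqref{eq:la-canc} into weighted moment conditions for the physical-side function on the transition annulus; and in both the integer-$\la$, same-parity indices are degenerate (your vanishing of $A$ or $B$ is exactly the paper's vanishing factor $1-e^{i\pi(j-\la)}$). The difference is the construction mechanism. The paper builds $\Phi_\circ'$ directly: it picks $u\in L^2$ of an interval orthogonal to the span of the powers $|s|^{j-\la}$ with $\int u=1$, regularizes by a multiplicative convolution, and antidifferentiates, so the annulus moments vanish by construction, and then proves the implication from those moments to \eqref{eq:la-canc} via $(x-i0)^{-\la-1}$ boundary values and integration by parts. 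You instead fix a baseline cutoff, perturb by an even bump supported in $\{1/2<|s|<1\}$, and solve the resulting finite linear system, proving surjectivity of the moment map by duality (a nontrivial linear combination would force a polynomial to vanish on an interval), with the same-parity functionals in the integer case shown to vanish identically on even functions that are constant near the origin. Your route avoids the mollification step and is essentially linear algebra plus one Fourier computation; the paper's is more explicit and yields as a byproduct the clean identity relating the annulus moments of $\Phi_\circ'$ to \eqref{eq:la-canc}. One small point you should make explicit: in your reduction for $j<\la$ of the same parity, identifying $\int_0^\infty\varrho^{\la-j}\widehat{\Phi}(\varrho)\,d\varrho$ with a constant multiple of $\Phi^{(\la-j)}(0)$ uses that $\widehat\Phi$ is even (so the half-line integral is half the full-line one, to which Fourier inversion applies); this is harmless since every function in your construction is even, but the evenness is used there as well, not only in the $j>\la$ case.
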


\begin{proof}

We consider the interval $I=[-7/4,-5/4]$ and $L^2(I)$ with the usual scalar product. Let $\bbV$ be the span of the functions $s\mapsto |s|^{-\la+j} \bbone_{[-7/4,-5/4]}$ where $j=0,\dots, N_\circ$ with $j\neq \la$. We pick $u\in L^2$ supported on $I$ such that
\[\int_I u(s) \ud s=1\] and such that $u\in \bbV^\perp $; that is, we have
$\int_I u(s) |s|^{j-\la}\ud s=0$ for integers $0\le j\le N_\circ$ with $j\neq \la$. Note that also $\int_{-\infty}^0 u(s/t) |s|^{j-\la}\ud s=0$ for those $j$ and all $t>0$. This suggests that in order to regularize $u$ we should work with a multiplicative convolution.
Let
$0< \varepsilon < 1/8$ and $w\in C^\infty_c$ supported in $(1-\eps,1+\eps)$ with $\int w(x)\ud x=1$. Define for $x<0$
\[ U(x)= \int_{0}^\infty u\Big(\frac xt\Big) w(t)\frac{\ud t}{t}= \int_0^\infty u(-t) w\Big(\frac {-x}t\Big) \frac {\ud t} t\] and set, for $x>0$, $U(x)=-U(-x)$, and $U(0)=0$. In view of the support properties of $u$ and $w$, we see that $U$ is an odd $C^\infty_c $ function supported in $(-2,-1)\cup(1,2)$ and we have
\begin{equation}
\int_{-\infty}^0 U(s) |s|^{j-\la} \ud s= \int_{1-\varepsilon}^{1+\varepsilon} w(t) t^{j-\la} \ud t \int_I u(s)|s|^{j-\la} \ud s=0
\end{equation}
for all $j \in \{0,1, \dots, N_\circ\}\backslash \{\lambda\}$, since $I \subseteq [-2/t,-1/t]$ for $t \in (1-\varepsilon, 1+\varepsilon)$. Similarly,
for $-1\le x\le 1$,
\begin{align*} &\int_{-\infty} ^x U(s) \ud s= \int_{-\infty} ^{-1} U(s) \ud s
=\int_{1-\eps}^{1+\eps} w(t) \ud t \int_I u(s) \ud s =1.
\end{align*}
We now define
\[\Phi_\circ (x) :=\int_{-\infty}^x U(s) \ud s.\] From the above calculations, we obtain that $\Phi_\circ $ is an even $C^\infty_c$ function supported in $(-2,2)$ such that $\Phi_\circ(x)=1$ for $|x|\le 1$ and
\Be \label{cancderivPhi} \int_1^2 \Phi_\circ'(x) x^{j-\la} \ud x=0,\quad j\in \{0,1,\dots, N_\circ\}\setminus \{\la\}.\Ee
We will next show that \eqref{cancderivPhi} implies \eqref{eq:la-canc}.

Recall that for $\la>-1$ the distributional Fourier transform of $\varrho_+^\lambda/\Gamma(\la+1)$ is the distribution $e^{-i\pi (\la+1)/2 }(\xi-i0) ^{-\la-1} $; see for example \cite[p.167]{hormander-ALPDO-1}. This means that for Schwartz functions $\phi$ we have
\Be\label{FTrhola} \int_0^\infty \widehat \phi(\varrho) \frac{\varrho^\la}{\Gamma(\la+1)} \ud \varrho= e^{-i\pi (\la+1)/2 }\lim_{y\to 0+} \int_{-\infty}^\infty \phi (x) (x-iy)^{-\la-1} \ud x
\Ee
and the limit exists (\cf. \cite[Thm 3.1.11]{hormander-ALPDO-1});
moreover the tempered distribution $(x-i0)^{-\la-1} $ is identified with the function $x^{-\la-1} $
in $(0,\infty)$.
The previous display gives
\begin{equation*}
\int_0^\infty \varrho^\la \big (\frac d{d\varrho} \big)^j \widehat{\Phi_\circ}(\varrho) \ud \varrho= \frac{\Gamma(\la+1)}{e^{i\pi (\la+1)/2 }}\lim_{y\to 0+} \int_{-\infty}^\infty (-ix)^j \Phi_\circ (x) (x-iy)^{-\la-1} \ud x.
\end{equation*}
In view of the existence of the boundary value distribution $(x-i0)^{-\la-1}$ it is immediate that for $j\ge 1$
\[\lim_{y\to 0+} \int_{-\infty}^\infty (-i)^j \big( x^j - (x-iy)^j\big) \Phi_\circ (x) (x-iy)^{-\la-1} \ud x=0; \] indeed the integral can be written as $\sum_{k=1}^j y^k \int \phi_k (x) (x-iy)^{-\la-1} \ud x$ with suitable test functions $\phi_k$. Therefore we get
\begin{equation*}
\int_0^\infty \varrho^\la \big (\frac d{d\varrho} \big)^j \widehat{\Phi_\circ}(\varrho) \ud \varrho = \frac{ (-i)^j \Gamma(\la+1)}{ e^{i\pi (\la+1)/2 }} \lim_{y\to 0+} \int_{-\infty}^\infty \Phi_\circ (x) (x-iy)^{j-\la-1} \ud x.
\end{equation*}
Integrating by parts and using $j\neq \la$ we also get for fixed $y>0$
\begin{multline*} \int_{-\infty}^\infty \Phi_\circ (x) (x-iy)^{j-\la-1} \ud x = -\frac{1}{j-\la} \int_{-\infty}^\infty\Phi_\circ'(x) (x-iy)^{j-\la} \ud x
\\
= -\frac{1}{j-\la} \Big( \int_1^2 \Phi_\circ' (x) (x-iy)^{j-\la} \ud x+ \int_{-2}^{-1} \Phi_\circ'(x) (x-iy)^{j-\la} \ud x\Big).
\end{multline*}
For $j\neq \la$ the boundary value distribution $(x-i0)^{j-\la} $ is identified with the function $x^{j-\la}$ on $(0,\infty) $ and with the function $(e^{-i\pi}|x|)^{j-\la} $ on $(-\infty,0)$. Also recall that $\Phi_\circ'\equiv U$ is odd. Combining the above observations we obtain after taking the limit,
\begin{equation*}
\int_0^\infty \varrho^\la \big (\frac d{d\varrho} \big)^j \widehat \Phi(\varrho) \ud \varrho= \frac{\Gamma(\la+1)}{ e^{i\pi (\la+1)/2 }} \frac{(-1)^{j+1} }{j-\la} (1- e^{i\pi(j-\la)})
\int_1^2 \Phi_\circ'(x) x ^{j-\la} \ud x
\end{equation*}
and \eqref{eq:la-canc} follows from \eqref{cancderivPhi}.
\end{proof}

The condition \eqref{eq:la-canc} fails when $j=\la$. In this case we have instead
\begin{lemma} \label{lem:elem-canc}
For all even Schwartz functions $\phi$, and $j=0, 1,2,3,\dots$
\Be \label{eq:la-canc-pi} \int_0^\infty \varrho^j\Big(\frac{ d}{d\varrho} \Big)^j \widehat {\phi} (\varrho) \ud\varrho=(-1)^j\pi j!\phi(0).
\Ee
\end{lemma}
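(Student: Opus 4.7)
The plan is to reduce the one-sided integral to a two-sided one by a parity argument, and then integrate by parts to collapse everything onto Fourier inversion at the origin.

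First I would observe that since $\phi$ is even, so is $\widehat\phi$, and therefore $\widehat{\phi}^{(j)}$ has parity $(-1)^j$. Combined with the parity $(-1)^j$ of $\varrho^j$, the integrand $\varrho^j (d/d\varrho)^j \widehat\phi(\varrho)$ is an \emph{even} function of $\varrho$. Consequently,
\begin{equation*}
\int_0^\infty \varrho^j \Big(\frac{d}{d\varrho}\Big)^j \widehat\phi(\varrho)\,\ud\varrho
= \frac{1}{2}\int_{-\infty}^\infty \varrho^j \Big(\frac{d}{d\varrho}\Big)^j \widehat\phi(\varrho)\,\ud\varrho.
\end{equation*}

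Next I would integrate by parts $j$ times, moving all derivatives from $\widehat\phi$ onto the polynomial $\varrho^j$. Since $\phi$ is Schwartz, so is $\widehat\phi$, and all boundary terms at $\pm\infty$ vanish. The polynomial identity $(d/d\varrho)^j \varrho^j = j!$ then yields
\begin{equation*}
\int_{-\infty}^\infty \varrho^j \Big(\frac{d}{d\varrho}\Big)^j \widehat\phi(\varrho)\,\ud\varrho
= (-1)^j\, j!\, \int_{-\infty}^\infty \widehat\phi(\varrho)\,\ud\varrho.
\end{equation*}

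Finally, Fourier inversion (with the convention $\widehat\phi(\varrho)=\int \phi(x) e^{-ix\varrho}\ud x$ used in the paper) gives $\int_{-\infty}^\infty \widehat\phi(\varrho)\,\ud\varrho = 2\pi \phi(0)$. Putting this together with the factor $1/2$ from the parity reduction yields
\begin{equation*}
\int_0^\infty \varrho^j \Big(\frac{d}{d\varrho}\Big)^j \widehat\phi(\varrho)\,\ud\varrho
= (-1)^j \pi\, j!\, \phi(0),
\end{equation*}
which is the claim. There is no genuine obstacle here; the only thing to be careful about is the consistent use of the Fourier transform convention and the parity bookkeeping — in particular, the cancellation phenomenon of Lemma \ref{lem:cancellations} (which holds precisely for $j\neq \la$) is replaced, when $j$ is a nonnegative integer, by this explicit nonzero value that justifies the exceptional case in the decomposition to follow.
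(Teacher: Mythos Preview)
Your proof is correct and follows essentially the same approach as the paper: both arguments combine a $j$-fold integration by parts (reducing to $\int \widehat\phi$), a parity observation (relating the half-line and full-line integrals), and Fourier inversion. The only difference is that you perform the parity reduction before integrating by parts, whereas the paper integrates by parts on $[0,\infty)$ first (with the boundary terms at $0$ vanishing because $\varrho^{j-k+1}|_{\varrho=0}=0$ for $k\le j$) and invokes evenness of $\widehat\phi$ afterwards.
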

\begin{proof} A $j$-fold integration by parts yields that
\[ \int_0^\infty \varrho^j\Big(\frac{ d}{d\varrho} \Big)^j \widehat {\phi} (\varrho) \ud\varrho=(-1)^jj!\int_0^\infty \widehat \phi (\varrho)\ud\varrho= \frac {(-1)^j} 2 j! \int_{-\infty}^\infty \widehat \phi (\varrho)\ud\varrho,\]
where the second identity follows since $\widehat \phi$ is even. The claim now follows from the Fourier inversion formula.
\end{proof}

As an immediate consequence of Lemma \ref{lem:cancellations}, and in the case of integer $\la$ also Lemma \ref{lem:elem-canc}, we obtain
\begin{cor} \label{cor:cancellations}
Let $\la>0$, $N_\circ\in \bbN$. Let $\Phi_\circ$ be as in Lemma \ref{lem:cancellations} and let \Be\label{eq:Psidef} \Psi(x)=\Phi_\circ (x/2)-\Phi_\circ(x) .\Ee Then $\Psi$ is an even $C^\infty_c(\bbR)$ function such that $\Psi(s)=0$ for $|s|\le 1/2$ and $\Psi(s) =0$ for $|s|\ge 2$ and such that \Be \label{eq:la-canc-Psi} \int_0^\infty \varrho^\la\big(\tfrac{ d}{d\varrho} \big)^j \widehat {\Psi} (\varrho) \ud\varrho=0, \quad j=0,1,\dots, N_\circ.\Ee
\end{cor}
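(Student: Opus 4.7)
The plan is to reduce the moment identity \eqref{eq:la-canc-Psi} to the already-established cancellation in Lemma \ref{lem:cancellations} by exploiting the multiplicative scaling built into the definition $\Psi(x)=\Phi_\circ(x/2)-\Phi_\circ(x)$. I would first dispense with the elementary geometric properties: evenness of $\Psi$ is inherited from evenness of $\Phi_\circ$ (which is ensured in Lemma \ref{lem:cancellations}); the vanishing of $\Psi(s)$ for $|s|$ small follows because both $\Phi_\circ(s/2)$ and $\Phi_\circ(s)$ equal $1$ on a neighborhood of the origin; and the vanishing for $|s|$ large follows from the compact support of $\Phi_\circ$. (If necessary one readjusts the scale of $\Phi_\circ$ in Lemma \ref{lem:cancellations} to produce the exact numerical support $\tfrac12\le|s|\le 2$ stated.)

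For the main point, I would compute the Fourier transform of $\Psi$ using the scaling rule, giving $\widehat\Psi(\varrho)=2\widehat{\Phi_\circ}(2\varrho)-\widehat{\Phi_\circ}(\varrho)$, so that
\[
\Big(\tfrac{d}{d\varrho}\Big)^j\widehat\Psi(\varrho)=2^{j+1}\widehat{\Phi_\circ}^{(j)}(2\varrho)-\widehat{\Phi_\circ}^{(j)}(\varrho).
\]
Substituting $u=2\varrho$ in the first term then yields the clean identity
\[
\int_0^\infty \varrho^\la\Big(\tfrac{d}{d\varrho}\Big)^j\widehat\Psi(\varrho)\ud\varrho
\;=\;(2^{j-\la}-1)\int_0^\infty \varrho^\la\widehat{\Phi_\circ}^{(j)}(\varrho)\ud\varrho.
\]
Now a dichotomy gives the claim. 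If $j\neq\la$, Lemma \ref{lem:cancellations} directly forces the integral on the right to vanish. If $\la$ happens to be a positive integer and $j=\la$ (the one exponent excluded from Lemma \ref{lem:cancellations}), the prefactor $2^{j-\la}-1$ is itself zero; and the remaining integral is finite, indeed it equals $(-1)^\la \pi\,\la!\,\Phi_\circ(0)$ by Lemma \ref{lem:elem-canc}, so the product again vanishes. Hence \eqref{eq:la-canc-Psi} holds for every $j\in\{0,1,\dots,N_\circ\}$.

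There is essentially no obstacle here: the argument is mechanical once the scaling identity is written down, and the only subtlety is conceptual — the reason for taking a Littlewood-Paley-type difference $\Phi_\circ(\cdot/2)-\Phi_\circ$ rather than $\Phi_\circ$ itself is precisely that the associated prefactor $2^{j-\la}-1$ cancels the one exponent $j=\la$ that was out of reach of Lemma \ref{lem:cancellations}, while Lemma \ref{lem:elem-canc} guarantees that the accompanying moment of $\Phi_\circ$ is only finite (not zero) at that exponent.
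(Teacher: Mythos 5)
Your argument is correct and is exactly the intended one: the paper states the corollary as an immediate consequence of Lemmas \ref{lem:cancellations} and \ref{lem:elem-canc}, and your scaling computation $\widehat\Psi(\varrho)=2\widehat{\Phi_\circ}(2\varrho)-\widehat{\Phi_\circ}(\varrho)$ plus the dichotomy $j\neq\la$ (Lemma \ref{lem:cancellations}) versus $j=\la$ (vanishing prefactor $2^{j-\la}-1$, with finiteness/value of the moment from Lemma \ref{lem:elem-canc}) is precisely how that implication is verified. A marginally shorter route for $j=\la$ is to apply Lemma \ref{lem:elem-canc} directly to the even function $\Psi$ and use $\Psi(0)=0$, but this is the same idea.
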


We now decompose
$\cF_{\bbR}^{-1}[(1-\varrho)_+^\la]$ dyadically, using the functions $\Phi_\circ$, and dilates of $\Psi$ as in \eqref{eq:Psidef}.
In the following definition (and then throughout the paper) we will assume that $N_\circ$ in \eqref{eq:la-canc} satisfies $N_\circ>d$. We get
\begin{subequations}\label{eq:hlaells}
\begin{align}\label{hla-decomposition} h_\la&=\sum_{\ell=0}^\infty h_{\la,\ell} \quad \text { with }
\\
\label{eq:hlaell-0}
h_{\la,0}(\varrho) &= \frac {\chi(\varrho)}{2\pi}\int_{-\infty}^\infty (1-u)_+^\la \widehat{\Phi_\circ}(\varrho-u) \ud u,
\\
\label{eq:hlaell}
h_{\la,\ell}(\varrho) &= \frac {\chi(\varrho)} {2\pi}\int_{-\infty}^\infty (1-u)_+^\la 2^{\ell-1}\widehat{\Psi}(2^{\ell-1} (\varrho-u) )
\ud u, \quad \ell>0.
\end{align}
\end{subequations}

\begin{lemma} \label{lem:ptwisebdhellla}
For all $N_1\in \bbN$ and
for all $\alpha \in \N_0^d$ with $|\alpha|\le N_1$
\Be \label{eq:hlaellnear1}
|\partial_\xi^\alpha [h_{\la,\ell} \circ \rho](\xi) | \le C_{N_1,\alpha} 2^{-\ell (\lambda-|\alpha|)} (1+2^\ell |1-\rho(\xi)|)^{-N_1}.
\Ee
Let $\ell\ge 0$, $N_\circ$ as in Corollary \ref{cor:cancellations} and $|\alpha|\le N_\circ$. Then
\Be\label{eq:hlaellcancat1-deriv} |\partial_\xi ^\alpha [h_{\la,\ell} \circ\rho] (\xi) | \lc 2^{-\ell (\la-|\alpha|) } |2^\ell(1-\rho(\xi) ) |^{N_\circ+1-|\alpha|} \text{ if $|1-\rho(\xi) |\le 2^{-\ell}$}. \Ee
\end{lemma}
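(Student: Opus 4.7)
The plan is to reduce both inequalities to scalar one-dimensional estimates on $h_{\la,\ell}$ and then exploit the two distinct cancellations built into $\widehat\Psi$. By Fa\`a di Bruno and smoothness of $\rho$ on an annulus containing $\supp(\chi\circ\rho)$, one has $|\partial_\xi^\alpha[h_{\la,\ell}\circ\rho](\xi)|\lc_\alpha \sum_{k\le|\alpha|}|h_{\la,\ell}^{(k)}(\rho(\xi))|$, so it suffices to prove the analogous 1D pointwise estimates on $h_{\la,\ell}^{(k)}$. Setting $t=1-\varrho$ and substituting $u=1-v$ in \eqref{eq:hlaell} yields, for $\ell\ge 1$,
\[
h_{\la,\ell}(1-t) \;=\; \frac{\chi(1-t)}{2\pi}\,2^{-(\ell-1)\la}\,F(2^{\ell-1}t), \qquad F(s):=\int_0^\infty w^\la\,\widehat\Psi(s-w)\,dw,
\]
and Leibniz reduces everything to controlling derivatives of the profile $F$.

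For \eqref{eq:hlaellnear1} I would show $|F^{(m)}(s)|\lc_{m,N_1}(1+|s|)^{-N_1}$ for all $m, N_1$. The crucial input here is that $\Psi$ vanishes identically on $|x|\le 1$, which by Fourier inversion means all ordinary moments vanish: $\int y^k \widehat\Psi(y)\,dy = (2\pi/i^k)\Psi^{(k)}(0)=0$ for every $k\ge 0$. For $s\gg 1$ I would substitute $y=s-w$ and split at $|y|=s/2$: on $|y|\le s/2$, Taylor-expanding $(s-y)^\la$ around $y=0$ to order $K$ and using the moment cancellation of $\widehat\Psi^{(m)}$ kills all polynomial contributions, leaving an error of size $O(s^{\la-K})$; on $|y|>s/2$, Schwartz decay of $\widehat\Psi^{(m)}$ gives decay $O(s^{-N})$. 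For $s\ll -1$ the support constraint $w\ge 0$ forces $|s-w|\ge|s|$ throughout the integral, and Schwartz decay alone suffices.

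For \eqref{eq:hlaellcancat1-deriv} the plan is to show $F^{(m)}(0)=0$ for $m=0,\ldots,N_\circ$; Taylor expansion then yields $|F^{(m)}(s)|\lc_m|s|^{N_\circ+1-m}$ on $|s|\le 1/2$, a range that contains $\{2^{\ell-1}t:|t|\le 2^{-\ell}\}$. The vanishing is immediate from \eqref{eq:la-canc-Psi}: using the parity $\widehat\Psi^{(m)}(-w)=(-1)^m\widehat\Psi^{(m)}(w)$,
\[
F^{(m)}(0) \;=\; \int_0^\infty w^\la \widehat\Psi^{(m)}(-w)\,dw \;=\; (-1)^m\int_0^\infty w^\la\,\widehat\Psi^{(m)}(w)\,dw \;=\; 0.
\]
Feeding this back through Leibniz in $t$, the prefactor $2^{-(\ell-1)\la}$, the scaling $s=2^{\ell-1}t$, and the $(N_\circ+1-k)$-fold vanishing of $F$ combine to reproduce the claimed factor $2^{-\ell(\la-|\alpha|)}(2^\ell|1-\rho(\xi)|)^{N_\circ+1-|\alpha|}$, up to harmless multiplicative constants.

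The case $\ell=0$ is handled analogously with $\widehat{\Phi_\circ}$ in place of $\widehat\Psi$ and Lemma \ref{lem:cancellations} in place of Corollary \ref{cor:cancellations}; compact support of $h_{\la,0}$ reduces \eqref{eq:hlaellnear1} to the trivial bound $|\partial_\xi^\alpha[h_{\la,0}\circ\rho]|\lc 1$. The main technical obstacle is the delicate interplay between the two cancellation mechanisms: ordinary moment vanishing of $\widehat\Psi$ drives the rapid decay of $F$ at $|s|\to\infty$ in \eqref{eq:hlaellnear1}, while the $\varrho^\la$-weighted cancellation \eqref{eq:la-canc-Psi} forces $F$ to vanish to order $N_\circ+1$ at $s=0$ in \eqref{eq:hlaellcancat1-deriv}. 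The construction of $\Phi_\circ$ and $\Psi$ in Lemma \ref{lem:cancellations} and Corollary \ref{cor:cancellations} was tailored precisely so that both types of cancellation are available simultaneously, the second one holding for \emph{all} admissible $j$ even when $\la$ is an integer, thanks to the subtraction $\Psi(x)=\Phi_\circ(x/2)-\Phi_\circ(x)$ together with the elementary identity \eqref{eq:la-canc-pi}.
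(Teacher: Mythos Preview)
The proposal is correct and takes essentially the same approach as the paper. Both arguments reduce to one dimension via Fa\`a di Bruno, obtain \eqref{eq:hlaellnear1} from the vanishing of all ordinary moments of $\widehat\Psi$ (which the paper phrases tersely as ``repeated integration by parts''), and obtain \eqref{eq:hlaellcancat1-deriv} from the $\varrho^\la$-weighted cancellations of Corollary~\ref{cor:cancellations} via Taylor expansion; your packaging into the profile $F(s)=\int_0^\infty w^\la\,\widehat\Psi(s-w)\,dw$ and the observation $F^{(m)}(0)=0$ for $m\le N_\circ$ is simply a clean reformulation of the paper's explicit Taylor-remainder computation inside the convolution integral.
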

\begin{proof} Repeated integration by parts yields \eqref
{eq:hlaellnear1}.
We use Corollary \ref{cor:cancellations} and Taylor's theorem to compute for $\ell\ge 1$
\begin{align*} &h_{\la,\ell} (\varrho) = \chi(\varrho)\frac{1}{2\pi}\int_{-\infty}^1 (1-u)^\la 2^{\ell}\widehat{\Psi}(2^\ell(\varrho-u) )\ud u
\\
&= \frac{\chi(\varrho)}{2\pi}\int_{-\infty}^1 (1-u)^\la 2^{\ell}
\big [ \widehat{\Psi}(2^\ell(\varrho-u) )
-\sum_{j=0}^{N_\circ} \frac{ (2^\ell(\varrho-1))^j}{j!} \widehat\Psi^{(j)} (2^\ell(1-u))\big]
\ud u
\\
&=(2^\ell (\varrho-1))^{N_\circ+1} \frac{\chi(\varrho) }{2\pi} \,\,\times\\ &\qquad\qquad\int_0^1 \frac{(1-\sigma)^{N_\circ}}{N_\circ!} \int_{-\infty}^1 (1-u)^\la 2^\ell \widehat\Psi^{(N_\circ+1)}(2^\ell(1-u+\sigma (\varrho-1))) \ud u \ud \sigma
\end{align*}
which in turn gives $
|h_{\la,\ell} (\varrho) | \lc 2^{-\ell \la} |2^\ell(1-\varrho) |^{N_\circ+1} $ for $|1-\varrho|\le 2^{-\ell}$. Thus, setting $\varrho=\rho(\xi)$, \eqref{eq:hlaellcancat1-deriv} follows for $\alpha=0$.
A similar calculation follows for $\partial_\varrho^j h_{\la,\ell}$ and then \eqref{eq:hlaellcancat1-deriv} for higher derivatives follows by applications of the multivariate Leibniz rule and the Fa\`a di Bruno's formula.
\end{proof}

Since $\nabla\rho$ is homogeneous of order zero and since $\nabla\rho$ does not vanish on $\partial\Omega$ there are two positive constants $c_0$, $C_0$ such that $C_0\ge 1$ and
\begin{subequations}
\Be\label{eq:nablarho-bds} c_0<|\nabla\rho(\xi)|\le C_0 \text{ for all $\xi\neq 0$.}\Ee Later in the paper it will also be useful to fix a positive integer $\nc$ such that \Be \label{eq:ncircdef} 2^{d+4} C_0< 2^{\nc} .\Ee
\end{subequations}
We next study the properties of the convolution kernels
\begin{equation*}
K_{\lambda,\ell}(x):=\mathcal{F}^{-1}[h_{\lambda,\ell} \circ \rho](x), \qquad \ell \geq 0.
\end{equation*}
The next lemma shows that for $\ell > 0$, the kernels $K_{\lambda, \ell}$ are essentially supported in $\{x: c_0 2^{\ell-2} \le |x|\le C_0 2^{\ell+2} \}$. No curvature assumption is necessary here.
\begin{lemma} \label{lem:error-kernelest} For all $N\in \bbN$, \[|K_{\la,\ell}(x) |\lc_N\begin{cases} |x|^{-N} &\text{ for $|x|\ge 2^{\ell+2} C_0$} \\
2^{-\ell N } &\text{ for $|x|\le 2^{\ell-2} c_0$.}
\end{cases} \]
\end{lemma}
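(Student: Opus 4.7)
The plan is to analyze the oscillatory integral
\[
K_{\la,\ell}(x) = (2\pi)^{-d}\int_{\R^d} e^{i\inn x\xi}\, h_{\la,\ell}(\rho(\xi))\, \ud\xi
\]
by exploiting the oscillatory structure of $h_{\la,\ell}$ itself. A naive integration by parts using only the phase $\inn x\xi$, combined with the pointwise derivative bound \eqref{eq:hlaellnear1}, gives only $|K_{\la,\ell}(x)|\lc_M |x|^{-M}\, 2^{\ell(M-\la-1)}$, which is insufficient to yield $|x|^{-N}$ decay once $N>\la+1$. To get around this I use the fact that, for $\ell\ge 1$, the one-dimensional Fourier transform of $h_{\la,\ell}$ is concentrated in $|s|\sim 2^\ell$. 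Indeed, from \eqref{eq:hlaell} we have $h_{\la,\ell}=\chi\cdot [(1-\cdot)_+^\la\ast F_\ell]$ with $F_\ell(v)=(2\pi)^{-1}2^{\ell-1}\wh\Psi(2^{\ell-1}v)$, so $\wh{F_\ell}(s)=\Psi(s/2^{\ell-1})$ is supported in $|s|\in[2^{\ell-2},2^\ell]$. Writing $\wh{h_{\la,\ell}}=\wh\chi\ast(\wh{m_\la}\cdot \wh{F_\ell})$ with $m_\la(u)=(1-u)_+^\la$ (so $|\wh{m_\la}(s)|\lc (1+|s|)^{-\la-1}$) and using Schwartz decay of $\wh\chi$, one obtains rapid off-diagonal decay of $\wh{h_{\la,\ell}}(s)$ away from $|s|\sim 2^\ell$, both as $|s|\to\infty$ and as $|s|\to 0$ (in the latter case using the separation between $s$ and $\supp\wh{F_\ell}$), together with $\|\wh{h_{\la,\ell}}\|_{L^1}\lc 2^{-\ell\la}$. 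The case $\ell=0$ is dealt with separately: there $\wh{F_0}\propto \Phi_\circ$ is compactly supported, so $h_{\la,0}\in \Coi(\R)$ and $K_{\la,0}$ is a Schwartz function on $\R^d$ by a direct argument.

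For $\ell\ge 1$, I would insert an enlarged cutoff $\tilde\chi\in\Coi(\R)$ with $\tilde\chi\equiv 1$ on $\supp\chi$ and invert in $\varrho$ to write
\[
K_{\la,\ell}(x) = \frac{1}{(2\pi)^{d+1}}\int_\R \wh{h_{\la,\ell}}(s)\, J(x,s)\,\ud s,\qquad J(x,s):=\int_{\R^d}\tilde\chi(\rho(\xi))\,e^{i(\inn x\xi+s\rho(\xi))}\,\ud\xi.
\]
The phase $\Phi(\xi)=\inn x\xi+s\rho(\xi)$ has gradient $x+s\nabla\rho(\xi)$, with $|\nabla\rho|\in[c_0,C_0]$ on $\supp(\tilde\chi\circ\rho)$ by \eqref{eq:nablarho-bds}. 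I would then analyze $J$ by non-stationary phase after dyadically splitting the $s$-integral. In the regime $|x|\ge 2^{\ell+2}C_0$: for $|s|\le |x|/(2C_0)$ one has $|\nabla\Phi|\ge |x|/2$ and iterated integration by parts via $L^*=i|\nabla\Phi|^{-2}\inn{\nabla\Phi}{\nabla}$ gives $|J(x,s)|\lc_M |x|^{-M}$; for $|s|\ge 2|x|/c_0$ the symmetric bound $|J|\lc_M |s|^{-M}$ holds; in the intermediate band $|x|/(2C_0)\le|s|\le 2|x|/c_0$ one combines the trivial bound $|J|\lc 1$ with the rapid decay of $\wh{h_{\la,\ell}}(s)$, which is strong here since $|s|\ge 2^{\ell+1}\gg 2^\ell$. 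Summing the three contributions and using $\|\wh{h_{\la,\ell}}\|_{L^1}\lc 2^{-\ell\la}\le 1$ (valid for $\la\ge 0$) yields $|K_{\la,\ell}(x)|\lc_N |x|^{-N}$. Symmetrically, for $|x|\le 2^{\ell-2}c_0$: in the main region $|s|\ge 2^{\ell-1}$ one has $|\nabla\Phi|\ge |s|c_0/2\gtrsim 2^\ell$, hence $|J(x,s)|\lc_M 2^{-\ell M}$; the low-frequency tail $|s|\le 2^{\ell-1}$ is handled by the rapid decay of $\wh{h_{\la,\ell}}$ there, inherited from the vanishing of $\wh{F_\ell}$ near the origin. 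Combining gives $|K_{\la,\ell}(x)|\lc_N 2^{-\ell N}$.

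The main technical obstacle is verifying cleanly the iterated non-stationary phase bound on $J(x,s)$. Since $\Phi$ is a sum of two pieces with potentially different magnitudes, successive applications of $L^*$ produce factors involving derivatives of $|\nabla\Phi|^{-2}$, which scale like $|s|/|\nabla\Phi|^2$. The key observation that makes each iteration gain a full factor of $1/|\nabla\Phi|$ is that in each of our regimes the dominant piece of $\nabla\Phi$ is either $x$ or $s\nabla\rho$, and in both cases the resulting inequality $|s|\lc |\nabla\Phi|$ (with constants depending on $c_0,C_0$) ensures that derivatives of the amplitude do not spoil the gain. The remaining steps are routine dyadic bookkeeping in the $s$-variable and the estimation of tails of $\wh{h_{\la,\ell}}$, based on the Schwartz convolution identity $\wh{h_{\la,\ell}}=\wh\chi\ast(\wh{m_\la}\cdot\wh{F_\ell})$.
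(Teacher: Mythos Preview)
Your approach is correct and essentially the same as the paper's: both express $K_{\la,\ell}(x)$ as an integral in $s$ against the oscillatory integral $J(x,s)=\int\tilde\chi(\rho(\xi))e^{i(\inn x\xi+s\rho(\xi))}\,\ud\xi$ and then apply non-stationary phase in $\xi$, exploiting the lower bound on $|x+s\nabla\rho(\xi)|$ coming from \eqref{eq:nablarho-bds}. The only difference is packaging: the paper keeps $\chi(\varrho)$ outside so that the $s$-integral has exact compact support $|s|\sim 2^\ell$ (paying with a separate treatment of the non-integrable tail of $u\mapsto(1-u)_+^\la$, its term $m_{\la,\ell,2}$, handled by integration by parts in $u$), whereas you fold everything into $\widehat{h_{\la,\ell}}=\widehat\chi*(\widehat{m_\la}\cdot\widehat{F_\ell})$ and carry Schwartz tails in $s$ instead; one small point to tighten is that $\supp\widehat{F_\ell}$ reaches down to $|s|=2^{\ell-2}$ rather than $2^{\ell-1}$, so your split in the small-$|x|$ regime should be taken at $|s|\gtrless 2|x|/c_0$ (which is harmless up to the same constants that appear in the paper's version).
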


\begin{proof} The statement for $\ell=0$ follows from integration by parts. We thus assume $\ell\ge 1$ in what follows. We use the definition $h_{\la,\ell}\circ\rho$ to write
\[h_{\la,\ell}(\rho(\xi) )= m_{\la,\ell,1} (\xi) + m_{\la,\ell,2} (\xi) \] where
$m_{\la,\ell,1 } (\xi)=
\chi(\rho(\xi)) \frac{1}{2\pi} \iint \chi_1(u) (1-u)_+^\la \Psi(2^{-\ell}s) e^{is(\rho(\xi) -u) }\ud s\ud u.$
We have
\begin{multline} \F^{-1}[m_{\la,\ell,1}] (x) = \\ \frac{1}{(2\pi)^{d+1}} \iint \chi_1(u) (1-u)_+^\la \Psi(2^{-\ell}s) e^{-isu} \int \chi(\rho(\xi) )e^{is\rho(\xi) +i\inn{x}{\xi} } \ud \xi \ud s\ud u. \end{multline}
Analyzing the gradient of the phase function in the inner $\xi$ integral we get for $2^{\ell-1}<s<2^{\ell+1} $
\[ |s\nabla\rho(\xi) +x|\ge \begin{cases} |x|/2 \qquad &\text{ for } |x| \ge 2^{\ell+2} C_0
\\ 2^{-\ell-2} c_0 &\text{ for }|x|\le 2^{\ell-2} c_0 \end{cases}
\]
and an integration by parts in $\xi$ shows that for $2^{\ell-1}<s<2^{\ell+1} $
\Be \label{eq:xi-intbyparts}\Big| \int \chi(\rho(\xi) )e^{is\rho(\xi) +i\inn{x}{\xi} } \ud \xi \Big|
\lc_N\begin{cases} |x|^{-N-1} &\text{ for $|x|\ge 2^{\ell+2} C_0$} \\
2^{-\ell (N+1) } &\text{ for $|x|\le 2^{\ell-2} c_0$}
\end{cases}
\Ee
which after a trivial integration in $s$ and $u$ implies the desired bounds on $\cF^{-1} [m_{\la,\ell,1}](x)$.

We now examine $\cF^{-1}[m_{\la,\ell,2}]$ where
$m_{\la,\ell,2}(\xi) =h_{\la,\ell}(\rho(\xi)) -m_{\la,\ell,1} (\xi)$. The definition of of $m_{\la,\ell,2}$ involves a one-dimensional Fourier transform of $u\mapsto (1-\chi_1(u))(1-u)^\la_+ $, where the latter is supported in $(-\infty, 4/5)$. We perform a dyadic decomposition in the negative $u$ variables. Let $\eta_0\in C^\infty_c(\bbR)$ such that $\eta_0$ is supported in $(-5/6, 5/6)$ and $\eta_0(u)=1$ for $u\in (-4/5,4/5)$ and let, for $k\ge 1$, $\eta_k(u)=\eta_0(2^{-k}u)-\eta_0(2^{1-k}u)$. We then have
$m_{\la,\ell,2}=\sum_{k=0}^\infty m_{\la,\ell,2,k}$
where by integration by parts for all $N_1\ge 0$,
\begin{align*} m_{\la,\ell,2,k} (\xi) &=
\frac{\chi(\rho(\xi))}{2\pi} \iint \eta_k(u) (1-\chi_1(u)) (1-u)_+^\la \Psi(2^{-\ell}s) e^{is(\rho(\xi) -u) }\ud s\ud u
\\&=
\frac{\chi(\rho(\xi))}{2\pi} \iint \partial_u^{N_1}\big[ \eta_k(u)(1-\chi_1(u)) (1-u)_+^\la\big] \frac{\Psi(2^{-\ell}s) }{(is)^{N_1}} e^{is(\rho(\xi) -u) }\ud s\ud u
\end{align*} and the sum in $k$ converges rapidly in view of the estimate
\[|m_{\la,\ell,2,k} (\xi)|\lc 2^{k(\la-N_1)} 2^{\ell(1-N_1) }.\] Note that because of the cutoff $\chi(\rho(\xi))$ the same bound immediately holds for
$\|\cF^{-1}[ m_{\la,\ell,2,k} ] \|_\infty $. We will apply this with $N_1\gg N+\la$. After summing in $k$ we get a satisfactory bound for
$|x|\le C_0 2^{\ell+3}$. For $|x|\ge C_0 2^{\ell+2}$ we again integrate by parts in $\xi$ (\cf. \eqref{eq:xi-intbyparts}) and obtain the bound
$|\cF^{-1} [m_{\la,\ell,2,k}](x)|\lc 2^{k(\la-N_1) }2^{\ell(1-N_1) }(2^\ell |x|)^{-N-1} $ which again can be summed in $k$. Altogether we get
\Be\label{eq:FTofhlaell2} |\cF^{-1} [m_{\la,\ell,2}](x)| \lc_{N} 2^{-\ell N} (1+ 2^{-\ell}|x|)^{-N} \Ee for all $x\in \bbR^d$, which completes the proof.
\end{proof}

We get sharp estimates for the region $|x|\approx 2^\ell$ since $\partial \Om$ has nonvanishing Gaussian curvature everywhere.

\begin{lemma}\label{lem:main-kernelest}
$\|K_{\la,\ell}\|_\infty \lc 2^{-\ell( \la+\frac{d+1}2) }.$
\end{lemma}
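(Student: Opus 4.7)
The plan is to combine the essential-support information in Lemma \ref{lem:error-kernelest} with the classical stationary-phase decay of the Fourier transform of a smooth density on $\partial\Om$; the latter is where the nonvanishing Gaussian curvature hypothesis on $\partial\Om$ enters.

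First I would reduce to a dyadic annulus. By Lemma \ref{lem:error-kernelest} with $N$ chosen larger than $\la + (d+1)/2$, one already has $|K_{\la,\ell}(x)| \lc 2^{-\ell(\la+(d+1)/2)}$ whenever $|x| \le c_0 2^{\ell-2}$ or $|x| \ge C_0 2^{\ell+2}$. Thus it suffices to establish the claim on the annulus $\cA_\ell := \{x \in \bbR^d : c_0 2^{\ell-2} \le |x| \le C_0 2^{\ell+2}\}$.

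For $x \in \cA_\ell$ I would switch to polar coordinates adapted to $\rho$. Set $\Sigma := \partial\Om = \{\rho = 1\}$ and write $\xi = t\omega$ with $t > 0$, $\omega \in \Sigma$. The coarea formula together with $|\nabla\rho| \sim 1$ on $\Sigma$ from \eqref{eq:nablarho-bds} produces a smooth positive measure $\mu = |\nabla\rho|^{-1} \mathrm{d}\sigma_\Sigma$ on $\Sigma$ such that
\[
K_{\la,\ell}(x) = (2\pi)^{-d} \int_0^\infty t^{d-1} h_{\la,\ell}(t)\, \wh{\mu}(-tx)\, \ud t.
\]
Because $\Sigma$ has nonvanishing Gaussian curvature, the classical stationary-phase estimate gives $|\wh{\mu}(\eta)| \lc (1+|\eta|)^{-(d-1)/2}$; for $t \in \supp\chi \subset (1/2,2)$ and $x \in \cA_\ell$, this yields the gain $|\wh{\mu}(-tx)| \lc 2^{-\ell(d-1)/2}$. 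Combined with the $L^1$ bound
\[
\int_0^\infty t^{d-1} |h_{\la,\ell}(t)|\, \ud t \lc 2^{-\ell(\la+1)},
\]
obtained by integrating the pointwise estimate of Lemma \ref{lem:ptwisebdhellla} with $\alpha = 0$, one concludes $|K_{\la,\ell}(x)| \lc 2^{-\ell(\la + (d+1)/2)}$ on $\cA_\ell$, as desired.

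No serious obstacle arises: the only nontrivial input is the curvature-induced surface-measure decay of $\wh\mu$, and the smoothness and boundedness of the density $|\nabla\rho|^{-1}$ on $\Sigma$ are immediate from \eqref{eq:nablarho-bds}.
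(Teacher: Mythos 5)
Your proof is correct, and it reaches the stated bound by a more economical route than the paper. Both arguments share the same skeleton: reduce to the annulus $|x|\approx 2^\ell$ via Lemma \ref{lem:error-kernelest}, pass to $\rho$-polar coordinates (your density $|\nabla\rho|^{-1}\,d\sigma$ agrees with the paper's $\inn{\fn(\xi')}{\xi'}\,d\sigma$ by Euler's identity), and invoke the curvature of $\partial\Omega$ through the stationary-phase decay of the surface measure. The difference lies in how the radial integral is handled. The paper reopens the convolution definition of $h_{\la,\ell}$, carries the extra $s$- and $u$-integrals, extracts the factor $s^{-\la-1}$, and then integrates by parts in $\varrho$ against the oscillation $e^{is(\varrho-1)}$ before integrating in $s$; you instead import the already-proved pointwise bound of Lemma \ref{lem:ptwisebdhellla} with $\alpha=0$, which encodes exactly that $h_{\la,\ell}$ has height $O(2^{-\ell\la})$ and effective width $O(2^{-\ell})$ around $\varrho=1$, and then simply take absolute values in $t$ after using $|\wh\mu(-tx)|\lc 2^{-\ell(d-1)/2}$. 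The numerology $2^{-\ell\la}\cdot 2^{-\ell}\cdot 2^{-\ell(d-1)/2}=2^{-\ell(\la+\frac{d+1}{2})}$ is identical, and no oscillation in the radial variable is needed for the $L^\infty$ bound, so your shortcut loses nothing here; the paper's more explicit oscillatory-integral bookkeeping retains finer structural information (the stationary points $\xi'_\pm(x)$ and the radial oscillation) that is simply not required for this statement. One cosmetic remark: your coarea reduction implicitly uses the degree-$0$ homogeneity of $\nabla\rho$ to get the clean $t^{d-1}\,dt\,d\mu(\omega)$ factorization, which is worth stating but is immediate from the hypotheses on $\rho$.
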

\begin{proof} By Lemma \ref{lem:error-kernelest}
it suffices to prove the bound for $|x|\approx 2^\ell$.
We write the Fourier integral in $\rho$-polar coordinates $\xi=\varrho \xi'$ with $\xi'\in \partial \Omega$, $\ud \mu(\xi')=\inn{\fn(\xi')}{\xi'} d\sigma(\xi')$, where $\fn$ is the outer normal at $\xi'\in \partial \Om$.
We obtain
\begin{align*} (2\pi)^{d+1} K_{\la,\ell} (x)= & \iint (1-u)_+^\la \Psi(2^{-\ell}s) e^{-isu} \iint \varrho^{d-1} \chi(\varrho) e^{is\varrho +i\inn{x}{\varrho \xi'} } \ud \mu(\xi') \ud \varrho\ud s\ud u\\=
&c\int\Psi(2^{-\ell }s)s^{-\la-1} \int \varrho^{d-1} \chi(\varrho) e^{is(\varrho-1)} \int_{\partial \Om} e^{i\varrho \inn{x}{\xi'} } \ud\mu(\xi') \ud\varrho\ud s.
\end{align*}
Since $\partial \Omega$ has nonvanishing Gaussian curvature, the inner integral can be written, by the method of stationary phase, as a sum of two expressions of the form $
c_\pm e^{i \varrho \inn{x}{\xi'_\pm(x) }} a_\pm (\varrho,x)$ where $a_\pm$ are smooth and, together with their derivatives, satisfy the bound $O((1+|x|)^{-\frac{d-1}2})$. The points $\xi'_\pm(x)$ are the two unique points on $\partial \Om$ where $x$ is normal to $\partial\Om$.
Subsequent integration by parts in $\rho$ yields for $|s|\approx |x|\approx 2^\ell$
\[\Big| \int \varrho^{d-1} \chi(\varrho) e^{is(\varrho-1)} \int_{\partial\Om}e^{i\varrho \inn{x}{\xi'} } \ud\mu(\xi') \ud\varrho \Big|\lc 2^{-\ell(d+1)/2}
\] and after integrating in $s$ obtain the asserted bound.
\end{proof}

\subsection*{Stein--Tomas type estimates} For the proof of Theorem \ref{thm:STendpt} we need the following consequences of the Stein--Tomas restriction theorem. Note that \eqref{eq:mj-ST-est} corresponds to the $\VV(p,2)$ condition mentioned in (i), \S\ref{sec:known}.

\begin{lemma}\label{lem:ST-style} Let $1\le p\le \frac{2(d+1)}{d+3} $ and let $M$ be an integer with $M>d (\frac 1p-\frac 12)$.
Let $s \mapsto \vth_j(s)$ satisfy, for $\nu=0,1,\dots, M$,
\begin{equation}\label{eq:vartheta_j assumption}
\Big| \Big(\frac{\ud}{\ud s} \Big)^{ \nu} \big(\vth_j(s)) \Big|\le (1+|s|)^{-M} \,.
\end{equation}
Let $m_j(\xi)=\vth_j(2^j(1-\rho(\xi))$.
Then for each $j \geq 0$
\Be\label{eq:thetaj-ST-est} \Big \|\sum_{\substack{Q\in \fD_j}} 2^{j\frac{d+1}{2}} m_j(D) [f_{Q}\bbone_Q] \Big\|_2\lc \Big(\sum_{Q\in \fD_j} |Q| \|f_{Q} \|_{p}^{2}\Big)^{ \frac 12}.
\Ee
If, in addition, the functions $\vth_j$ are supported in $(1/4,4)$ then
\Be\label{eq:mj-ST-est} \Big \|\sum_{j\ge 0} \sum_{\substack{Q\in \fD_j}} 2^{j\frac{d+1}{2}} m_j(D) [f_{Q}\bbone_Q] \Big\|_2\lc \Big(\sum_{Q\in \fD} |Q| \|f_{Q} \|_{p}^{2}\Big)^{ \frac 12}.
\Ee

\end{lemma}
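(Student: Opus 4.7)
The plan is in three steps: (i) establish a Stein--Tomas type $L^p \to L^2$ bound for the single operator $T_j:=2^{j(d+1)/2}m_j(D)$; (ii) upgrade to the cube-indexed form \eqref{eq:thetaj-ST-est} via spatial almost-orthogonality coming from kernel localization; (iii) sum over $j$ using Fourier-side orthogonality.

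Step 1 (single-function bound $\|T_jf\|_2\lesssim 2^{jd/2}\|f\|_p$). In $\rho$-polar coordinates,
\[
\|m_j(D)f\|_2^2=\int_0^\infty |\vartheta_j(2^j(1-r))|^2 r^{d-1}\int_{\partial\Omega}|\widehat f(r\xi')|^2\,d\mu(\xi')\,dr.
\]
A scaling/Stein--Tomas argument gives $\int_{\partial\Omega}|\widehat f(r\xi')|^2 d\mu\lesssim r^{2d/p-2d}\|f\|_p^2$ for $1\le p\le 2(d+1)/(d+3)$. After the change of variables $u=2^j(1-r)$, the integral $\int_{-\infty}^{2^j}|\vartheta_j(u)|^2(1-u/2^j)^{2d/p-d-1}\,du$ is bounded as follows: on $|u|\le 2^{j-1}$ the geometric factor is uniformly bounded since $2d/p-d-1\ge 0$, while the tail as $u\to-\infty$ contributes $O(2^{j(1-2M)})$; this is $O(1)$ precisely under the hypothesis $M>d(1/p-1/2)$, which gives integrability of $u\mapsto (1+|u|)^{-2M}|u|^{2d/p-d-1}$ at infinity. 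Multiplying by $2^{j(d+1)}$ yields the claim.

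Step 2 (\eqref{eq:thetaj-ST-est} via kernel localization). The kernel $K_{T_j}$ satisfies $\|K_{T_j}\|_\infty\lesssim 1$ (stationary phase as in Lemma \ref{lem:main-kernelest} with $\lambda=0$) and has rapid decay $|K_{T_j}(x)|\lesssim (1+2^{-j}|x|)^{-N}$ for $|x|\gtrsim 2^j$, proved by $N$-fold integration by parts using the $C^M$ bounds on $\vartheta_j$. Let $Q^*$ be the concentric enlargement of $Q$ with sidelength $C_0\cdot 2^j$ for a large enough constant $C_0$; the family $\{Q^*\}_{Q\in\fD_j}$ has bounded overlap. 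Split $T_j[f_Q\bbone_Q]=\bbone_{Q^*}T_j[f_Q\bbone_Q]+\bbone_{(Q^*)^c}T_j[f_Q\bbone_Q]$. For the main term, pointwise Cauchy--Schwarz gives
\[
\Big\|\sum_{Q\in\fD_j}\bbone_{Q^*}T_j[f_Q\bbone_Q]\Big\|_2^2\lesssim \sum_Q\|T_j[f_Q\bbone_Q]\|_2^2\lesssim \sum_Q|Q|\|f_Q\|_p^2
\]
by Step 1 applied to each summand. The tail contribution is controlled via a Schur-test argument that couples the rapid kernel decay with the single-function bound from Step 1; this handles the off-diagonal interactions $\langle T_j[f_Q\bbone_Q],T_j[f_{Q'}\bbone_{Q'}]\rangle$ for well-separated cubes $Q,Q'$.

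Step 3 (scale-summation for \eqref{eq:mj-ST-est}). Under the additional hypothesis $\supp(\vartheta_j)\subseteq(1/4,4)$, the multipliers $m_j$ are supported on the annuli $\{\xi:|1-\rho(\xi)|\in(2^{-j-2},2^{-j+2})\}$, which are pairwise disjoint for $|j-j'|\ge 5$. Hence Plancherel gives
\[
\Big\|\sum_jT_jG_j\Big\|_2^2\lesssim \sum_j\|T_jG_j\|_2^2,\qquad G_j:=\sum_{Q\in\fD_j}f_Q\bbone_Q,
\]
and applying \eqref{eq:thetaj-ST-est} to each summand completes the proof.

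The most delicate point is Step 1: the noncompact support of $\vartheta_j$ forces the scaling integral to run over the full family of dilated hypersurfaces $r\partial\Omega$, and the condition $M>d(1/p-1/2)$ enters precisely as the integrability threshold at infinity for this scaling integral. The tail estimate in Step 2 is technical but routine, as the kernel decay can be made arbitrarily strong by taking enough derivatives of $\vartheta_j$ (which is available under our $C^M$ hypothesis).
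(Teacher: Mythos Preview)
Your overall three-step strategy matches the standard Fefferman--Stein approach that the paper cites (the paper omits the proof, attributing it to \cite{FeffermanBR73} and \cite{Seeger-Indiana}). Step~1 is correct, and your identification of $M>d(\tfrac1p-\tfrac12)$ as the integrability threshold for the scaling integral is exactly right. Step~3 is also correct.

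However, Step~2 has a genuine gap. First, a minor point: the claim $\|K_{T_j}\|_\infty\lesssim 1$ is false. For $|x|\ll 2^j$ the trivial bound $|\widecheck{m_j}(x)|\lesssim \|m_j\|_1\approx 2^{-j}$ gives only $|K_{T_j}(x)|\lesssim 2^{j(d-1)/2}$; Lemma~\ref{lem:main-kernelest} does not apply because the multipliers $h_{\lambda,\ell}$ there are constructed so that their kernels essentially vanish for $|x|\ll 2^\ell$, a property a generic $\vartheta_j$ lacks. Fortunately you never use this claim.

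The real problem is the tail. Your kernel decay $|K_{T_j}(x)|\lesssim (2^{-j}|x|)^{-N}$ for $|x|\gtrsim 2^j$ is correct (with $N$ limited by $M$), but a Schur-type argument based on it necessarily passes through $\|f_Q\|_1$: for $x\notin Q^*$ one has $|T_j[f_Q\bbone_Q](x)|\lesssim (2^{-j}\dist(x,Q))^{-N}\|f_Q\|_1$, and converting $\|f_Q\|_1\le |Q|^{1-1/p}\|f_Q\|_p$ introduces an irrecoverable loss of $|Q|^{1-1/p}$ per cube. A direct computation shows the resulting tail bound is $(\sum_Q|Q|^{3-2/p}\|f_Q\|_p^2)^{1/2}$ rather than $(\sum_Q|Q|\|f_Q\|_p^2)^{1/2}$, which only matches when $p=1$. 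Taking the minimum with the Cauchy--Schwarz bound $\|T_jf_Q\|_2\|T_jf_{Q'}\|_2$ does not rescue this either: for $p=\tfrac{2(d+1)}{d+3}$ the crossover still leaves a factor growing like $2^{cj}$ after summing over separated pairs. Your phrase ``couples the rapid kernel decay with the single-function bound'' gestures at the right idea but does not supply the mechanism. What is actually needed---and is precisely the ``refinement in \cite{Seeger-Indiana}'' the paper invokes---is to spatially decompose the kernel of $T_j$ (or of $T_j^*T_j$) into pieces supported in annuli $\{|x|\approx 2^{j+m}\}$ and show that each localized piece \emph{still} satisfies the Stein--Tomas $L^p\to L^2$ (resp.\ $L^p\to L^{p'}$) bound, now with an extra gain $2^{-m\varepsilon}$. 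This requires re-entering the Stein--Tomas machinery for the localized multipliers rather than relying on pointwise kernel size.
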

We omit the proof; it
relies on a standard argument by Fefferman and Stein \cite{FeffermanBR73}, with a refinement in \cite{Seeger-Indiana}.

\section{The main estimates}\label{sec:Mainestimates}

At the heart of the matter of the proof of Theorem \ref{thm:blackbox-sparse} lie certain estimates in Proposition \ref{CZ-prop} below in terms of collections of functions stemming from the Calder\'on--Zygmund decomposition.
To prove these estimates it is convenient to introduce a family of bilinear operators which allow an abstract formulation that is a priori unrelated to the Calder\'on--Zygmund decomposition.

In the following let $\fQ\subset \fD_{\ge 0}$.
On the set $\fQ$ we will consider the atomic measure given by $\mu( \{Q\}) =|Q|;$ i.e. for each subset $\fE\subset \fQ$ we have
\Be \label{eq:mudef} \mu( \fE) = \sum_{j\ge 0} 2^{jd} \#\fE_j \Ee where again $\fE_j$ is the subset of $\fE$ consisting of cubes of sidelength $2^j$. This choice of measure is natural since in the special case where $\fE$ is a {\it disjoint} collection of dyadic cubes $\mu(\fE)$ is just the Lebesgue measure of the union of the $Q$ in $\fE$.
Fix $\la$ and let $h_{\la, \ell}$ be defined as in \eqref{eq:hlaells}. Set
\Be\label{eq:Aldef} {A_{\la,\ell} f}= 2^{\ell(\la+\frac{d+1}{2}) } h_{\la,\ell}(\rho(D)) f.\Ee

For $\fQ\subset\fD $ and functions $\beta:\fQ\to \bbC$ we denote by $\ell^r(\fQ,\mu)$ the space of all $\beta$ such that
$\|\beta\|_{\ell^r(\mu)} =(\sum_{Q\in \fQ} |\beta(Q)|^r |Q|)^{1/r}$ and by $\ell^{r,1}(\fQ,\mu)$ the corresponding Lorentz space.
We also consider families of $L^p(\bbR^d)$ functions $F=\{F_Q\}_{Q\in \fQ} $ and set $\|F\|_{\ell^\infty (L^p)} =\sup_{Q\in \fQ} \| F_Q\|_p$. For any integer $s \geq 0$, define the
bilinear operators $\Xi_{s,\fQ} $ acting on $\ell^\infty(L^p(\bbR^d))\times \ell^{r,1}(\mu)$ by
\Be\label{eq:Xidef} \Xi_{s,
\fQ} [F,\beta] := \sum_{\ell\ge s} \sum_{Q\in \fQ_{\ell-s}} \beta(Q) A_{\la,\ell} [F_Q \bbone_Q].\Ee
The definition of $\Xi_{s,\fQ}$ depends on $\la$ via $A_{\la,\ell}$ in \eqref{eq:Aldef} but the operators $A_{\la,\ell}$ satisfy bounds that are uniform in $\la$ when $\la$ varies over a compact set, and this will also hold for the $\Xi_{s,\fQ} $. In analogy with the setup in \cite{HNS}, the normalization chosen in \eqref{eq:Aldef} is advantageous for standard interpolation arguments. To shorten the notation we use the following.

\begin{definition}
Let $\hypo$ denote the statement that $\VV(p,r)$ holds for all $p\in[\frac{2(d+1)}{d+3},p_\circ)$ and $r\in [p, r_*(p,p_\circ, r_\circ)$.
\end{definition}

Note that these correspond exactly to the hypothesis in Theorem \ref{thm:blackbox-sparse}.

\begin{thm} \label{thm:Xi}
Let $d\ge 2$, $\frac{2(d+1)}{d+3} \le p_\circ<\frac{2d}{d+1}$, $p_\circ \leq r_\circ \leq \frac{d-1}{d+1}p_\circ'$ and assume that $\hypo$
holds.
Then for $1\le p<p_\circ$ and
$p<r<\rstar$
there is $\eps=\eps(p,r)>0$ such that for all $s \geq 0$ and collections of disjoint cubes $\fQ\subset\fD_{\ge 0}$,
\[ \|\Xi_{s,\fQ} [F,\beta] \|_{L^r} \lc 2^{s(\frac dp-\eps)} \|\beta\|_{\ell^{r,1}(\mu)} \|F\|_{\ell^\infty(L^p)}. \]
\end{thm}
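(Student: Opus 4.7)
The strategy is bilinear real interpolation combining the hypothesis $\hypo$ with the kernel-based estimates of Section \ref{sec:decomp}.  First I would rewrite
\[ \Xi_{s,\fQ}[F,\beta]=\sum_{\ell\ge s} A_{\lambda,\ell}\Big[\sum_{\tilde Q\in \fD_\ell} g_{\tilde Q}\Big], \qquad g_{\tilde Q}=\sum_{\substack{Q\in\fQ_{\ell-s}\\ Q\subset \tilde Q}}\beta(Q)F_Q\bbone_Q, \]
grouping each $Q\in \fQ_{\ell-s}$ under its unique ancestor $\tilde Q\in \fD_\ell$, so that the support of $g_{\tilde Q}$ and the essential scale of $A_{\lambda,\ell}$ match.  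Each $\tilde Q$ has at most $2^{sd}$ children and satisfies $|\tilde Q|=2^{sd}|Q|$.

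The natural strong-type bound then comes from applying $\VV(p,r)$ to the family $\{g_{\tilde Q}\}_{\tilde Q\in \fD}$:
\[ \|\Xi_{s,\fQ}[F,\beta]\|_{L^r}\lesssim \Big(\sum_{\tilde Q\in \fD}|\tilde Q|\,\|g_{\tilde Q}\|_{p}^{\,r}\Big)^{1/r}. \]
Using disjointness of $\{Q\subset \tilde Q\}$, the pointwise estimate $\|F_Q\|_p\le \|F\|_{\ell^\infty(L^p)}$, the elementary $\ell^p\hookrightarrow \ell^r$ embedding on the at most $2^{sd}$ descendants (which costs $2^{sd(1/p-1/r)}$), and the scaling $|\tilde Q|=2^{sd}|Q|$, one arrives at the baseline estimate
\[ \|\Xi_{s,\fQ}[F,\beta]\|_{L^r}\lesssim 2^{sd/p}\,\|\beta\|_{\ell^r(\mu)}\,\|F\|_{\ell^\infty(L^p)} \]
uniformly for $p\in[\tfrac{2(d+1)}{d+3},p_\circ)$ and $r\in [p,r_*)$.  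This recovers the correct $s$-exponent $d/p$ but with no strict $\varepsilon$ improvement.

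To extract the $\varepsilon$, I would pair the above with a second, kernel-based endpoint estimate carrying a strictly smaller $s$-exponent at an off-diagonal pair $(p,r')$ with $p<r'<r_*$.  Lemmas \ref{lem:error-kernelest} and \ref{lem:main-kernelest} place the kernel of $A_{\lambda,\ell}$ essentially in the annulus $\{|x|\sim 2^\ell\}$ and give $\|A_{\lambda,\ell}\|_{L^1\to L^\infty}\lesssim 1$; the supports of $A_{\lambda,\ell}[F_Q\bbone_Q]$ for $Q\in\fQ_{\ell-s}$ therefore overlap with bounded multiplicity $\lesssim 2^{sd}$.  A Young/support computation turns this into an $\ell^{r'}(\mu)\to L^{r'}$ bound whose $s$-exponent is governed by the support-measure factor $2^{sd/r'}<2^{sd/p}$, with the scale mismatch between $|Q|$ and $2^\ell$ balanced against the $\VV$ hypothesis at a nearby parameter.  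Bilinear real (Marcinkiewicz) interpolation between the $\VV$-based bound and this second bound, carried out via the level-set decomposition $\beta=\sum_k \beta\bbone_{\{|\beta|\sim 2^k\}}$ with $\|\beta\|_{\ell^{r,1}(\mu)}\approx \sum_k 2^k\,\mu(\{|\beta|\sim 2^k\})^{1/r}$, then produces the target $\ell^{r,1}(\mu)\to L^r$ estimate with growth $2^{s(d/p-\varepsilon)}$ for every $r\in (p,r_*)$.  Finally, to extend to $p\in [1,\tfrac{2(d+1)}{d+3})$, where $\VV$ is not part of $\hypo$, I interpolate once more in $p$ against a crude $p=1$ bound using only the kernel bounds of Section \ref{sec:decomp}.

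The main obstacle will be organising the second endpoint bound so that the $s$-exponent is genuinely smaller than $d/p$ at some interior $r'\in (p,r_*)$: the direct support/size argument introduces $\ell$-dependent factors that must be absorbed against the vector-valued hypothesis before summation in $\ell$, and the bilinear interpolation must then be executed carefully so that the saved power of $2^s$ is transferred to the Lorentz norm $\ell^{r,1}(\mu)$ on $\beta$ rather than being dissipated into a weaker conclusion on $L^r$.
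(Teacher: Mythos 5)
Your first two steps are sound and essentially match the paper: the regrouping under ancestors and the application of $\VV(p,r)$ reproduce Proposition \ref{prop:p=p0} (via Lemma \ref{lem:conseq-of-hyp}; note that passing from normalized bumps $\chi(2^\ell(1-\varrho))$ to $2^{\ell\la}h_{\la,\ell}$ is not automatic and uses the moment cancellation of Corollary \ref{cor:cancellations} to sum the pieces supported in $|1-\varrho|\ll 2^{-\ell}$), and your level-set decomposition of $\beta$ is exactly how the paper converts a linear bound with factor $\mu(\fQ)^{1/r}$ into the $\ell^{r,1}(\mu)$ statement. The genuine gap is in the second ingredient, which is where the entire $\varepsilon$-gain must come from. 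Your proposed source --- a ``Young/support computation'' at an off-diagonal pair $(p,r')$ inside the $\VV$ range, using only $\|K_\ell\|_\infty\lesssim 1$, the annular support $|x|\sim 2^\ell$, and bounded overlap --- does not yield a smaller $s$-exponent: a size/support estimate discards all oscillation, and the resulting single-scale constants grow in $\ell$ (this is precisely the ``scale mismatch'' you flag), so saying it is ``balanced against the $\VV$ hypothesis at a nearby parameter'' is the missing proof, not a step of it. Tellingly, the paper's own purely kernel-based estimate \eqref{eq:p=1--r} (the Conde-Alonso--Culiuc--Di Plinio--Ou style product expansion for integer $r$, which crucially uses disjointness of $\fQ$) achieves only the exponent $2^{sd}$, i.e.\ no gain over the trivial bound.

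In the paper the improvement is generated at $p=1$, $r=2$, by a different mechanism: Lemma \ref{lem:CSnew} and Corollary \ref{cor:CSnew} decompose $\chi_j(2^j(1-\rho(D)))$ into $O(2^{j(d-1)/2})$ plate multipliers $\chi_{j,\nu}$, use orthogonality from their bounded overlap, the anisotropic kernel bounds \eqref{eq:nonisotropicnew}, and the counting estimate \eqref{eq:cardcubesbd} for how many cubes of $\fQ_{j-s}$ can meet a dual plate; this gives \eqref{eq:p=1-r=2} with exponent $2^{s(3d+1)/4}$, strictly better than $2^{sd}$. Interpolating \eqref{eq:p=1-r=2} with \eqref{eq:p=1--r} gives Proposition \ref{prop:p=1} (gain at $p=1$ for all $r>1$), and only then does one interpolate in $p$ with the $\VV$-based Proposition \ref{prop:p=p0} to spread a fraction of that gain over $1\le p<p_\circ$, $p<r<\rstar$. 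So your overall interpolation architecture is workable, but you would need to replace your second endpoint bound by something like the plate-decomposition/counting argument (or another genuine exploitation of oscillation at or near $p=1$); as written, the step that produces $\varepsilon>0$ is not established.
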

We will prove Theorem \ref{thm:Xi} in \S \ref{sec:ProofofMainProp}.
It will be convenient to also state a straightforward variant with larger cubes in $\fQ_{\ell+n}$, which is implied
by Theorem \ref{thm:Xi}.
\begin{cor} \label{cor:largecubescor} Assume the assumptions of Theorem \ref{thm:Xi} and let for $n\ge 0$,
\[\Xi_{-n,\fQ} [F,\beta] :=\sum_{\ell\ge 0} \sum_{Q\in \fQ_{\ell+n} } \beta(Q) A_{\la,\ell} [F_Q\bbone_Q].\]
Then for all $n\ge 0$, and collections of disjoint cubes $\fQ\subset\fD_{\ge 0}$,
\[ \|\Xi_{-n,\fQ}[F,\beta] \|_r\lc \|\beta\|_{ \ell^{r,1}(\mu)} \|F\|_ {\ell^\infty(L^p)}. \]
\end{cor}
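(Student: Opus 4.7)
The plan is to reduce the estimate for $\Xi_{-n,\fQ}$ to the case $s=0$ of Theorem \ref{thm:Xi} by partitioning each cube of $\fQ$ into dyadic subcubes whose sidelength matches the scale $2^\ell$ of the operator $A_{\la,\ell}$ acting on it.

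Concretely, for $Q \in \fQ_{\ell+n}$ I would let $\cS(Q)$ denote the family of the $2^{nd}$ dyadic subcubes $Q' \subset Q$ with $L(Q')=\ell$, and set
\[
\widetilde{\fQ} := \bigcup_{\ell\ge 0}\bigcup_{Q\in \fQ_{\ell+n}} \cS(Q),\qquad \widetilde F_{Q'} := F_{\pi(Q')},\qquad \widetilde\beta(Q') := \beta(\pi(Q')),
\]
where $\pi(Q') \in \fQ$ is the unique ancestor containing $Q'$; this is well-defined since the cubes of $\fQ$ are pairwise disjoint. As $\fD$ is closed under taking dyadic subcubes, $\widetilde\fQ$ is a disjoint subfamily of $\fD_{\ge 0}$, and its level-$\ell$ part is exactly $\widetilde\fQ_\ell = \bigcup_{Q\in \fQ_{\ell+n}} \cS(Q)$. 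Using $\bbone_Q = \sum_{Q'\in \cS(Q)} \bbone_{Q'}$ and the linearity of $A_{\la,\ell}$, this gives the key identity
\[
\Xi_{-n,\fQ}[F,\beta] = \sum_{\ell\ge 0}\sum_{Q'\in \widetilde\fQ_\ell} \widetilde\beta(Q')\, A_{\la,\ell}[\widetilde F_{Q'}\bbone_{Q'}] = \Xi_{0,\widetilde\fQ}[\widetilde F,\widetilde\beta].
\]

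It then remains to compare norms. Clearly $\|\widetilde F\|_{\ell^\infty(L^p)} = \|F\|_{\ell^\infty(L^p)}$. For the Lorentz norm I would use that $\sum_{Q'\in \cS(Q)} |Q'| = |Q|$, whence for every $\alpha>0$,
\[
\mu\big(\{Q'\in \widetilde\fQ: |\widetilde\beta(Q')|>\alpha\}\big) = \sum_{\substack{Q\in \fQ\\ |\beta(Q)|>\alpha}} \sum_{Q'\in \cS(Q)} |Q'| = \sum_{\substack{Q\in \fQ\\ |\beta(Q)|>\alpha}} |Q| = \mu\big(\{Q\in \fQ: |\beta(Q)|>\alpha\}\big).
\]
Thus $\widetilde\beta$ and $\beta$ are equimeasurable with respect to $\mu$, so $\|\widetilde\beta\|_{\ell^{r,1}(\mu)} = \|\beta\|_{\ell^{r,1}(\mu)}$. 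Applying Theorem \ref{thm:Xi} at $s=0$, in which case the factor $2^{s(d/p-\eps)}$ reduces to $1$, yields the desired bound. The argument is purely bookkeeping and I do not expect any genuine obstacle; all the analytic content is already contained in Theorem \ref{thm:Xi}, and the point is simply that its $s=0$ bound is independent of the scale of the cubes.
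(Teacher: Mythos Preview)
Your proof is correct and is essentially identical to the paper's: both pass to the family $\widetilde\fQ$ of dyadic subcubes $Q'\subset Q$ with $L(Q')=L(Q)-n$, set $\widetilde F_{Q'}=F_{\pi(Q')}$, $\widetilde\beta(Q')=\beta(\pi(Q'))$, observe $\Xi_{-n,\fQ}[F,\beta]=\Xi_{0,\widetilde\fQ}[\widetilde F,\widetilde\beta]$, and apply Theorem~\ref{thm:Xi} with $s=0$. Your equimeasurability argument for the Lorentz norm is in fact slightly cleaner than the paper's, which only writes out the $\ell^r$ identity.
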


\begin{proof}
We apply Theorem \ref{thm:Xi} for $s=0$. Indeed let for each cube $Q\in \fD$ denote by $R^n(Q)$ the unique cube in $\fD_{L(Q)+n}$ which contains $Q$. Let $\widetilde {\fQ}$ be the collection of all cubes $Q\in \fD_{\ge 0}$ such that $R^n(Q)\in \fQ$. If the cubes in $\fQ$ are disjoint then the cubes in $\widetilde \fQ$ are also disjoint.
For $Q\in \widetilde \fQ$ we set $\widetilde \beta(Q)=\beta(R^n(Q))$ and
$\widetilde F_Q=F_{R^n(Q)} $. Then
$\Xi_{-n, \fQ}(F,\beta) =\Xi_{0,\widetilde {\fQ}} (\widetilde F, \widetilde \beta)$, $\|\widetilde F\|_{\ell^\infty(\widetilde \fQ, L^p)}=\|F\|_{\ell^\infty(\fQ, L^p)}$ and
\[\|\widetilde \beta\|_{\ell^r(\mu, \widetilde \fQ)}^r =
\sum_{\ell\ge 0} \sum_{Q\in \widetilde \fQ_\ell}|Q| |\widetilde \beta(Q)|^r=
\sum_{\ell\ge 0} \sum_{Q'\in \fQ_{\ell+n} }\sum_{\substack{Q\in \fD_\ell\\Q\subset Q'} }|Q| |\beta(Q')|^r= \|\beta\|_{\ell^r(\mu, \fQ)}^r.
\]
The corollary now follows applying Theorem \ref{thm:Xi} to $\Xi_{0,\widetilde \fQ}[ \widetilde F,\widetilde \beta]$.
\end{proof}

The main motivation for Theorem \ref{thm:Xi} is its applications to the action of Bochner-Riesz type operators on the collection of functions in a Calder\'on--Zygmund decomposition.

\begin{prop}\label{CZ-prop}
Let $d\ge 2$, $\frac{2(d+1)}{d+3} \le p_\circ<\frac{2d}{d+1}$, $p_\circ \leq r_\circ \leq \frac{d-1}{d+1}p_\circ'$ and assume that $\hypo$ holds. Let $1\le p<p_\circ$ and $p<r<\rstar$. Let $\fQ\subset \fD_{\ge 0}$ be a collection of disjoint cubes, $\alpha>0$ and $\{f_Q\}_{Q \in \fQ}$ functions with $\supp(f_Q)\subset\overline{Q}$ and
\Be \label{eq:alphap-bds}
\int_Q |f_Q|^p \le \alpha^p|Q| \qquad \text{for all $Q\in \fQ$}.
\Ee
Then there exists an $\eps=\eps(p,r)>0$ such that for all $s \geq 0$, the inequality
\Be\label{eq:CZ-cons}
\Big\| \sum_{\ell\ge s} u_\ell h_{\la(p),\ell} (\rho(D)) \big[\sum_{Q\in \fQ_{\ell-s}} f_Q \big] \Big\|_r^r \lc \|u\|_{\ell^\infty}^r 2^{-\eps sr} \alpha^{r-p} \sum_{Q\in \fQ} \|f_Q\|_p^p
\Ee
holds for all sequences of complex numbers
$u=\{u_\ell\}_{\ell=0}^\infty$.
Moreover, for all $n\ge 0$,
\Be\label{eq:CZ-cons-n}
\Big\| \sum_{\ell\ge 0} u_\ell h_{\la(p),\ell} (\rho(D)) \big[\sum_{Q\in \fQ_{\ell+n}} f_Q \big ] \Big\|_r^r \lc \|u\|_{\ell^\infty}^r 2^{ndr/p} \alpha^{r-p} \sum_{Q\in \fQ} \|f_Q\|_p^p .
\Ee
\end{prop}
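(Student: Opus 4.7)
My plan is to recast the left-hand sides of \eqref{eq:CZ-cons} and \eqref{eq:CZ-cons-n} as bilinear forms $\Xi_{s,\fQ}[F,\beta]$ and then invoke Theorem \ref{thm:Xi}, respectively Corollary \ref{cor:largecubescor}, after an $\ell^{r,1}(\mu)\to\ell^r(\mu)$ upgrade. Assuming $f_Q\not\equiv 0$ (otherwise the corresponding $Q$ may be dropped), set $F_Q:=f_Q/\|f_Q\|_p$ so that $\|F\|_{\ell^\infty(L^p)}\le 1$, and
\[
\beta(Q):=u_\ell\,\|f_Q\|_p\,2^{-\ell(\la+\frac{d+1}{2})},\qquad \text{where }\ell=s+L(Q)\text{ for }Q\in\fQ_{\ell-s}.
\]
Recalling $A_{\la,\ell}=2^{\ell(\la+\frac{d+1}{2})}h_{\la,\ell}(\rho(D))$ from \eqref{eq:Aldef}, the left-hand side of \eqref{eq:CZ-cons} coincides exactly with $\Xi_{s,\fQ}[F,\beta]$.

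Under $\hypo$, Theorem \ref{thm:Xi} yields $\|\Xi_{s,\fQ}[F,\beta]\|_r\lc 2^{s(d/p-\eps)}\|\beta\|_{\ell^{r,1}(\mu)}$ for every $r\in(p,\rstar)$. Since this range is open, bilinear real interpolation in the second variable, via $(\ell^{r_1,1},\ell^{r_2,1})_{\theta,r}=\ell^r$ with $p<r_1<r<r_2<\rstar$ and $\tfrac{1-\theta}{r_1}+\tfrac{\theta}{r_2}=\tfrac{1}{r}$, upgrades this to
\[
\|\Xi_{s,\fQ}[F,\beta]\|_r\lc 2^{s(d/p-\eps')}\|\beta\|_{\ell^r(\mu)}\|F\|_{\ell^\infty(L^p)}
\]
for some (slightly smaller) $\eps'=\eps'(p,r)>0$.

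A direct computation then closes the argument. Writing $j=L(Q)\ge 0$, so $|Q|=2^{jd}$, and using $\|f_Q\|_p^r\le\alpha^{r-p}2^{jd(r-p)/p}\|f_Q\|_p^p$ from \eqref{eq:alphap-bds}, one finds
\[
\|\beta\|_{\ell^r(\mu)}^r\le\|u\|_{\ell^\infty}^r\alpha^{r-p}\,2^{-rs(\la+\frac{d+1}{2})}\sum_{j\ge 0}2^{jr[\frac{d}{p}-\la-\frac{d+1}{2}]}\sum_{Q\in\fQ_j}\|f_Q\|_p^p.
\]
The key observation is that when $\la=\la(p)=\tfrac{d}{p}-\tfrac{d+1}{2}$ the bracketed exponent vanishes identically, and the same identity $\tfrac{d}{p}=\la+\tfrac{d+1}{2}$ also makes the factor $2^{rs\cdot d/p}$ arising from the bilinear bound cancel against the $2^{-rs(\la+\frac{d+1}{2})}$ prefactor. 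Taking $r$-th powers then yields \eqref{eq:CZ-cons} with $\eps=\eps'$.

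For \eqref{eq:CZ-cons-n} the recipe is identical, but with Corollary \ref{cor:largecubescor} replacing Theorem \ref{thm:Xi} (so no $2^{s(d/p-\eps)}$ factor arises) and with $\beta(Q):=u_{L(Q)-n}\|f_Q\|_p\,2^{-(L(Q)-n)(\la+\frac{d+1}{2})}$ for $Q\in\fQ_{\ell+n}$. Now $|Q|=2^{(\ell+n)d}$, which produces an extra factor of $2^{ndr/p}$, while the $\ell$-exponent again vanishes at $\la=\la(p)$, matching the stated bound exactly. The only non-routine step in either case is the bilinear interpolation that replaces $\ell^{r,1}$ by $\ell^r$; everything else is algebraic bookkeeping anchored on the critical-index identity $\tfrac{d}{p}=\la(p)+\tfrac{d+1}{2}$.
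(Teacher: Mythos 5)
Your proposal is correct, and apart from one step it is the paper's own argument: the same normalization $F_Q=f_Q/\|f_Q\|_p$, the same encoding of the sizes into $\beta(Q)$ (your $\beta$ is the paper's up to the harmless factor $2^{-sd/p}$, since $\la(p)+\tfrac{d+1}{2}=\tfrac dp$), the same identification of the left-hand sides of \eqref{eq:CZ-cons} and \eqref{eq:CZ-cons-n} with $\Xi_{s,\fQ}[F,\beta]$ and $\Xi_{-n,\fQ}[F,\beta]$, and the same appeal to Theorem \ref{thm:Xi} and Corollary \ref{cor:largecubescor}. Where you differ is the handling of the Lorentz norm: the paper keeps the $\ell^{r,1}(\mu)$ bound and estimates $\|\beta\|_{\ell^{r,1}(\mu)}\lc \|\beta\|_{\ell^{\infty}(\mu)}^{1-p/r}\|\beta\|_{\ell^{p}(\mu)}^{p/r}$ via $\ell^{r,1}=[\ell^\infty,\ell^p]_{p/r,1}$, which together with \eqref{eq:alphap-bds} gives $\|u\|_\infty\,\alpha^{1-p/r}(\sum_Q\|f_Q\|_p^p)^{1/r}$ at once; you instead interpolate the (for fixed $F$, linear) map $\beta\mapsto \Xi_{s,\fQ}[F,\beta]$ between two exponents $p<r_1<r<r_2<\rstar$, using $(\ell^{r_1,1}(\mu),\ell^{r_2,1}(\mu))_{\theta,r}=\ell^{r}(\mu)$ and $(L^{r_1},L^{r_2})_{\theta,r}=L^r$, to replace $\ell^{r,1}$ by $\ell^r$, and then conclude by the termwise bound $\|f_Q\|_p\le\alpha|Q|^{1/p}$. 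Both interpolation steps are legitimate (the Lorentz-space identifications hold with constants independent of the atomic measure space $(\fQ,\mu)$, and the induced $\eps'$ is a convex combination of $\eps(p,r_1),\eps(p,r_2)$, hence positive); your route consumes the openness of the $r$-range $(p,\rstar)$ and changes the value of $\eps$, which is immaterial for the statement, while the paper's route works for each fixed admissible $r$ directly. The algebra at the critical index and the treatment of \eqref{eq:CZ-cons-n} via Corollary \ref{cor:largecubescor} check out exactly as you wrote them.
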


\begin{proof}[Proof (assuming Theorem \ref{thm:Xi})] Note $h_{\la(p),\ell}(\rho(D))=2^{-\ell d/p} A_{\la(p),\ell}$. For $Q\in \fQ$ set
\[ F_Q(x)= \begin{cases} f_Q/\|f_Q\|_p, &\text{if $\|f_Q\|_p\neq 0$}
\\ 0 &\text{otherwise} \end{cases} \quad \text{ and } \quad \beta(Q)= u_{L(Q)+s}2^{-L(Q)d/p} \|f_Q\|_p . \]
Then we get, with $\Xi_{s,\fQ}$ as in \eqref{eq:Xidef},
\[ \sum_{\ell\ge s} u_\ell h_{\la(p),\ell} (\rho(D)) \big[\sum_{Q\in \fQ_{\ell-s}} f_Q \big]= 2^{-sd/p} \Xi_{s,\fQ}[F,\beta]\,\]
{where we have of course used that $\ell=L(Q)+s$ for $Q \in \fQ_{\ell-s}$.} Applying Theorem \ref{thm:Xi} and the normalization $\|F\|_{\ell^\infty(L^p)}\le 1$ the
left-hand side of \eqref{eq:CZ-cons} is dominated by $[C 2^{-\eps s} \|\beta\|_{\ell^{r,1}(\mu) } ]^r$.
For $p<r<\infty$ the space $\ell^{r,1} $ is the real interpolation space $[\ell^\infty , \ell^p]_{\vartheta,1}$ with $\vartheta=p/r$ and therefore
\begin{align*}
\|\beta\|_{\ell^{r,1} (\fQ, \mu) } & \lc \|\beta\|_{\ell^{\infty} (\fQ, \mu) }^{1-\frac pr} \|\beta\|_{\ell^{p} (\fQ, \mu) }^{\frac pr}
\\ &\lc\|u\|_\infty \Big( \sup_Q \Big(\frac{1}{|Q|}\int_Q|f_Q|^p\Big)^{1/p} \Big)^{1-\frac pr}
\Big(\sum_{Q\in \fQ}\|f_Q\|_p^p\Big)^{1/r} \\
& \lesssim \|u\|_\infty \, \alpha^{ 1-\frac pr } \Big(\sum_{Q\in \fQ}\|f_Q\|_p^p\Big)^{1/r}
\end{align*}
using the assumption \eqref{eq:alphap-bds}. This establishes \eqref{eq:CZ-cons} and
\eqref{eq:CZ-cons-n} is obtained in the same way, using Corollary \ref{cor:largecubescor}.
\end{proof}

\section{Proof of Theorem \ref{thm:Xi}}\label{sec:ProofofMainProp}

\subsection{Reduction to a linear operator}
With $\fQ, \fQ_{j}$ as above and $A_{\lambda,\ell}$ defined as in \eqref{eq:Aldef}, let
\Be\label{eq:sAdef} \sA_{s, \fQ} F:= \sum_{\ell \ge s}\sum_{Q\in \fQ_{\ell-s}} A_{\la,\ell} [F_Q\bbone_Q]. \Ee
We will show that Theorem \ref{thm:Xi} is a consequence of the following.
\begin{thm} \label{thm:linearversion}
Let $d\ge 2$, $\frac{2(d+1)}{d+3} \le p_\circ<\frac{2d}{d+1}$, $p_\circ \leq r_\circ \leq \frac{d-1}{d+1}p_\circ'$ and assume that $\hypo$ holds.
Then for $1\le p<p_\circ$, $p<r<\rstar$
there is $\eps=\eps(p,r)>0$
such that for all $s\geq 0$, and all collections of disjoint cubes $\fQ\subset\fD_{\ge 0}$
\Be\label{eq:eps-pr-gain} \|\sA_{s,\fQ} F\|_r \lc 2^{s(\frac dp-\eps)} \mu(\fQ)^{1/r} \|F\|_{\ell^\infty(L^p)}.\Ee
\end{thm}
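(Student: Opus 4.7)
The plan is to prove Theorem \ref{thm:linearversion} by combining two distinct estimates on $\sA_{s,\fQ}$: a ``naive'' bound via the $\VV$-type hypothesis $\hypo$ which produces the factor $2^{sd/p}$, and a complementary endpoint bound---obtained from a direct kernel analysis of $K_{\la,\ell}$---whose $s$-growth is strictly smaller. A suitable interpolation then extracts the $\eps$-improvement.

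First, I would group cubes in $\fQ_{\ell-s}$ by their dyadic ancestors in $\fD_\ell$: for each $R\in\fD_\ell$ set $g_R^\ell:=\sum_{Q\in\fQ_{\ell-s},\,Q\subset R}F_Q\bbone_Q$, so that $\supp g_R^\ell\subset R$ and $\sA_{s,\fQ}F=\sum_{\ell\ge s}A_{\la,\ell}\bigl[\sum_{R\in\fD_\ell}g_R^\ell\bigr]$. After writing $A_{\la,\ell}=2^{\ell(d+1)/2}\chi_\ell(2^\ell(1-\rho(D)))$ with $\chi_\ell$ a uniformly bounded bump in $\cY_M$, the $\VV(p,r)$ bound (which follows from $\hypo$ for $p\ge\tfrac{2(d+1)}{d+3}$, and from Lemma \ref{lem:ST-style} at $r=2$ unconditionally for $1\le p\le\tfrac{2(d+1)}{d+3}$) yields
\[ \|\sA_{s,\fQ}F\|_r\lesssim\Bigl(\sum_\ell\sum_R|R|\,\|g_R^\ell\|_p^r\Bigr)^{1/r}. \]
Using disjointness of $\{Q\subset R\}$ to estimate $\|g_R^\ell\|_p\le\|F\|_{\ell^\infty(L^p)}N_R^{1/p}$ with $N_R=\#\{Q\in\fQ_{\ell-s}:Q\subset R\}\le 2^{sd}$, together with $\sum_R N_R=|\fQ_{\ell-s}|$ and $\sum_\ell 2^{\ell d}|\fQ_{\ell-s}|=2^{sd}\mu(\fQ)$, a direct summation yields the naive estimate
\[ \|\sA_{s,\fQ}F\|_r\lesssim 2^{sd/p}\mu(\fQ)^{1/r}\|F\|_{\ell^\infty(L^p)}, \qquad(\ast) \]
which is the theorem with $\eps=0$.

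To upgrade $(\ast)$ to $\eps>0$, I would establish a complementary bound at a different exponent pair with strictly better $s$-growth. This is where the structure of the kernel $K_{\la,\ell}$ from \S\ref{sec:decomp} becomes essential: the high-order vanishing of $h_{\la,\ell}$ at $\rho=1$ (Lemma \ref{lem:cancellations} and the estimate \eqref{eq:hlaellcancat1-deriv}), combined with the tight spatial localization in Lemma \ref{lem:error-kernelest} and the sup-bound in Lemma \ref{lem:main-kernelest}, should produce an auxiliary inequality whose $s$-exponent is strictly less than $d/p$ at some $(p,r_2)$. In the Stein--Tomas corner $p=\tfrac{2(d+1)}{d+3}$, $r=2$, the frequency disjointness of the $A_{\la,\ell}$ across $\ell$ gives $\|\sA_{s,\fQ}F\|_2^2=\sum_\ell\|A_{\la,\ell}G_\ell\|_2^2$ with $G_\ell=\sum_{Q\in\fQ_{\ell-s}}F_Q\bbone_Q$; inserting $\|A_{\la,\ell}\|_{L^{p_{ST}}\to L^2}\lesssim 2^{\ell d/2}$ from Stein--Tomas, bounding $\|G_\ell\|_{p_{ST}}^{p_{ST}}\le\|F\|_{\ell^\infty(L^{p_{ST}})}^{p_{ST}}|\fQ_{\ell-s}|$, and summing the geometric series in $\ell$ leads to
\[ \|\sA_{s,\fQ}F\|_2\lesssim 2^{sd/2}\mu(\fQ)^{1/p_{ST}}\|F\|_{\ell^\infty(L^{p_{ST}})}. \]
The $s$-exponent $d/2$ is strictly smaller than $d/p_{ST}$, even though the $\mu$-exponent is worse. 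For $p>p_{ST}$, analogous complementary bounds should follow by combining the kernel cancellation with $\hypo$ at an auxiliary $(p',r')$ interpolating toward $(p_\circ,r_\circ)$ in the scheme of \eqref{eq:r*def}.

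A bilinear real interpolation between $(\ast)$ and the complementary estimate, arranged so that the output $\mu$-exponent equals exactly $1/r$, produces an $s$-exponent that is a strict convex combination of $d/p$ and the smaller auxiliary exponent, hence strictly less than $d/p$. This yields the claimed bound with $\eps=\eps(p,r)>0$ throughout the range $p<r<r_*(p,p_\circ,r_\circ)$. The main obstacle is the complementary estimate: because $\VV$ is scale-covariant, no interpolation using only $\VV$-type input produces a gain in $s$; the gain must come from ingredients outside $\VV$, specifically the cancellation of Lemma \ref{lem:cancellations} and the $L^2$-orthogonality of the annular pieces. Implementing this uniformly across $1\le p<p_\circ$, $p<r<r_*(p,p_\circ,r_\circ)$ while respecting the $\mu(\fQ)^{1/r}$ normalization, and balancing the trade-off between the $s$- and $\mu$-exponents of the two endpoint bounds so that interpolation yields the desired form, is the technical crux of the proof.
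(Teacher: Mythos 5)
Your overall architecture (a ``naive'' bound with constant $2^{sd/p}\mu(\fQ)^{1/r}$ coming from the $\VV$ hypothesis, plus a complementary estimate with smaller $s$-growth, glued by interpolation) is the same as the paper's, and your derivation of $(\ast)$ is essentially Proposition \ref{prop:p=p0} (modulo the fact that $2^{\ell\la}h_{\la,\ell}\circ\rho$ is not literally a bump in $\cY_M$ at scale $2^{-\ell}$; the tails must be handled as in Lemma \ref{lem:conseq-of-hyp}, which is routine). The genuine gap is in the complementary estimate and in the interpolation step. Your Stein--Tomas-corner bound carries the constant $2^{sd/2}\mu(\fQ)^{1/p_{ST}}$, i.e.\ the wrong $\mu$-homogeneity: the target at $(p_{ST},2)$ is $\mu(\fQ)^{1/2}$, and since $\mu(\fQ)\ge 1$ can be arbitrarily large independently of $s$, the excess power $\mu^{1/p_{ST}-1/2}$ cannot be traded against powers of $2^{s}$. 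Interpolation cannot ``arrange the output $\mu$-exponent to be exactly $1/r$'': the interpolated constant is the convex combination $2^{s((1-\theta)a_0+\theta a_1)}\mu^{(1-\theta)b_0+\theta b_1}$, so with inputs $b_0=1/r_0$ and $b_1=1/p_{ST}>1/2$ every interpolant has $\mu$-exponent strictly larger than $1/r_\theta$, and no admissible estimate with $\mu$-exponent below $1/r$ exists (test widely separated unit cubes). Thus your two inputs can never produce \eqref{eq:eps-pr-gain}.

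Obtaining a complementary bound with the \emph{correct} $\mu^{1/r}$ scaling is precisely the hard part, and your crude step $\|G_\ell\|_{p_{ST}}^{p_{ST}}\le\|F\|^{p_{ST}}\#\fQ_{\ell-s}$ throws away exactly the information (the spatial distribution and disjointness of the cubes) needed for it. The paper gets the gain at $p=1$, not at $p_{ST}$: Proposition \ref{prop:p=1} rests on (a) the $L^2$ bound \eqref{eq:p=1-r=2}, $\|\sA_{s,\fQ}F\|_2\lc 2^{s(3d+1)/4}\mu(\fQ)^{1/2}\|F\|_{\ell^\infty(L^1)}$, proved via the Christ--Sogge decomposition of the annulus into $O(2^{j(d-1)/2})$ plates and a count of how many cubes of sidelength $2^{j-s}$ can meet a plate-shaped region (Lemma \ref{lem:CSnew}, \eqref{eq:Kjnubound}--\eqref{eq:cardcubesbd}); and (b) the bound \eqref{eq:p=1--r} for large integer $r$, $2^{sd}\mu(\fQ)^{1/r_1}$, proved by expanding the $r$-th power and using the $L^\infty$ kernel bounds of Lemmas \ref{lem:error-kernelest}--\ref{lem:main-kernelest} together with disjointness of the cubes (even this ``trivial-looking'' estimate is not the trivial \eqref{eq:tr-L1}, because of the $\mu^{1/r_1}$ homogeneity). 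These two are interpolated to give a gain for $1<u<\tfrac{2(d+1)}{d+3}$, and a second interpolation with Proposition \ref{prop:p=p0} near $(p_\circ,r_\circ)$ spreads the gain over all $1\le p<p_\circ$, $p<r<\rstar$. Your proposal correctly senses that the gain must come from cancellation and orthogonality outside $\VV$, but it contains neither the cube-counting geometric argument nor the disjointness-based $L^r$ expansion, which are the actual engine of both the $\eps$-gain and the $\mu^{1/r}$ normalization.
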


\begin{proof}[Proof of Theorem \ref{thm:Xi} assuming Theorem \ref{thm:linearversion}]
For completeness we include the standard argument (\cf. {\cite[Ch. V.3]{stein-weiss}}).
Let $\beta^*$ be the nonincreasing rearrangement of $\beta$. We may decompose $\beta= \sum_{k\in \bbZ} \beta^k$ where $\beta^k(Q)= \beta(Q) \bbone_{\fE^k} (Q) $ and $\fE^k=\{Q\in \fQ: \beta^*(2^{k+1} ) <|\beta(Q)| \le \beta^*(2^{k})\} . $ Observe that
$\mu(\fE^k)\le 2^{k+1} $, which also shows that the $\fE^k$ are finite sets. We have $\|\Xi_{s,\fQ}[F, \beta] \|_r\lc \sum_{k} \|\Xi_{s,\fQ}[F, \beta^k]\|_r $ which is written as
\[ \sum_{k}
\beta^*(2^k)\|\Xi_{s,\fQ}[F, \tfrac{\beta \bbone_{\fE^k}}{\beta^*(2^k) }]\|_r
= \sum_{k}\beta^*(2^k) \| \mathscr A_{s,\fE^k} F^k\|_r\]
where the function $G^k$ is defined by $F_Q^k(x)=F_Q( x) \bbone_{\fE^k}(Q) \frac{\beta(Q)}{\beta^*(2^k) }$. Note that $|F_Q^k(x)|\le |F_Q(x)|.$ Applying Theorem \ref{thm:linearversion} to $\sA_{s,\fE^k} F^k$ we get
\begin{align*} & \sum_{k}\beta^*(2^k) \| \mathscr A_{s,\fE^k} [ F^k]\|_r
\lc 2^{s(\frac dp-\eps)} \sum_{k}\beta^*(2^k) \mu(\fE^k)^{1/r} \|F^k\|_{\ell^\infty(L^p) }
\end{align*} and since $\mu(\fE^k)^{1/r}\lc 2^{k/r}$ and $ \|F^k\|_{\ell^\infty(L^p) }
\le \|F\|_{\ell^\infty(L^p) }$ we see that the right-hand side is $\lc 2^{s(\frac dp-\eps)} \|\beta\|_{\ell^{r,1}} \|F\|_{\ell^\infty(L^p) }$, as desired.
\end{proof}

The key to prove Theorem \ref{thm:linearversion} are the following propositions.

\begin{prop}\label{prop:p=p0}
Let $d\ge 2$, $\frac{2(d+1)}{d+3} \le p_\circ<\frac{2d}{d+1}$, $p_\circ \leq r_\circ \leq \frac{d-1}{d+1}p_\circ'$ and assume that $\hypo$ holds. Then for $\tfrac{2(d+1)}{d+3}\le p< p_\circ$, $p\le r<\rstar$ and all $s \geq 0$, and collections of disjoint cubes $\fQ\subset\fD_{\ge 0}$,
\begin{equation}\label{eq:p=p0}
\|\sA_{s,\fQ} F \|_{r} \lc 2^{s\frac{d}{p} } \mu(\fQ)^{ \frac{1}{r}} \|F\|_{\ell^\infty(L^{p} ) }.
\end{equation}
\end{prop}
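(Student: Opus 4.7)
The bound in Proposition \ref{prop:p=p0} is the ``trivial'' size estimate corresponding to a direct application of the hypothesis $\VV(p,r)$, which is in force under $\hypo$. The factor $2^{sd/p}$ is exactly the loss one incurs when passing from cubes $Q\in \fQ_{\ell-s}$ of sidelength $2^{\ell-s}$ to their $\fD_\ell$-ancestors $Q'$ of sidelength $2^\ell$; unlike in Theorem \ref{thm:linearversion}, no $s$-improvement needs to be extracted.

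The first step is to regroup. For $Q\in \fQ_{\ell-s}$ let $Q'\in \fD_\ell$ denote the unique ancestor of sidelength $2^\ell$ and set $g_{Q'}:=\sum_{Q\in \fQ_{\ell-s},\,Q\subset Q'} F_Q\bbone_Q$, which is supported in $\overline{Q'}$. Disjointness of $\fQ$, together with $\|F_Q\bbone_Q\|_p\le \|F\|_{\ell^\infty(L^p)}$, gives both the pointwise bound $\|g_{Q'}\|_p\le 2^{sd/p}\|F\|_{\ell^\infty(L^p)}$ and the summation identity
\[\sum_{Q'\in \fD_\ell}|Q'|\|g_{Q'}\|_p^p=2^{sd}\sum_{Q\in \fQ_{\ell-s}}|Q|\|F_Q\bbone_Q\|_p^p\le 2^{sd}\mu(\fQ_{\ell-s})\|F\|_{\ell^\infty(L^p)}^p,\]
and the rewriting $\sA_{s,\fQ}F=\sum_{\ell\ge s}A_{\lambda,\ell}\big[\sum_{Q'\in \fD_\ell}g_{Q'}\big]$, in which the cube scale matches the frequency scale of $A_{\lambda,\ell}$.

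The second step is to cast $A_{\lambda,\ell}=2^{\ell(\lambda+(d+1)/2)}h_{\lambda,\ell}(\rho(D))$ into the annular bump form of \eqref{prvv-conclusion}. By Lemma \ref{lem:ptwisebdhellla}, $2^{\ell\lambda}h_{\lambda,\ell}\circ\rho$ is concentrated on $|1-\rho|\lc 2^{-\ell}$ with rapid polynomial decay in $2^\ell|1-\rho|$, while \eqref{eq:hlaellcancat1-deriv} provides the complementary higher-order vanishing at $\rho=1$. A dyadic partition in the rescaled variable $2^\ell(1-\rho)$ thus yields
\[2^{\ell\lambda}h_{\lambda,\ell}(\rho(\xi))=\sum_{k} c_{k}\,\chi_{\ell,k}(2^{\ell-k}(1-\rho(\xi))),\qquad \chi_{\ell,k}\in \cY_M,\quad |c_k|\lc 2^{-|k|N},\]
with $N$ as large as desired. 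For each fixed $k$, the $k$-th piece is amenable to $\VV(p,r)$ at frequency scale $j=\ell-k$ after regrouping each $g_{Q'}$ into its $\fD_j$-pieces; a discrete Hölder step (using $\ell^p\subset \ell^r$ in counting measure for $p\le r$) contributes at most a $2^{O(|k|)}$ loss, absorbed by the decay $|c_k|\lc 2^{-|k|N}$, and summation in $k$ yields
\[\|\sA_{s,\fQ}F\|_r\lc \Big(\sum_{\ell\ge s}\sum_{Q'\in \fD_\ell}|Q'|\|g_{Q'}\|_p^r\Big)^{1/r}.\]

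Finally, writing $\|g_{Q'}\|_p^r=\|g_{Q'}\|_p^{r-p}\|g_{Q'}\|_p^p$ and inserting the two bounds from the first step yields
\[\sum_{\ell\ge s}\sum_{Q'\in \fD_\ell}|Q'|\|g_{Q'}\|_p^r\le 2^{sd(r-p)/p}\cdot 2^{sd}\mu(\fQ)\|F\|_{\ell^\infty(L^p)}^r=2^{sdr/p}\mu(\fQ)\|F\|_{\ell^\infty(L^p)}^r,\]
and taking $r$-th roots gives \eqref{eq:p=p0}. The only substantive difficulty lies in the second step: reconciling the disk-like concentration of $h_{\lambda,\ell}$ near $\rho=1$ with the annular bump structure assumed in $\VV(p,r)$, and verifying that the $k$-dependent losses coming from the cube regrouping are absorbed by the rapid decay of the coefficients $c_k$. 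The remaining steps are routine counting.
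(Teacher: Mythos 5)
Your proposal is correct and follows essentially the same route as the paper: the paper's proof (via Lemma \ref{lem:conseq-of-hyp}) likewise decomposes $2^{\ell(\la+\frac{d+1}{2})}h_{\la,\ell}$ dyadically in the variable $1-\varrho$ into normalized bumps $\chi(2^{\ell\mp m}(1-\varrho))$ with coefficients decaying thanks to \eqref{eq:hlaellnear1} and \eqref{eq:hlaellcancat1-deriv}, regroups the cubes to match each frequency scale, and applies $\VV(p,r)$, with your ancestor-regrouping plus the splitting $\|g_{Q'}\|_p^r=\|g_{Q'}\|_p^{r-p}\|g_{Q'}\|_p^p$ playing the role of the paper's Hölder step for $s>0$. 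The only imprecision is that for the thin-annulus pieces ($k<0$ in your notation) the coefficient decay is not ``as large as desired'' but limited to the fixed order $N_\circ+1$ coming from \eqref{eq:hlaellcancat1-deriv}; since $N_\circ>d\ge d/p$ (and $N_\circ$ may be chosen arbitrarily large in Lemma \ref{lem:cancellations}), this still absorbs the $2^{|k|d/p}$ regrouping loss, so the argument stands.
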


\begin{prop} \label{prop:p=1}
Let $d \geq 2$. For all $1 < r < \infty$ there exists $\varepsilon(r)>0$ such that for all $s \geq 0$, and collections of disjoint cubes $\fQ\subset\fD_{\ge 0}$,
\begin{equation}
\label{eq:p=1}
\|\sA_{s,\fQ} F \|_{r} \lc 2^{s(d-\eps(r) )} \mu(\fQ)^{ \frac1{r}} \|F\|_{\ell^\infty(L^{1} )}.
\end{equation}
\end{prop}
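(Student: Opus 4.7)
My approach will combine a good $L^2$ bound with the trivial $L^\infty$ and support bounds, concluding by interpolation. Set $N := \|F\|_{\ell^\infty(L^1)}$ and write $\sA_{s,\fQ}F = \sum_{\ell\ge s}T_\ell F$ with $T_\ell F := \sum_{Q\in\fQ_{\ell-s}}A_{\lambda,\ell}[F_Q\bbone_Q]$. The frequency disjointness of the multipliers (supported in $|1-\rho|\sim 2^{-\ell}$) yields almost orthogonality $\|\sA_{s,\fQ}F\|_2^2\lesssim \sum_\ell\|T_\ell F\|_2^2$. To estimate each $\|T_\ell F\|_2$, I would observe that $A_{\lambda,\ell}=2^{\ell(d+1)/2}m_\ell(D)$ with $|m_\ell(\xi)|$ dominated pointwise on the Fourier side by a scale-$(\ell-s)$ bump $\phi(2^{\ell-s}(1-\rho(\xi)))$ with $\phi(y)=(1+|y|)^{-M}$, so $\|m_\ell(D)g\|_2\le\|\phi(2^{\ell-s}(1-\rho(D)))g\|_2$. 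Applying the single-scale Stein--Tomas estimate \eqref{eq:thetaj-ST-est} of Lemma \ref{lem:ST-style} at $p=1$ and $j=\ell-s$ (matching the scale of $\fQ_{\ell-s}\subset\fD_{\ell-s}$) yields
\[
\|T_\ell F\|_2 \lesssim 2^{s(d+1)/2}\Big(\sum_{Q\in\fQ_{\ell-s}}|Q|\,\|F_Q\|_1^2\Big)^{1/2} \lesssim 2^{s(d+1)/2}N\mu_\ell^{1/2},
\]
with $\mu_\ell=\sum_{Q\in\fQ_{\ell-s}}|Q|$. Since $\sum_\ell\mu_\ell=\mu(\fQ)$, summing produces the key $L^2$ bound $\|\sA_{s,\fQ}F\|_2\lesssim 2^{s(d+1)/2}N\,\mu(\fQ)^{1/2}$.

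From the kernel estimates of Lemmas \ref{lem:error-kernelest}--\ref{lem:main-kernelest}, $A_{\lambda,\ell}$ has $L^1\to L^\infty$-norm $\lesssim 1$ uniformly and its kernel is essentially supported in the $2^\ell$-ball. Combined with the overlap multiplicity $\le 2^{sd}$ of the $2^\ell$-enlargements of the disjoint cubes in $\fQ_{\ell-s}$, this will give
\[
\|\sA_{s,\fQ}F\|_\infty\lesssim 2^{sd}N, \qquad |\mathrm{supp}(\sA_{s,\fQ}F)|\lesssim 2^{sd}\mu(\fQ).
\]
For $r\ge 2$, log-convex interpolation between the $L^2$ and $L^\infty$ bounds then gives $\|\sA F\|_r\lesssim 2^{s(d-(d-1)/r)}N\,\mu(\fQ)^{1/r}$, furnishing $\epsilon(r)=(d-1)/r$. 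For $r\in(1,2)$, the inequality $\|f\|_r\le|\mathrm{supp}\,f|^{1/r-1/2}\|f\|_2$ combined with the $L^2$ and support bounds yields $\|\sA F\|_r\lesssim 2^{s(d/r+1/2)}N\,\mu(\fQ)^{1/r}$, giving a positive $\epsilon(r)=d(r-1)/r-1/2$ whenever $r>2d/(2d-1)$.

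The remaining range $r\in(1,2d/(2d-1)]$, where $r$ lies close to $1$, will be the main obstacle. Here I would exploit the refined cancellation \eqref{eq:hlaellcancat1-deriv} of the decomposition \eqref{eq:hlaells}: splitting $F_Q=c_Q\bbone_Q+g_Q$ into its $Q$-average $c_Q$ and mean-zero part $g_Q$, Taylor expansion of the kernel (using $\|\nabla[2^{\ell(\lambda+(d+1)/2)}K_{\lambda,\ell}]\|_\infty\lesssim 2^{-\ell}$) gives the pointwise gain $\|A_{\lambda,\ell}[g_Q]\|_\infty\lesssim 2^{-s}N$ on the mean-zero piece, improving its $L^\infty$ bound by a factor $2^{-s}$. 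The constant piece $c_Q\bbone_Q$ (with $|c_Q|\,|Q|\le N$) would then have to be treated separately via a finer application of the Stein--Tomas type inequalities that takes advantage of the high-order vanishing of $h_{\lambda,\ell}$ at $\rho=1$; this is where the novel multiplier decomposition of Section~\ref{sec:decomp} should play its decisive role, and I expect it to be the technically hardest step.
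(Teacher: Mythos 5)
Your $L^2$ step is essentially sound, and in fact gives a stronger bound ($2^{s(d+1)/2}$) than the paper's \eqref{eq:p=1-r=2} (which has $2^{s(3d+1)/4}$), at the price of invoking the curvature-dependent Stein--Tomas estimate \eqref{eq:thetaj-ST-est} of Lemma \ref{lem:ST-style} where the paper deliberately uses a curvature-free Christ--Sogge argument. One caveat: the almost-orthogonality in $\ell$ is only asserted; the multipliers $h_{\la,\ell}$ are not supported in disjoint annuli, so you must justify the cross terms using the two-sided concentration in Lemma \ref{lem:ptwisebdhellla}, in particular the interior decay \eqref{eq:hlaellcancat1-deriv} coming from the cancellation built into the decomposition. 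That is routine, however; the genuine gaps are elsewhere.

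The decisive flaw is the claimed endpoint $\|\sA_{s,\fQ}F\|_\infty\lc 2^{sd}\|F\|_{\ell^\infty(L^1)}$, on which your entire case $r\ge 2$ rests. Your multiplicity count $2^{sd}$ is a per-scale count: at a fixed point $x$, for each $\ell\ge s$ the kernel of $A_{\la,\ell}$ is concentrated on the shell $\{|x-y|\approx 2^\ell\}$, which can meet about $2^{sd}$ of the disjoint cubes in $\fQ_{\ell-s}$, so each scale can contribute up to $\approx 2^{sd}\|F\|_{\ell^\infty(L^1)}$, and nothing in your argument controls the sum over the unboundedly many scales present in $\fQ$. Placing roughly $2^{sd}$ cubes of sidelength $2^{j}$ on the shell of radius $\approx 2^{j+s}$ around the origin, for $j=0,\dots,J$, produces a disjoint family for which the value at the origin can grow like $J\,2^{sd}$, while your claimed bound is independent of $J$; so the $L^\infty$ estimate is not just unproven but false in general. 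The paper's substitute is precisely \eqref{eq:p=1--r}: an $L^{r_1}$ bound with constant $2^{sd}\mu(\fQ)^{1/r_1}$ for arbitrarily large \emph{finite} $r_1$, proved by expanding the integer power $\|\cdot\|_{r_1}^{r_1}$ and exploiting the disjointness of the cubes across scales together with the annular kernel localization of Lemmas \ref{lem:error-kernelest}--\ref{lem:main-kernelest}; this multi-scale combinatorial step is exactly what your one-scale overlap count misses, and interpolating \eqref{eq:p=1-r=2} with \eqref{eq:p=1--r} for $r_1>r$ is how the paper gets the gain for all $r\ge 2$. Finally, the range $1<r\le \frac{2d}{2d-1}$, which you leave open and expect to be the hardest, is actually the easy part: interpolating your (or the paper's) $L^2$ bound with the trivial estimate \eqref{eq:tr-L1}, $\|\sA_{s,\fQ}F\|_1\lc 2^{sd}\mu(\fQ)\|F\|_{\ell^\infty(L^1)}$, already yields a positive $\eps(r)$ for every $r>1$; no mean-zero decomposition or refined treatment of the constant part is needed there.
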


We note that Proposition \ref{prop:p=p0} is essentially a re-statement of $\hypo$, and Proposition \ref{prop:p=1} is an improvement over the trivial
\Be\label{eq:tr-L1}\|\sA_{s, \fQ}F\|_{1} \lc 2^{sd} \mu(\fQ) \| F \|_{\ell^\infty(L^1)} \Ee
which follows since the $L^1\to L^1$ operator norm of $2^{\ell (\la+\frac{d+1}{2} )}h_{\la,\ell}(\rho(D))$ is $O(2^{\ell d})$. They will be proven in \S\ref{sec:p=p0} and \S\ref{sec:p=1} respectively.

Theorem \ref{thm:linearversion} now follows by a standard complex interpolation argument based on the interpolation formula
\[[ \ell^\infty (L^{u_0}),\ell^\infty(L^{u_1})]_\vartheta= \ell^\infty(L^u)\] with $(1-\vartheta)/u_0+\vartheta/u_1=1/u$ which holds if the $\ell^\infty$ norms are taken on a finite set (as in our applications).\footnote{One has to use the second $[\cdot,\cdot]^\theta$ method by Calder\'on in the general case.}
We first interpolate \eqref{eq:p=p0} for $p=\frac{2(d+1)}{d+3}$ and \eqref{eq:p=1} to obtain
\Be\label{eq:AsQ-p1}\|\sA_{s,\fQ} F\|_v \lc 2^{s(\frac du-\eps(u,v))} \mu(\fQ)^{\frac 1v} \|F\|_{\ell^\infty(L^{u})}, \text{ for $1<u<\tfrac{2(d+1)}{d+3}$, $ u < v < \tfrac{d-1}{d+1}u'$} \Ee for some $\eps(u,v)>0.$
Now fix $p$ and $r$ with $\frac{2(d+1)}{d+3} \le p<p_\circ$ and $p<r<\rstar$. We can then find pairs $(u,v)$ such that $1<u<\frac{2(d+1)}{d+3}$, $u < v < \frac{d-1}{d+1}u'$ and $(p_1,r_1)$ such that $\frac{2(d+1)}{d+3} \leq p_1 < p_\circ$, $p_1 \leq r_1 < r_*(p_1,p_\circ, r_\circ)$, with $(\frac{1}{p}, \frac {1}{r})$ in the open line segment connecting $(\frac{1}{u},\frac{1}{v})$ with $(\frac{1}{p_1},\frac{1}{r_1})$, i.e. $(\frac 1p, \frac 1r)=(1-\vth) (\frac{1}{p_1},\frac{1}{r_1})+ \vth (\frac{1}{u},\frac{1}{v})$ for some $\vth\in (0,1)$.\footnote{Here $(p_1,r_1)$ should be thought of being sufficiently close to $(p_\circ,r_\circ)$; note that $r_*(p_\circ, p_\circ, r_\circ)=r_\circ$.}
Now interpolate \eqref{eq:p=p0} for $(\frac 1{p_1}, \frac 1{r_1})$ with
\eqref{eq:AsQ-p1} for $(\frac{1}{u},\frac{1}{v})$. We then obtain \eqref{eq:eps-pr-gain} for the pair $(\frac 1p, \frac 1r)$ with $\eps(p,r):=\vth \eps(u,v)>0$.

\subsection{Proof of Proposition \ref{prop:p=p0}}\label{sec:p=p0}

As mentioned above, this is essentially a reformulation of $\hypo$ in which one replaced the normalized bumps $\chi(2^\ell(1-\varrho) )$ by $2^{\ell \la}h_{\ell,\la} (\varrho) $. The technical lemma that takes care of it is the following.
\begin{lemma} \label{lem:conseq-of-hyp}
Let $d \geq 2$, $1 \leq p \leq r < \infty$ and assume that $\VV(p,r)$ holds. Then for all $s \geq 0$,
\Be\label{eq:new-form-s}
\Big\|\sum_{\ell\ge s} A_{\la,\ell}
\big [ \sum_{Q\in \fD_{\ell-s}}f_{\ell,Q}\bbone_Q \big]\Big\|_r
\lc 2^{s\frac {d}{p}}
\Big(\sum_\ell \sum_{Q} |Q|\|f_{\ell,Q}\|_{p} ^{r} \Big)^{\frac 1r}\,.
\Ee
\end{lemma}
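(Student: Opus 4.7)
The plan is to realize each multiplier $2^{\ell\la}h_{\la,\ell}(\rho(\xi))$ as a rapidly convergent sum of dyadic bumps of the form $\chi(2^j(1-\rho(\xi)))$ with $\chi\in\cY_M$, so that the hypothesis $\VV(p,r)$ can be invoked directly. Fix a smooth $\tilde\eta$ supported in $(1/2,2)$ giving a partition of unity $\sum_{j\in\bbZ}\tilde\eta(2^jt)=1$ for $t>0$. Using the support of $h_{\la,\ell}$ in $\{1/2<\rho<2\}$ and splitting according to the sign of $1-\rho$, I would write
\[
h_{\la,\ell}(\rho)=2^{-\ell\la}\sum_{j\ge 0}\bigl[\chi^+_{\ell,j}(2^j(1-\rho))+\chi^-_{\ell,j}(2^j(\rho-1))\bigr],
\]
with $\chi^{\pm}_{\ell,j}(u):=2^{\ell\la}h_{\la,\ell}(1\mp 2^{-j}u)\tilde\eta(u)$ supported in $(1/2,2)$. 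The pointwise estimate \eqref{eq:hlaellnear1} controls $\chi^{\pm}_{\ell,j}$ in the region $j\le \ell$ (outside the essential support of $h_{\la,\ell}$), while the vanishing estimate \eqref{eq:hlaellcancat1-deriv} handles $j>\ell$ (inside the plateau of $h_{\la,\ell}$); combining them yields $\|\chi^{\pm}_{\ell,j}\|_{C^M}\lc 2^{-c|\ell-j|}$ for some $c>0$ that can be made arbitrarily large by taking $N_1$ and $N_\circ$ large in the construction of $h_{\la,\ell}$.

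Substituting this decomposition into the left-hand side of \eqref{eq:new-form-s} and re-indexing via the scale offset $m:=\ell-j$, the expression rearranges as $\sum_m\mathcal{T}_m$ where
\[
\mathcal{T}_m=\sum_{j\ge\max(0,s-m)}2^{(j+m)\frac{d+1}{2}}\chi^{\pm}_{j+m,j}(\pm 2^j(1-\rho(D)))\Bigl[\sum_{Q\in\fD_{j+m-s}}f_{j+m,Q}\bbone_Q\Bigr].
\]
For each fixed $m$ the cubes $\fD_{j+m-s}$ do not match the scale $\fD_j$ required by $\VV(p,r)$, so I would regroup. For $Q'\in\fD_j$ define an auxiliary $\tilde f^{(j+m)}_{Q'}$ with $\supp\subset Q'$: when $m\ge s$, set $\tilde f^{(j+m)}_{Q'}:=f_{j+m,Q(Q')}\bbone_{Q'}$ where $Q(Q')\in\fD_{j+m-s}$ is the parent; when $m<s$, set $\tilde f^{(j+m)}_{Q'}:=\sum_{Q\subset Q',\,Q\in\fD_{j+m-s}}f_{j+m,Q}\bbone_Q$. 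In both cases $\sum_{Q'}\tilde f^{(j+m)}_{Q'}=\sum_{Q\in\fD_{j+m-s}}f_{j+m,Q}\bbone_Q$, and a short Hölder computation (using $r/p\ge 1$) yields
\[
\sum_{j}\sum_{Q'\in\fD_j}|Q'|\|\tilde f^{(j+m)}_{Q'}\|_p^r\lc 2^{d\alpha(m)}\sum_{\ell,Q}|Q|\|f_{\ell,Q}\|_p^r
\]
with $\alpha(m)=(s-m)r/p$ if $m\le s$ and $\alpha(m)=-(m-s)$ if $m>s$.

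Applying $\VV(p,r)$ to the normalized family $\{2^{c|m|}\chi^{\pm}_{j+m,j}\}_j\subset \cY_M$ and inserting the estimate above gives
\[
\|\mathcal{T}_m\|_r\lc 2^{-c|m|}\,2^{m\frac{d+1}{2}}\cdot 2^{d\alpha(m)/r}\Bigl(\sum_{\ell,Q}|Q|\|f_{\ell,Q}\|_p^r\Bigr)^{1/r}.
\]
Choosing $c$ sufficiently large (which is free in the construction), the three ranges $m<0$, $0\le m\le s$, and $m>s$ contribute factors that are geometric in $|m|$ or $|m-s|$ about the value at $m=s$, where the prefactor equals exactly $2^{sd/p}$; summing over $m\in\bbZ$ yields the stated bound. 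The main obstacle is the first step: one needs the bump decomposition of $h_{\la,\ell}$ uniformly in $\ell$ with decay $2^{-c|\ell-j|}$ for arbitrarily large $c$, and here the strong vanishing of $h_{\la,\ell}$ at $\rho=1$ provided by Corollary \ref{cor:cancellations} is what allows one to control the $j>\ell$ region, while the Schwartz-type tails from \eqref{eq:hlaellnear1} handle $j<\ell$.
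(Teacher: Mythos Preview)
Your approach is essentially the paper's: decompose $2^{\ell\la}h_{\la,\ell}$ dyadically in $|1-\rho|$, normalize the resulting bumps into $\cY_M$ using Lemma~\ref{lem:ptwisebdhellla}, regroup the cubes to match the bump scale, and apply $\VV(p,r)$. The paper separates the regions $j\le\ell$ and $j>\ell$ with two parameters $m_1,m_2$ and first treats the case $s=0$ before reducing general $s$ to it, but this is only organizational.

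There is one bookkeeping slip in your final summation. At $m=s$ your prefactor is $2^{-cs}\cdot 2^{s(d+1)/2}\cdot 2^{0}=2^{s((d+1)/2-c)}$, not $2^{sd/p}$; for large $c$ this is tiny. The maximum of your exponent $E(m)=m\tfrac{d+1}{2}-c|m|+\tfrac{d}{r}\alpha(m)$ is in fact attained at $m=0$, where $\alpha(0)=sr/p$ gives $E(0)=sd/p$, and the sum over $m\in\bbZ$ is then controlled by $2^{sd/p}$ as you want. Also, on the $j>\ell$ side the decay exponent $c$ is not free: it is bounded by roughly $N_\circ+1$, where $N_\circ$ is the cancellation order fixed once and for all in Corollary~\ref{cor:cancellations}. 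You cannot take it arbitrarily large, but the paper's standing assumption $N_\circ>d$ already gives enough decay to sum (one only needs $c$ to beat $d/p-(d+1)/2$ and $(d+1)/2$, both of which are at most $d$).
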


If $f_{\ell,Q} =F_Q$ for $Q\in \fQ_{\ell-s}$ (and $0$ otherwise) then the right-hand side in \eqref{eq:new-form-s} is clearly bounded by $2^{s \frac{d}{p}} \mu(\fQ)^{\frac{1}{r}} \| F \|_{\ell^\infty(L^p)}$, implying thus Proposition \ref{prop:p=p0}.

\begin{proof}

We first examine the case $s=0$.
Let $\eta\in C^\infty_c$ be supported in $(1/2,2)$ such that $\sum_{k\in \bbZ} \eta(2^k u)=1$ for $u>0$.
In view of the support of $h_{\lambda, \ell}$, decompose the convolution kernel of $A_{\la,\ell}$ using
\begin{subequations}\label{eq:thin-annuli}
\Be 2^{\ell (\lambda +\frac{d+1}2)} h_{\la,\ell}=\sum_{0 \leq m_1< \ell} \theta_{\la,\ell,m_1}+ \sum_{m_2>0} \widetilde \theta_{\la, \ell,m_2} \Ee where
\begin{align} \theta_{\la, \ell,m_1} (\varrho) &=
2^{\ell (\la +\frac{d+1}2 )}h_{\la,\ell} (\varrho)
\eta(2^{\ell-m_1} (1-\varrho) ),
\\
\widetilde \theta_{\la,\ell,m_2}(\varrho)&=
2^{\ell (\la+\frac{d+1}2 )} h_{\la,\ell}(\varrho)
\eta(2^{\ell+m_2} (1-\varrho)) .
\end{align}
\end{subequations}
This decomposition is done to exploit the hypothesis $\VV(p,r)$, since the $\theta_{\lambda, \ell,m_1}$ and $\widetilde{\theta}_{\lambda, \ell,m_2}$ are now compactly supported. Our goal is to show the inequalities
\begin{align} \label{eq:the-n-terms}
\Big\|
\sum_{\ell>m_1} \sum_{Q\in \fD_{\ell} } \theta_{\la,\ell,m_1} (\rho(D)) [f_{\ell,Q} \bbone_{Q} ]\Big\|_r \lc_{N}
2^{-m_1 (N+\la(r) )} \Big(\sum_\ell \sum_{Q\in \fD_\ell} 2^{\ell d} \|f_{\ell,Q} \|_{p}^{r}\Big)^{1/r}
\\
\label{eq:the-m-terms}
\Big\| \sum_{\ell>0} \sum_{Q\in \fD_\ell} \widetilde \theta_{\la,\ell,m_2} (\rho(D)) [g_{\ell,Q}\bbone_Q] \Big\|_r
\lc 2^{m_2(\la(p) -N_\circ)}
\Big(\sum_\ell \sum_{Q\in \fD_{\ell}} 2^{\ell d}
\|f_{\ell,Q} \|_{p}^ {r} \Big)^{1/r} .
\end{align}
Combining the estimates \eqref{eq:the-n-terms} and \eqref{eq:the-m-terms} (and recalling that $N_\circ>\la(p) $)
yields \eqref{eq:new-form-s} for $s=0$.

Using Lemma \ref{lem:ptwisebdhellla} we can write
\begin{subequations}
\begin{align} \theta_{\la,\ell,m_1}(\varrho) &=c_{N,\la} 2^{-m_1 N} 2^{ \ell \frac{d+1}2 }\chi_{\la,\ell,m_1} (2^{\ell-m_1} (1-\varrho)) ,
\\
\widetilde \theta_{\la,\ell,m_2} (\varrho)&=\widetilde c_{N_\circ,\la} 2^{-m_2N_\circ} 2^{ \ell \frac{d+1}2 } \widetilde \chi_{\la,\ell,m_2} (2^{\ell+m_2} (1- \varrho)),
\end{align} for suitable $\chi_{\la,\ell,m_1}, \,\widetilde \chi_{\la,\ell,m_2} \in \cY_M$.
\end{subequations}

For each $R'\in \fD_{\ell-m_1} $ we let $Q(R')$ be the unique $Q\in \fD_\ell$ that contains $R'$.
Writing $\bbone_Q=\sum_{R'\subset Q} \bbone_{R'} $ we then have
\begin{multline*}
\Big \| \sum_{\ell>m_1} \sum_{Q\in \fD_{\ell} } \theta_{\la,\ell,m_1} (\rho(D)) [f_{\ell,Q} \bbone_{Q} ] \Big\|_r
= c_{N,\la} 2^{-m_1 (N-\frac{d+1}{2} )} \times\\
\Big\|
\sum_{\ell>m_1} \sum_{Q\in \fD_\ell} \sum_{\substack {R'\in\fD_{\ell-m_1}}} 2^{(\ell-m_1)\frac{d+1}{2}}\chi_{\la,\ell,m_1} ( 2^{\ell-m_1} (1-\rho(D)) )[f_{\ell-m_1,Q(R')} \bbone_{R'} ]\Big\|_r
\end{multline*} and we can use the hypothesis $\VV(p,r)$ to bound the expression on the right-hand side by a constant times
\begin{align*}
&2^{-m_1 (N - \frac{d+1}{2}) } \Big(\sum_{\ell>m_1} \sum_{Q\in \fD_\ell} \sum_{\substack{R'\in \fD_{\ell-m_1} \\R'\subset Q}}
\big[ 2^{(\ell-m_1) d/r }\|f_{\ell,Q} \bbone_{R'} \|_{p} \big]^{r}\Big)^{1/r}
\\
&
\lc
2^{-m_1 (N +\frac dr- \frac{d+1}{2}) } \Big(\sum_{\ell} \sum_{Q\in \fD_\ell}
\big[ 2^{\ell d/r }\|f_{\ell,Q} \bbone_{Q} \|_{p} \big]^{r}\Big)^{1/r}
\end{align*}
where we have used $r\ge p$,
$\sum_{R'\in \fD_{\ell-m_1} }\|f_{\ell,Q} \bbone_ {R'} \|_{p}^r\le \|f_{\ell,Q}\|_{p}^r $ for all $Q\in \fD_\ell$. This finishes the proof of \eqref{eq:the-n-terms}.

We now turn to the proof of \eqref{eq:the-m-terms}.
To apply Lemma \ref{lem:ptwisebdhellla} we label $j=\ell+m_2$, and set, for $R'\in \fD_j$,
\[ g^{m_2}_{j,R'} =
\sum_{ \substack{Q\in \fD_{j-m_2}: Q\subset R' } }f_{j-m_2,Q} \bbone_Q. \]
Then
\begin{align*}
\Big\| \sum_{\ell>0} \sum_{Q\in \fD_\ell} & \widetilde \theta_{\la,\ell,m_2} (\rho(D)) [f_{\ell,Q}\bbone_Q]
\Big\|_r \\
&
=2^{-m_2(N_\circ+\frac{d+1}2)}\Big\| \sum_{j>m_2} \sum_{R'\in \fD_j} 2^{j\frac{d+1} 2} \widetilde \chi_{\la,j-m_2,m_2}(2^j(1-\rho(D))) g^{m_2}_{j,Q} \Big\|_r
\\&\lc 2^{ -m_2(N_\circ +\frac{d+1}2)} \Big(\sum_{j>m_2} \sum_{R'\in \fD_j} \big [2^{jd} \big\|
g^{m_2}_{j,R'}
\big\|_{p} \big]^r \Big)^{\frac 1r}
\end{align*}
where we applied the hypothesis $\VV(p,r)$ to get the bound in the third line.
Now for $j>m_2$,
\begin{align*}
\Big(\sum_{R'\in \fD_j} 2^{jd} \big\|
g^{m_2}_{j,R'}
\big\|_{p}^r\Big)^{\frac 1r}
&=\Big(\sum_{R'\in \fD_j} 2^{jd} \Big( \sum_{\substack {Q\in \fD_{j-m_2}\\Q\subset R'}} \|f_{j-m_2,Q} \|_{p} ^{p} \Big)^{\frac r{p}} \Big)^{\frac 1r}
\\
&\lc 2^{m_2 d/p} \Big(2^{ (j-m_2)d}\sum_{Q\in \fD_{j-m_2} } \|f_{j-m_2,Q} \|_{p} ^{r} \Big)^{\frac 1r} ,
\end{align*}
by H\"older's inequality in the inner $Q$-sum.
Combining the above
we get \eqref{eq:the-m-terms}.

Finally we consider the case $s>0$. Define
for $R'\in \fD_\ell$, $F_{\ell,R'}=\sum_{Q\subset R'} f_{\ell-s,Q}$ where the sum is taken over the cubes in
$\fD_{\ell-s} $ which are subcubes of $R'$. Note that
\[\big\| F_{\ell, R'} \|_{p} = \Big(\sum_{\substack {Q\in \fD_{\ell-s}\\ Q\subset R'}} \|f_{\ell-s,Q} \|_{p}^{p} \Big)^{\frac 1{p}}
\le 2^{sd(\frac 1{p}-\frac 1r)}
\Big(\sum_{\substack {Q\in \fD_{\ell-s}\\ Q\subset R'}} \|f_{\ell-s,Q} \|_{p}^{r} \Big)^{\frac 1{r}}.
\] Applying the result for $s=0$ proved above to the family of functions
$\{F_{\ell, R'}\}$ we get that the left-hand side of \eqref{eq:new-form-s} is dominated by a constant times
\begin{align*}
&\Big(\sum_{\ell\ge s} \sum_{R'\in \fD_\ell} 2^{\ell d} \|F_{\ell,R'} \|_{p} ^r\Big)^{1/r}
\lc 2^{ sd/{p} } \Big(\sum_{\ell\ge s} \sum_{Q\in \fD_{\ell-s}} 2^{ (\ell-s) d} \|f_{\ell-s,Q} \|_{p} ^r\Big)^{1/r}
\end{align*}
and we get \eqref{eq:new-form-s} for all $s\ge 0$.
\end{proof}

\subsection{Proof of Proposition \ref{prop:p=1}} \label{sec:p=1}
It follows from the inequalities
\begin{align}
\label{eq:p=1-r=2}
\|\sA_{s,\fQ}F\|_2 &\lc 2^{s(3d+1)/4} \mu(\fQ)^{1/2}
\|F\|_{\ell^\infty(L^1)}
\\
\label{eq:p=1--r}
\|\sA_{s, \fQ} F\|_{r_1} &\lc 2^{sd} \mu(\fQ)^{1/r_1}\|F\|_{\ell^\infty(L^1)}, \quad 1\le r_1<\infty.
\end{align}
Indeed, if $2\le r<\infty$ we choose $r_1>r$ large in \eqref{eq:p=1--r} and obtain \eqref{eq:p=1} by taking a mean of \eqref{eq:p=1-r=2} and \eqref{eq:p=1--r}.
Similarly (but less interesting for our purpose) one gets \eqref{eq:p=1} for $1<r\le 2$ by taking a mean of \eqref{eq:p=1-r=2} and \eqref{eq:tr-L1}. We note that our argument for \eqref{eq:p=1-r=2} does not use the disjointness property of the family of cubes $\fQ$, but the argument for \eqref{eq:p=1--r} strongly relies on it.

\subsubsection{The case $p=1$, $r=2$: proof of \eqref{eq:p=1-r=2}}
We will first formulate a version of \eqref{eq:p=1-r=2} for linear combinations of radial bump multipliers $\chi(2^\ell(1-\rho))$, and then subsequently replace the radial bumps by the multipliers $2^{\ell\la} h_{\la,\ell} \circ\rho$ to get \eqref{eq:p=1-r=2}.

\begin{lemma}\label{lem:CSnew} Let $d \geq 2$ and $\{\chi_j\}_j \subseteq \cY_M$ for large $M\gg 10d$.
For all $s\ge 0$,
\Be \label{eq:CS1}\Big \|\sum_{j\ge 2s} 2^{j\frac{d+1}2}
\chi_j(2^j(1-\rho(D))) \big[\sum_{Q\in\fQ_{j-s} } F_Q\bbone_Q\big] \Big \|_2 \lc 2^{s \frac{d+1}2} \mu(\fQ)^{\frac 12} \| F \|_{\ell^\infty (L^1)}
\Ee
holds for all finite $\fQ \subset \fD_{\ge 0}$.
Moreover, for $j\ge 0$, $0<L\le j/2$
\Be\label{eq:CS2} \Big \| 2^{j\frac{d+1}2} \chi_j(2^j(1-\rho(D))) \big[ \sum_{Q\in\fQ_{L}} F_Q\bbone_Q\big] \Big \|_2 \lc 2^{j\frac{3d+1}{4} -Ld}
\mu(\fQ)^{\frac 12} \| F \|_{\ell^\infty (L^1)}.
\Ee
\end{lemma}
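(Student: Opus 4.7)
The two inequalities cover complementary regimes. The constraint $j \ge 2s$ in \eqref{eq:CS1} reads $L(Q) = j-s \ge j/2$, so the cubes $Q \in \fQ_{j-s}$ have side at or above the ``curvature scale'' $2^{j/2}$ of the annulus multiplier $\chi_j(2^j(1-\rho))$; in \eqref{eq:CS2} the complementary bound $L\le j/2$ puts cubes strictly below this scale. The plan is to establish \eqref{eq:CS1} by combining Fourier-side orthogonality in $j$ with a matched-scale factorization that reduces matters to Lemma \ref{lem:ST-style}, and to establish \eqref{eq:CS2} by a C\'ordoba-Fefferman cap decomposition tailored to the curvature of $\partial \Omega$.

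For \eqref{eq:CS1}, set $A_j := 2^{j(d+1)/2}\chi_j(2^j(1-\rho(D)))$ and $g_j := \sum_{Q\in\fQ_{j-s}}F_Q\bbone_Q$. The supports of the multipliers $\chi_j(2^j(1-\rho(\cdot)))$ lie in $\rho$-annuli of thickness $\sim 2^{-j}$ which are pairwise disjoint for non-adjacent $j$, so Plancherel (with adjacent cross-terms absorbed by Cauchy-Schwarz) gives $\|\sum_{j\ge 2s}A_j g_j\|_2^2 \lesssim \sum_{j\ge 2s}\|A_j g_j\|_2^2$. Fix a bump $\tilde \eta \in C_c^\infty(\bbR)$ with $\tilde\eta\equiv 1$ on $[-2,2]$. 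Since $2^{j-s}(1-\rho) \in (2^{-s-1},2^{-s+1})\subseteq[-2,2]$ on the support of $\chi_j(2^j(1-\rho))$, we obtain the operator factorization
\[ A_j = B_j\,\widetilde A_{j-s},\quad B_j := 2^{s(d+1)/2}\chi_j(2^j(1-\rho(D))),\quad \widetilde A_{j-s} := 2^{(j-s)(d+1)/2}\tilde\eta(2^{j-s}(1-\rho(D))). \]
The $L^2\to L^2$ norm of $B_j$ is at most $2^{s(d+1)/2}$, while Lemma \ref{lem:ST-style} applied at the matched scale $j-s$ with $\vartheta_{j-s}=\tilde\eta$ (rescaled by a fixed $M$-dependent constant to meet the decay hypothesis) yields $\|\widetilde A_{j-s}g_j\|_2 \lesssim \mu(\fQ_{j-s})^{1/2}\|F\|_{\ell^\infty(L^1)}$. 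Chaining these and summing over $j$ in $\ell^2$ produces \eqref{eq:CS1}.

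For \eqref{eq:CS2} the matched-scale factorization no longer works, so instead decompose $\chi_j(2^j(1-\rho))=\sum_\kappa m_{j,\kappa}$, where the $\sim 2^{j(d-1)/2}$ pieces are supported in disjoint cap-slabs of tangential width $2^{-j/2}$ and radial width $2^{-j}$; this is possible because $\partial\Omega$ has nonvanishing Gaussian curvature. Each $m_{j,\kappa}(D)$ has kernel $K_{j,\kappa}$ essentially concentrated in a dual tube $T_\kappa$ of dimensions $\sim 2^{j/2}\times\cdots\times 2^{j/2}\times 2^j$, and satisfies $\|m_{j,\kappa}(D)\|_{L^1\to L^2}\lesssim 2^{-j(d+1)/4}$. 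For each $\kappa$ I will partition $\bbR^d$ into essentially disjoint cells $\{C_\alpha^\kappa\}$ of dimensions comparable to $T_\kappa$; since $2^L\le 2^{j/2}$, each cube $Q\in\fQ_L$ lies in a unique cell, and the disjointness of cubes at level $L$ gives the packing bound $\#\{Q\in C_\alpha^\kappa\}\lesssim 2^{j(d+1)/2-Ld}$. Triangle inequality and the $L^1\to L^2$ bound within a cell, combined with near-orthogonality across cells and the elementary inequality $\sum_\alpha(\#)^2\le(\max_\alpha\#)(\sum_\alpha\#)$, give
\[ \Big\|\sum_{Q\in\fQ_L}m_{j,\kappa}(D) F_Q\bbone_Q\Big\|_2^2\lesssim 2^{-j(d+1)/2}\cdot 2^{j(d+1)/2-Ld}\cdot \#\fQ_L\cdot\|F\|_{\ell^\infty(L^1)}^2 = 2^{-Ld}\#\fQ_L\|F\|^2. \]
Cap orthogonality in $\ell^2_\kappa$ sums the $\sim 2^{j(d-1)/2}$ contributions, and multiplying by the normalization $2^{j(d+1)}$ yields $\|A_j \sum_{Q\in\fQ_L}F_Q\bbone_Q\|_2^2\lesssim 2^{j(3d+1)/2-Ld}\#\fQ_L\|F\|^2$. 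Rewriting $\#\fQ_L=2^{-Ld}\mu(\fQ_L)$ recovers \eqref{eq:CS2}.

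The principal technical point will be making the cell-wise near-orthogonality in the cap decomposition precise, since the kernels $K_{j,\kappa}$ have only rapid (not compact) off-tube decay. The cross-cell inner products must be controlled by this decay, quantified by the smoothness assumption $M\gg 10d$ on $\chi_j\in\cY_M$; this is standard but delicate C\'ordoba-Fefferman bookkeeping, typically handled by introducing smooth cell cutoffs and summing the tails in a geometrically convergent sum of lattice shifts.
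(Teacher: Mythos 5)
Your proposal is correct, but it is a hybrid: the argument you give for \eqref{eq:CS2} is essentially the paper's own proof, while your argument for \eqref{eq:CS1} takes a genuinely different route. The paper treats \emph{both} estimates with a single Christ--Sogge cap decomposition \eqref{Cordoba-dec}: it reduces to the per-cap bounds \eqref{eq:CS1nu}--\eqref{eq:CS2nu}, which are proved by pairing the kernel majorant \eqref{eq:nonisotropicnew} with the cube-counting bounds \eqref{eq:Kjnubound}--\eqref{eq:cardcubesbd} over the regions $\cR^{n_1,n_2}_{j,\nu,s}$; your cell partition of $\bbR^d$ into $2^{j/2}\times\cdots\times2^{j/2}\times2^j$ boxes, the packing count $2^{j(d+1)/2-Ld}$, and the Cauchy--Schwarz step $\sum_\alpha N_\alpha^2\le(\max_\alpha N_\alpha)\sum_\alpha N_\alpha$ reproduce exactly the content of \eqref{eq:CS2nu}, and the tail bookkeeping you flag is precisely what the paper absorbs into the rapid off-tube decay in \eqref{eq:nonisotropicnew}, so there is no gap there (disjointness of the cubes in $\fQ_L$ is automatic since they belong to a single dyadic generation). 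For \eqref{eq:CS1}, by contrast, your matched-scale factorization $2^{j\frac{d+1}2}\chi_j(2^j(1-\rho(D)))=2^{s\frac{d+1}2}\chi_j(2^j(1-\rho(D)))\circ 2^{(j-s)\frac{d+1}2}\tilde\eta(2^{j-s}(1-\rho(D)))$, combined with the trivial $L^2$ bound for the first factor and Lemma \ref{lem:ST-style} (at $p=1$, single scale $j-s$) for the second, is valid and shorter than the paper's cap argument; note it only needs $j\ge s$, and for $s=j-L$ it yields the bound $2^{(j-L)\frac{d+1}2}\mu(\fQ_L)^{1/2}\|F\|_{\ell^\infty(L^1)}$, which is at most $2^{j\frac{3d+1}4-Ld}\mu(\fQ_L)^{1/2}\|F\|_{\ell^\infty(L^1)}$ exactly when $L\le j/2$ --- so your first argument alone already implies all of Lemma \ref{lem:CSnew}, making your cap/cell argument for \eqref{eq:CS2} redundant. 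The trade-off is that Lemma \ref{lem:ST-style} rests on the Stein--Tomas restriction theorem and hence on the curvature of $\partial\Omega$, whereas the paper's Christ--Sogge argument is deliberately curvature-free (as remarked in its proof); since curvature is assumed throughout the paper and Lemma \ref{lem:ST-style} is available there, your route is legitimate, just less robust.
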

An immediate corollary (unifying and slightly weakening \eqref{eq:CS1}, \eqref{eq:CS2}) is

\begin{cor}
\label{cor:CSnew}
For $\ka\ge 0$ we have \Be \label{eq:CS3new}\Big \|\sum_{j\ge \ka} 2^{j\frac{d+1}2}
\chi_j(2^j(1-\rho(D))) \big[\sum_{Q\in\fQ_{j-\ka} } F_Q \bbone_Q\big] \Big \|_2 \lc 2^{\ka\frac{ 3d+1}4} \mu(\fQ)^{\frac 12} \| F \|_{\ell^\infty (L^1)}.
\Ee
\end{cor}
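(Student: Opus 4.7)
The plan is to split the sum in $j$ at the threshold $j = 2\kappa$ and apply the two complementary estimates of Lemma~\ref{lem:CSnew} to each of the two resulting pieces. Write
\[
\sum_{j\ge \ka}(\cdots) \;=\; \sum_{j\ge 2\ka}(\cdots) \;+\; \sum_{j=\ka}^{2\ka-1}(\cdots).
\]

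For the first sub-sum I would apply \eqref{eq:CS1} directly with $s=\ka$: the hypothesis $j\ge 2s$ is exactly $j\ge 2\ka$, and the cubes appearing are $\fQ_{j-\ka}=\fQ_{j-s}$, so \eqref{eq:CS1} immediately yields a bound of $2^{\ka(d+1)/2}\mu(\fQ)^{1/2}\|F\|_{\ell^\infty(L^1)}$. Since $\tfrac{d+1}{2}\le \tfrac{3d+1}{4}$ for $d\ge 1$, this is dominated by the right-hand side of \eqref{eq:CS3new}.

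For the second sub-sum I would use the triangle inequality in $L^2$ and apply \eqref{eq:CS2} to each individual $j\in\{\ka,\ldots,2\ka-1\}$ with $L=j-\ka$. The constraint $L\le j/2$ is equivalent to $j\le 2\ka$, which is exactly the range under consideration, so the hypothesis is satisfied. (The boundary case $L=0$, occurring at $j=\ka$, is not literally covered by \eqref{eq:CS2} as stated, but the same bound for a single scale follows directly from Lemma~\ref{lem:ST-style} applied to $\chi_\ka(2^\ka(1-\rho(\cdot)))$, giving a contribution of size $2^{\ka(3d+1)/4}\mu(\fQ_0)^{1/2}\|F\|_{\ell^\infty(L^1)}$.) Summing the bounds,
\[
\Bigl\|\sum_{j=\ka}^{2\ka-1}(\cdots)\Bigr\|_2
\;\lc\; \sum_{j=\ka}^{2\ka-1} 2^{j\frac{3d+1}{4}-(j-\ka)d}\,\mu(\fQ)^{1/2}\|F\|_{\ell^\infty(L^1)}.
\]
The exponent simplifies to $\ka d - j\tfrac{d-1}{4}$, which is strictly decreasing in $j$ for $d\ge 2$, so the geometric series in $j$ is dominated (uniformly in $\ka$) by its initial term $j=\ka$, giving $2^{\ka(d-(d-1)/4)}=2^{\ka(3d+1)/4}$, as required.

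Adding the two estimates completes the proof. The step requiring the most care is the bookkeeping of exponents in the second range, where one must verify that the coefficient of $j$ in the exponent is negative (which fails for $d=1$ but holds for the relevant case $d\ge 2$) so that the geometric sum is controlled by its first term; the only subtlety is the edge case $L=0$ for the innermost term, which is handled separately by a single-scale Stein--Tomas-type bound.
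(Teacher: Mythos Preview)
Your approach is exactly the paper's: split at $j=2\kappa$, use \eqref{eq:CS1} for $j\ge 2\kappa$, and sum the single-scale bounds from \eqref{eq:CS2} over $\kappa\le j<2\kappa$; the exponent bookkeeping is identical.

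One small correction. You are right to flag the edge case $L=0$ at $j=\kappa$, since \eqref{eq:CS2} is stated only for $0<L\le j/2$ (the paper itself silently uses $L\in[0,j/2]$). However, your proposed patch via Lemma~\ref{lem:ST-style} does not give the claimed size $2^{\kappa(3d+1)/4}\mu(\fQ_0)^{1/2}$: Lemma~\ref{lem:ST-style} is formulated with the cube scale equal to the bump scale $j$, and trying to force unit cubes into it (either by setting $j=0$ with $\vth_0(s)=\chi_\kappa(2^\kappa s)$, which violates the derivative hypothesis \eqref{eq:vartheta_j assumption}, or by grouping unit cubes into parent cubes in $\fD_\kappa$) loses a factor $2^{\kappa(d-1)/4}$ and yields a term larger than the target bound. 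The correct fix is simply to observe that the Christ--Sogge argument proving \eqref{eq:CS2nu} in Lemma~\ref{lem:CSnew} goes through verbatim for $L=0$, so \eqref{eq:CS2} extends to $L=0$ with the same constant.
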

\begin{proof}
The term $\sum_{j\ge 2\ka} $ is handled using \eqref{eq:CS1} which gives the better $L^2$-bound $2^{\ka\frac{d+1}{2}}\mu(\fQ)^{1/2}$.
For $\sum_{\ka\le j< 2\ka}$ we apply \eqref{eq:CS2} with $L=j-\ka\in [0,j/2]$. By Minkowski's inequality the resulting $L^2$ bound is $ \sum_{\ka\le j\le 2\ka} 2^{\ka d -j\frac{d-1}{4} } \mu(\fQ)^{\frac 12} \| F \|_{\ell^\infty (L^1)} \lc 2^{\ka\frac{3d+1}{4} } \mu(\fQ)^{\frac 12} \| F \|_{\ell^\infty (L^1)}$. This gives \eqref{eq:CS3new}.
\end{proof}
\begin{proof}[Proof of Lemma \ref{lem:CSnew}]
Assume, without loss of generality, that $\| F \|_{\ell^\infty (L^1)} \leq 1$. We use arguments by Christ--Sogge \cite{ChristSogge1, ChristSogge2}; these do not require a curvature assumption on $\partial\Om$.
One can decompose
\Be \label{Cordoba-dec}
\chi_j(2^j(1-\rho(\xi)))=\sum_{\nu} \chi_{j,\nu}(\xi)
\Ee where the sum in $\nu$ is extended over an index set $\cI_j$ of cardinality $O(2^{j(d-1)/2}) $. Each multiplier $\chi_{j,\nu}$ is supported in a $(2^{-j}, 2^{-j/2}, \dots, 2^{-j/2})$ box essentially tangential to $\partial\Om$. Moreover, the supports of $\chi_{j,\nu}$ have bounded overlap in the sense that $\sum_{j,\nu}|\chi_{j,\nu}(\xi)|\lc 1$, and we have the kernel estimates
\begin{multline} \label{eq:nonisotropicnew} |\cF^{-1}[\chi_{j,\nu} ] (x)|+ |\cF^{-1}[|\chi_{j,\nu} |^2] (x)|\\
\lc K_{j,\nu}(x):=2^{-j(d+1)/2} (1+2^{-j} |\inn {x}{e_{j,\nu} }|)^{-N_1}
(1+2^{-j/2} |P_{j,\nu}^\perp (x)|)^{-N_2};
\end{multline}
here $e_{j,\nu}$ is a unit vector orthogonal to the surface $\partial\Om$ on a point in $\supp (\chi_{j,\nu})$ and $P_{j,\nu}^\perp $ is the orthogonal projection to the hyperplane orthogonal to $e_{j,\nu}$ and $N_1, N_2\le M$ (and by choosing $M$ large enough we may assume that $N_1>1$, $N_2>d-1$).

By orthogonality (due to the bounded overlap condition) we have
\begin{multline} \label{eq:use-ortho1new}
\Big \|\sum_{j>2s} 2^{j\frac{d+1}2}
\chi_j(2^j(1-\rho(D))) \big[\sum_{Q\in\fQ_{j-s} } F_Q\big] \Big \|_2 \\ \lc \Big(
\sum_{j>2s} 2^{j(d+1) }\sum_{\nu \in \cI_j}\Big\|
\chi_{j,\nu} (D) \big[\sum_{Q\in\fQ_{j-s} } F_Q\big] \Big \|_2^2\Big)^{1/2}
\end{multline}
and, similarly for every $j$, $L\le j/2$,
\begin{multline} \label{eq:use-ortho2new} \Big \| 2^{j\frac{d+1}2} \chi_j(2^j(1-\rho(D))) \big[\sum_{Q\in\fQ_{L}} F_Q\big] \Big \|_2^2 \\
\lc 2^{j(d+1)/2} \Big(\sum_{\nu\in \cI_j}
\Big \| \chi_{j,\nu}(D) \big[ \sum_{Q\in \fQ_L} F_Q\big] \Big \|_2^2 \Big)^{1/2}.
\end{multline}
We claim that for fixed $j\ge 0$, $\nu\in \cI_j$
\begin{align} \label{eq:CS1nu}&\Big \|\chi_{j,\nu} (D)\big[\sum_{Q\in\fQ_{j-s}} F_Q\big]\Big \|_2 \lc 2^{s\frac{d+1}{2} -j\frac{3d+1}{4} } \mu(\fQ_{j-s})^{\frac 12}, \quad& \text{ $s\le j/2$, }
\\ \label{eq:CS2nu}
&\Big \|\chi_{j,\nu}(D)\big[\sum_{Q\in\fQ_L} F_Q\big]\Big \|_2\lc 2^{-Ld} \mu(\fQ_L)^{\frac 12}, &\text{ $L\le j/2$}
\end{align}
and then the inequality \eqref{eq:CS1} follows from \eqref{eq:use-ortho1new} and \eqref{eq:CS1nu}, together with the bound $\#\cI_j\lc 2^{j(d-1)/2}$ and $\sum_j \mu(\fQ_{j-s})\le \mu(\fQ)$. Likewise
\eqref{eq:CS2} follows from \eqref{eq:use-ortho2new} and \eqref{eq:CS2nu}.

It remains to prove \eqref{eq:CS1nu} and \eqref{eq:CS2nu}.
For \eqref{eq:CS1nu} we use $\| F \|_{\ell^\infty(L^1)} \leq 1$ and write
\begin{align*} \Big\|
\chi_{j,\nu} (D) \big[\sum_{Q\in\fQ_{j-s} } F_Q\big] \Big \|_2^2 &= \iint \cF^{-1}[|\chi_{j,\nu}|^2] (x-y ) \sum_{Q\in\fQ_{j-s} } F_Q(y) \,\ud y
\sum_{Q\in\fQ_{j-s} } \overline{F_Q(x)} \,\ud x
\\ &\lc \#\fQ_{j-s} \, \sup_x \int K_{j,\nu}(x-y) \sum_{Q\in \fQ_{j-s}} |F_Q(y)|\ud y
\end{align*} with $K_{j,\nu}$ as in \eqref{eq:nonisotropicnew}.
For $x\in \bbR^d$ and $n_1,n_2 >0$, define the regions
\begin{align*}
\cR_{j,\nu,s}^{0,0}(x)&:=\{ y \in \R^d: |\inn{x-y}{e_{j,\nu} }| \le 2^j, \, |P^\perp_{j,\nu}(x-y)| \le 2^{j-s}\},
\\
\cR_{j,\nu,s}^{n_1,0}(x)&:=\{ y \in \R^d: 2^{j+n_1-1} \le |\inn{x-y}{e_{j,\nu}} | \le 2^{j+n_1} , \, |P^\perp_{j,\nu}(x-y)| \le 2^{j-s}\},
\\
\cR_{j,\nu,s}^{0,n_2}(x)&:=\{ y \in \R^d: |\inn{x-y}{e_{j,\nu}} | \le 2^{j} , \, 2^{j-s+n_2-1}\le |P^\perp_{j,\nu}(x-y)| \le 2^{j-s+n_2} \},
\\
\cR_{j,\nu,s}^{n_1,n_2}(x)&:=\{ y \in \R^d: 2^{j+n_1-1} \le |\inn{x-y}{e_{j,\nu}} | \le 2^{j+n_1} , \\
& \hspace{5cm} \, 2^{j-s+n_2-1}\le |P^\perp_{j,\nu}(x-y)| \le 2^{j-s+n_2}\}.
\end{align*}

Observe that $\#\fQ_{j-s} \lc 2^{(s-j)d} \mu(\fQ_{j-s})$. Moreover, for $s\le j/2$ we have for all $n_1,n_2 \geq 0$
\begin{align}
\label{eq:Kjnubound}
&\sup_x \sup_{y\in \cR_{j,\nu,s}^{n_1,n_2}(x) }|K_{j,\nu} (x-y)| \le C_{N_1,N_2} 2^{-j\frac{d+1}{2} } 2^{-n_1N_1- (n_2+\frac j2-s)N_2}\,, \\
\label{eq:cardcubesbd}
&\sup_x \#\{ Q\in \fQ_{j-s} : Q\cap \cR_{j,\nu,s}^{n_1,n_2}(x)\neq\emptyset\} \lc 2^{s+n_1+n_2(d-1)}\,.
\end{align}
Combining these observations and summing in $n_1,n_2 \geq 0$ yields \eqref{eq:CS1nu}.

In order to prove \eqref{eq:CS2nu} we argue similarly. For $L<j/2$
we get as above
\begin{align*} \Big\|
\chi_{j,\nu} (D) \big[\sum_{Q\in\fQ_{L} } F_Q\big] \Big \|_2^2 \lc \#\fQ_{L} \, \sup_x \int K_{j,\nu}(x-y) \sum_{Q\in \fQ_{L}} |F_Q(y)|\ud y.
\end{align*}
Now
use $\#\fQ_{L} \lc 2^{-Ld} \mu(\fQ_{L})$, \eqref{eq:Kjnubound} with $s=j/2$ and the estimate
\begin{equation*}
\sup_x \#\{ Q\in \fQ_{L} : Q\cap \cR_{j,\nu,j/2}^{n_1,n_2}(x)\neq\emptyset\} \lc 2^{-Ld} 2^{j \frac{d+1}{2}}2^{s+n_1+n_2(d-1)}.
\end{equation*}
This leads to \eqref{eq:CS2nu}.
\end{proof}

We next show how to replace the normalized bumps in Corollary \ref{cor:CSnew} by $2^{\ell \lambda} h_{\ell, \lambda}(\rho)$ to obtain \eqref{eq:p=1-r=2}. The argument is very similar to that in Lemma \ref{lem:conseq-of-hyp}.

\begin{proof}[Proof of \eqref{eq:p=1-r=2}]
Assume, without loss of generality, that $\| F \|_{\ell^\infty (L^1)} \leq 1$.
We decompose as in \eqref{eq:thin-annuli} and write
$2^{\ell(\lambda+\frac{d-1}{2})}h_{\lambda,\ell}(\varrho)$ as
\[C_M 2^{\ell\frac{d+1}{2} }\Big[ \sum_{m_1\le \ell} 2^{-m_1N} \chi_{\lambda, \ell, m_1}(2^{\ell-m_1} (1-\varrho)) +\sum_{m_2>0} 2^{-m_2N_\circ} \widetilde \chi_{\lambda,\ell,m_2}(2^{\ell+m_2}(1-\varrho))\Big]\]
with $\chi_{\lambda, \ell, m_1},\widetilde\chi_{\lambda, \ell, m_2}\in \cY_M$ and $M>100d$.
We then bound $\|\sA_{s,\fQ} F\|_2$ by a constant times
\begin{equation}\label{eq:dec I II III}
\sum_{m_1> s} 2^{-m_1( N-\frac{d+1}{2})} I_{m_1}+
\sum_{m_1=0}^s 2^{-m_1(N-\frac{d+1}{2})} II_{m_1}+
\sum_{m_2\ge 0} 2^{-m_2(\frac{d+1}{2}+N_\circ)} III_{m_2}
\end{equation}
where
\begin{align*}
I_{m_1}&= \Big\|\sum_{\ell\ge m_1 } \sum_{R\in \fD_{\ell-m_1} } \sum_{Q\in \fQ_{\ell-s}} 2^{(\ell-m_1) \frac{d+1}{2}} \chi_{\la,\ell,m_1}(2^{\ell-m_1} (1- \rho(D))) [F_{Q} \bbone_{R\cap Q} ] \Big\|_2,
\\
II_{m_1}&= \Big\|\sum_{\ell\ge s }
2^{(\ell-m_1) \frac{d+1}{2}} \chi_{\la,\ell,m_1}(2^{\ell-m_1} (1- \rho(D))) [ \sum_{Q\in \fQ_{\ell-s}} F_{Q} \bbone_Q] \Big\|_2,
\\
III_{m_2}&=\Big\|\sum_{\ell\ge s }
2^{(\ell+m_2) \frac{d+1}{2}} \widetilde \chi_{\la,\ell,m_2}(2^{\ell+m_2} (1- \rho(D))) [ \sum_{\substack{ Q\in \fQ_{\ell-s} }} F_{Q} \bbone_Q] \Big\|_2.
\end{align*}

Let $m_1>s$ and $Q^{m_1-s}(R)$ be the unique dyadic cube with sidelength $2^{L(R)+m_1-s} $ containing $R$. Let $\fR^{m_1-s}(\fQ)$ be the family of all $R\in \fD$ such that $L(R)\ge 0$ and such that $Q^{m_1-s}(R)$ belongs to $\fQ$.
Parametrizing $j=\ell-m_1$ the term $I_{m_1}$ can be rewritten as \[\|\sum_{j\ge 0 } \sum_{R\in \fR^{m_1-s}_j (\fQ) } 2^{j \frac{d+1}{2}} \chi_{\la,j+m_1,m_1}(2^j (1- \rho(D))) [f_R \bbone_{R} ] \|_2, \text{ with } f_R:= F_{Q^{m_1-s}(R)}. \] We now apply Corollary \ref{cor:CSnew} with $\kappa=0$ and note that $\mu(\fR^{m_1-s}_j(\fQ))=\mu(\fQ_{j+m_1})$. Since $\|f_R\|_1\le 1$, we obtain $I_{m_1} \lc \mu(\fQ)^{1/2}$ and thus the first term on the right-hand side of \eqref{eq:dec I II III} is bounded by $C\mu(\fQ)^{1/2}$, which is a better bound.

For the terms $II_{m_1}$ we have $s\ge m_1$.
Changing the summation variable to $j=\ell-m_1$ one can apply
Corollary \ref{cor:CSnew} with $\ka=s-m_1$ to get
$II_{m_1} \lc 2^{(s-m_1)\frac{3d+1}{4}} \mu(\fQ)^{1/2} $
Similarly for $III_{m_2}$, changing the summation variable to $j=\ell+m_2$ we see that
Corollary \ref{cor:CSnew} with $\ka=s+m_2$ yields the bound $III_{m_2} \lc 2^{(s+m_2)\frac{3d+1}{4}} \mu(\fQ)^{1/2} $. After summing we bound the
second and third terms on the right-hand side of \eqref{eq:dec I II III} both by $C2^{s(3d+1)/4} \mu(\fQ)^{1/2} $.
\end{proof}

\subsubsection{The case $p=1$, $r> 2$: proof of \eqref{eq:p=1--r}}
Since the inequality has already been proved (in fact improved) for $r\le 2$ we focus on the case for large $r$, and by interpolation it suffices to assume that $r>2$ is an integer.
We now rely on the kernel estimates
in \S\ref{sec:decomp}. We prove straightforward size estimates which are close to an argument used by Conde-Alonso, Culiuc, Di Plinio, and Ou \cite{conde-alonso-etal} in the analysis of rough singular integral operators.

We let $K_\ell=\cF^{-1}[ 2^{\ell(\frac{d+1}{2}+\la)} h_{\la,\ell}\circ\rho]$, the convolution kernel of the operator $A_{\la,\ell}$.
From Lemmas \ref{lem:error-kernelest} and \ref{lem:main-kernelest} we have
the kernel estimates
$|K_{\ell}(x)|\lc 1$ for $|x|\approx 2^{\ell}$,
$2^{-\ell d}|K_{\ell}(x)|\lc c_N 2^{-\ell N} $ for $|x|\le c_\circ 2^{\ell}$, and $2^{-\ell d} |K_{\ell}(x)|\lc\widetilde c_N|x|^{-N} $ for $|x|>C_\circ 2^\ell $.
We shall only use the slightly weaker bound
\Be\label{eq:Hlndef} 2^{-\ell d} |K_{\ell} (x)|\lc \sum_{n\ge 0} 2^{-n(N-d)} H_{\ell,n}(x) \quad \text{ with } H_{\ell,n}(x)=2^{-(\ell+n) d}\bbone_{\{|x|\le 2^{\ell+n}\}},\Ee where $N>d$. These favorable $L^\infty$ bounds are crucial for our argument; if we were to replace $2^{\ell \la}h_{\ell,\la}(\varrho)$ with $\chi(2^\ell(1-\varrho))$ for generic $\chi\in \cY_M$ they would no longer hold. Assume, without loss of generality, that $\| F \|_{\ell^\infty (L^1)} \leq 1$.
By \eqref{eq:Hlndef}
\[ \|\sA_{s,\fQ} F\|_r \lc \sum_{n=0}^\infty 2^{-n(N-d)} \Big \|\sum_{\ell>s} 2^{\ell d} H_{\ell,n} *\sum_{Q\in \fQ_{\ell-s} }|F_Q|\bbone_Q \Big \|_r.\] Setting
$G_{ L} : =2^{Ld} \sum_{Q\in \fQ_{L} } |F_Q |\bbone_Q $
the inequality $\|\sA_{s,\fQ} F\|_r \lc 2^{sd} \mu(\fQ)^{1/r} $ follows
from the bound
\Be \label{eq:no-cancellationsG}
\Big\|\sum_{\ell>s} H_{\ell,n} *G_{\ell-s} \Big\|_r^r \lc_{r} \mu(\fQ).
\Ee
Since $r$ is an integer we have that
the left-hand side in \eqref{eq:no-cancellationsG} is bounded by
\begin{equation}\label{eq:no-cancellations1-2}
C r!\sum_{\ell_1\ge\ell_2\ge \dots\ge \ell_r} \,
\int\limits_{ \substack{(y^1,\dots, y^r) \\ \in (\bbR^d)^r}} \int \prod_{i=1}^r \Big[H_{\ell_i,n} (x-y^{i} ) G_{\ell_i - s}(y^i) \Big] \ud x
\ud (y^1,\dots, y^r) .
\end{equation}

Observe that if there is an $x$ such that $\prod_{i=1}^r H_{\ell_i,n} (x-y^{i} ) \neq 0$
then we have $|y^i-y^{i+1}| \le 2^{\ell_{i}+n+1}$ for $i=1,\dots, r-1$. In this situation we also have the identity
$H_{\ell_i,n}(x-y^i) = H_{\ell_i,n}( \frac{ y^{i}-y^{i+1}}{2})$ for $1\le i\le r-1$; in addition, $\int H_{\ell_r,n} (x-y^r) \ud x\lc 1.$ We use these pointwise estimates and integrate in $x$ first
to bound \eqref{eq:no-cancellations1-2} by a constant times
\begin{multline}
\label{eq-no-cancellations2}
\int_{ (\bbR^d)^r} \sum_{\ell_1} G_{\ell_1-s}(y^1) \prod_{i=1}^{r-1} \Big[ \sum_{\ell_{i+1}=0}^{\ell_i}
H_{\ell_i,n}( \tfrac{ y^{i}-y^{i+1}}{2}) G_{\ell_{i+1}-s}(y^{i+1}) \Big]
\ud y^r \dots \ud y^1.
\end{multline}
For fixed $y^i$, with $1\le i\le r-1$ we have
\begin{align*}
&\int\limits_{y^{i+1} \in \bbR^d}
\sum_{\ell_{i+1}=0}^{\ell_i}
H_{\ell_i,n}( \tfrac{ y^{i}-y^{i+1}}{2}) G_{\ell_{i+1}-s}(y^{i+1}) \ud y^{i+1}
\\ &\lc 2^{-(\ell_i+n)d}\int\limits_{|y^{i+1}-y^i|\le 2^{\ell_i+n+1} }\sum_{\ell_{i+1}=0}^{\ell_i} 2^{(\ell_{i+1}-s)d} \sum_{Q\in \fQ_{\ell_{i+1}-s}} |F_Q(y^{i+1} ) \bbone_Q(y^{i+1})| \ud y^{i+1}
\\
&\lc 2^{-(\ell_i+n)d} \sum_{\ell_{i+1}=0}^{\ell_i}\sum_{ \substack{ Q\in \fQ_{\ell_{i+1}-s}\\ \dist(Q,y^i)\le 2^{\ell_i+n+1}}} |Q|\int |F_Q(y^{i+1}) |\ud y^{i+1} \lc 1
\end{align*}
where we used that the cubes in $\fQ $ are disjoint and the $F_Q$ have normalized $L^1$ norm. Thus integrating in \eqref{eq-no-cancellations2} first in $y^r$, then in $y^{r-1}$, and so on, we obtain
\begin{align*}
\Big\|\sum_{\ell>s} H_{\ell,n} *G_{\ell-s} \Big\|_r^r &\lc_r \int\limits_{y^1\in \bbR^d } \sum_{\ell_1}
G_{\ell_1-s}(y^1)
\ud y^1
\\
&\lc_r \sum_{\ell_1}
\sum_{Q\in \fQ_{\ell_1-s}} |Q| \|F_Q\|_1 \,
\lc \mu(\fQ)
\end{align*} and \eqref{eq:no-cancellationsG} is proved. This finishes the proof of \eqref{eq:p=1--r}. \qed

\color{black}
\section{Sparse domination, Part I} \label{sec:sparse-partI} Here we prove Theorem \ref{thm:blackbox-sparse}.
Without loss of generality, we will assume that $q < p'$; the less interesting sparse bound $(p,q_1)$ with $q_1 \geq p'$ would be implied by Hölder's inequality from any $(p,q)$ with $q < p'$. In what follows we assume that $\nc$ is a fixed positive integer as in \eqref{eq:ncircdef}; in particular, this implies $\nc\ge 5$. Implicit constants are allowed to depend on $\nc$.
Define the modified functionals (allowing averages over triple cubes for the functions $|f_2|^q$)
\begin{align} \label{eq:Lambdamod} \widetilde \La_{p,q}^\fS (f_1,f_2) &= \sum_{Q\in \fS}|Q| \jp{f_1}_{Q,p}
\jp{f_2}_{3Q,q},
\\ \label{eq:maxLambdamod}
\La^{**}_{p,q} (f_1,f_2) &=\sup_{\substack{\fS\subset\fD:\\ \fS :\ga\text{-sparse}}}\widetilde\La^\fS_{p,q} (f_1,f_2).
\end{align}

We use, for a cube $S\in \fD$, the notation $\La^{S,**}_{p,q} (f_1,f_2)$ if we require that all the sparse families featuring in the sup consist of cubes contained in $ S$. Recall the definition of $q_*(p,p_\circ, r_\circ) $ in \eqref{eq:r*def}. Fix $p$ and let
\Be\label{eq:Telldef} T_{\ell} = h_{\la(p) ,\ell}(\rho(D)) .\Ee

\begin{definition} Let $d\ge 2$,
$\frac{2(d+1)}{d+3}\le p_\circ<\frac{2d}{d+1}$, $p_\circ \leq r_\circ \leq \frac{d-1}{d+1}p_\circ'$ and assume that $\mathrm{Hyp}(p_\circ, r_\circ)$ holds. Let $1\leq p < p_\circ$ and $q_*(p,p_\circ,r_\circ) < q < p'$.
For $\fn=0,1,2,\dots$ let $\bbU(\fn)\equiv \bbU_{p,q} (\fn)$ be the smallest constant $U$ such that for all bounded measurable functions $f_1$, $f_2$ with compact support and for all $S \in \fD$ with $\nc\le L(S)\le \nc+ \fn$,
\[ \big|\biginn{ \sum_{\ell\le L(S)-\nc} T_{\ell} [f_1 \bbone_{S}] }{f_2\bbone_{3S}}\big| \le U \La^{S,**}_{p,q} (f_1,f_2). \]
\end{definition}

The convolution kernels of $T_\ell$ are Schwartz functions and therefore it is immediate that $\bbU_{p,q}(\fn)$ are finite for all $q\le p'$. Our main task will be to prove for that $\sup_{\fn} \bbU_{p,q} (\fn)<\infty$ for $p$ and $q$ as above.
This will be done by induction, by proving that there is a constant $C$ such that for $\fn\ge 1$
\Be \label{eq:U(n)-iteration} \bbU(\fn)\le \max\{ \bbU( \fn-1), C\}. \Ee

The main iteration step in the sparse domination argument has the same form as in \cite{KeslerLacey}.

\begin{prop} \label{prop:iteration}Let $\frac{2(d+1)}{d+3} \le p_\circ<\frac {2d}{d+1}$, $p_\circ \leq r_\circ \leq \frac{d-1}{d+1}p_\circ'$ and assume that $\hypo$ holds. Let $1\le p< p_\circ$ and $q_*(p,p_\circ,r_\circ)<q < p'$.
Then there is a constant $C>0$ such that for every $S\in\fD_{>0}$ and every bounded $f_1:S\to \bbC$, $f_2: 3S\to \bbC$, there is a collection $\fW$ of disjoint dyadic subcubes of $S$ with the properties
\begin{align}\label{eq:measure}&\Big|\bigcup_{Q\in \fW} Q\Big|\le (1-\ga)\, |S|,
\end{align}
\begin{equation}
\label{eq:recursion} \Big|\sum_{\ell=0}^{L(S)-\nc} \inn{T_\ell f_1}{f_2} \Big| \le C
|S|\jp{f_1}_{S,p} \jp{f_2}_{3S,q} + \sum_{\substack{Q\in \fW_{\ge n_\circ}}} \Big| \sum_{\ell=0} ^{L(Q)-\nc}\inn{T_\ell[f_1\bbone_Q] }{f_2\bbone_{3Q} } \Big|.
\end{equation}

\end{prop}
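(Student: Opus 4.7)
The plan is to perform a Calder\'on--Zygmund stopping-time decomposition of $f_1$ at height $\alpha := C_\gamma \langle f_1 \rangle_{S,p}$ with $C_\gamma := (1-\gamma)^{-1/p}$, and to dispatch every contribution not captured by the recursive term on the right-hand side of \eqref{eq:recursion} using Proposition~\ref{CZ-prop}. Let $\fW$ be the collection of maximal dyadic subcubes $Q \subsetneq S$ with $\langle |f_1|^p \rangle_Q > \alpha^p$; these cubes are disjoint and
$\big|\bigcup_{Q\in\fW}Q\big| \le \alpha^{-p}\int_S|f_1|^p = (1-\gamma)|S|$,
which is \eqref{eq:measure}. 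Write $f_1 = g + \sum_{Q\in\fW} b_Q$ with $g := f_1\bbone_{S\setminus\bigcup\fW}$ and $b_Q := f_1\bbone_Q$; by Lebesgue differentiation $|g| \le \alpha$ a.e., and by the stopping selection $\|b_Q\|_p^p \le 2^d \alpha^p|Q|$.

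Next split
$$\sum_{\ell=0}^{L(S)-\nc}\inn{T_\ell f_1}{f_2} = \mathrm{I}_g + \mathrm{I}_b^{\mathrm{big}} + \mathrm{I}_b^{\mathrm{rec}},$$
where $\mathrm{I}_g := \inn{\sum_\ell T_\ell g}{f_2}$, $\mathrm{I}_b^{\mathrm{rec}} := \sum_{Q\in\fW_{\ge\nc}}\sum_{\ell=0}^{L(Q)-\nc} \inn{T_\ell b_Q}{f_2}$ collects the pairs $(Q,\ell)$ where the kernel scale is much smaller than $Q$, and $\mathrm{I}_b^{\mathrm{big}}$ gathers the remaining bad-part pairs (those with $\ell > L(Q)-\nc$ or $Q\in\fW$ with $L(Q)<\nc$). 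By Lemma~\ref{lem:error-kernelest} and the choice of $\nc$ in \eqref{eq:ncircdef}, for $\ell \le L(Q)-\nc$ the kernel of $T_\ell$ decays rapidly outside a set of linear size $\ll 2^{L(Q)}$, so $T_\ell b_Q$ is essentially concentrated in $3Q$. Replacing $f_2$ by $f_2\bbone_{3Q}$ in $\mathrm{I}_b^{\mathrm{rec}}$ then produces exactly the recursive term of \eqref{eq:recursion}, plus an error
$\mathcal{E} := \sum_{Q\in\fW_{\ge\nc}}\sum_{\ell=0}^{L(Q)-\nc}\inn{T_\ell b_Q}{f_2\bbone_{\R^d\setminus 3Q}}$.
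This error is controlled by the pointwise bound $|T_\ell b_Q(x)| \lesssim_N \|b_Q\|_1(2^{L(Q)}+\dist(x,Q))^{-N}$ for $x\notin 3Q$ together with the disjointness of $\fW$ and a Hardy--Littlewood maximal estimate applied to $f_2$; this gives $|\mathcal{E}|\lesssim |S|\langle f_1\rangle_{S,p}\langle f_2\rangle_{3S,q}$.

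It remains to dominate $|\mathrm{I}_g| + |\mathrm{I}_b^{\mathrm{big}}|$ by $C|S|\langle f_1\rangle_{S,p}\langle f_2\rangle_{3S,q}$. For $\mathrm{I}_b^{\mathrm{big}}$, set $u_\ell := \bbone_{\{0\le\ell\le L(S)-\nc\}}$ and regroup by $s := \ell - L(Q) \ge -\nc+1$. When $s\ge 0$, Proposition~\ref{CZ-prop}~\eqref{eq:CZ-cons} applied with $r=q'$ (admissible since $q>q_*(p,p_\circ,r_\circ)$ is equivalent to $q'<r_*(p,p_\circ,r_\circ)$) yields
$\big\|\sum_\ell u_\ell T_\ell[\sum_{Q\in\fW_{\ell-s}}b_Q]\big\|_{q'} \lesssim 2^{-\eps s}\alpha|S|^{1/q'}$;
the finitely many $s$ with $-\nc<s<0$ are handled by \eqref{eq:CZ-cons-n} with $n=-s$, producing the same estimate up to a constant depending only on $\nc$. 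Summing over $s$ (geometric for $s\ge 0$, finite for $s<0$) and pairing with $\|f_2\bbone_{3S}\|_q \le |3S|^{1/q}\langle f_2\rangle_{3S,q}$ via H\"older delivers the required estimate for $\mathrm{I}_b^{\mathrm{big}}$. For $\mathrm{I}_g$, apply \eqref{eq:CZ-cons} with $\fQ$ equal to the dyadic partition of $S$ into unit cubes and $f_Q := g\bbone_Q$ (which satisfies $\int_Q|f_Q|^p\le \alpha^p|Q|$ by $|g|\le\alpha$); for each $s\ge 0$ only the index $\ell=s$ contributes on the left-hand side, producing $\|T_s g\|_{q'}\lesssim 2^{-\eps s}\alpha|S|^{1/q'}$. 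Summing in $s$ by the triangle inequality (geometric) yields $\big\|\sum_\ell T_\ell g\big\|_{q'}\lesssim \alpha|S|^{1/q'}$, which upon pairing with $f_2\bbone_{3S}$ completes the estimate.

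The main technical obstacle is the control of the localization error $\mathcal{E}$: one must verify that the rapid kernel decay from Lemma~\ref{lem:error-kernelest}, amplified by the margin $\ell\le L(Q)-\nc$ together with the spacing condition \eqref{eq:ncircdef} which forces the effective support of the kernel to be a factor at least $2^{d+2}$ smaller than $Q$, adds up over all $Q\in\fW$ and admissible $\ell$ to an acceptable remainder. The disjointness of $\fW$ and a routine maximal-function majorant on $f_2$ are the decisive tools here. Once $\mathcal{E}$ is absorbed into the main term, the remaining analysis reduces to two applications of Proposition~\ref{CZ-prop} with different choices of $\fQ$, closing the iteration.
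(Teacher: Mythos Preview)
Your argument is correct and follows essentially the same route as the paper: a Calder\'on--Zygmund decomposition of $f_1$, with the near-scale bad part handled by Proposition~\ref{CZ-prop} at $r=q'$ and the far-scale part producing the recursive term plus a kernel-localization error controlled via Lemma~\ref{lem:error-kernelest}. Two small points of divergence are worth noting. First, the paper treats the good part more directly by duality: since $q<p'$ one has $|1/q-1/2|<|1/p-1/2|$, so interpolating $\|T_\ell\|_{L^p\to L^p}=O(1)$ with $\|T_\ell\|_{L^2\to L^2}=O(2^{-\ell\la(p)})$ gives $\|T_\ell^*\|_{L^q\to L^q}\lesssim 2^{-\eps\ell}$, and then $|\mathrm{I}_g|\le \|g\|_\infty\int_S|(\cT^S)^*f_2|\lesssim \alpha|S|^{1/q'}\|f_2\|_{L^q(3S)}$ without invoking Proposition~\ref{CZ-prop} at all. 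Second, Proposition~\ref{CZ-prop} requires $\fQ\subset\fD_{\ge 0}$, so in your treatment of $\mathrm{I}_b^{\mathrm{big}}$ you need to first lump the stopping cubes with $L(Q)\le 0$ into their containing unit cubes (the paper does this via the term $B_0=\sum_{Q\in\fW_{\le 0}}b_Q$) before applying it; as written, cubes of negative sidelength fall outside the scope of the proposition.
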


\begin{proof}[Proof of Theorem \ref{thm:blackbox-sparse}, given Proposition \ref{prop:iteration}] In order to prove \eqref{eq:U(n)-iteration} we fix $\fn\ge 1$ and let $S\in\fD_{\nc+\fn}$. Let $\ep>0$. Let $\fW$ be the family of dyadic subcubes guaranteed by Proposition \ref{prop:iteration} such that \eqref{eq:recursion} holds. Note that $\nc\le L(Q)< n_\circ+ \fn$ for all $Q \in \fW_{\ge\nc}$. Therefore, by the induction hypothesis,
for each $Q\in \fW_{\ge\nc}$ there is a $\gamma$-sparse family $\fS_Q$ of dyadic subcubes of $Q$ such that
\[\Big| \sum_{\ell=0} ^{L(Q)-\nc}\inn{T_\ell[f_1\bbone_Q] }{f_2\bbone_{3Q} } \Big|\le \bbU(\fn-1) \widetilde \La_{p,q}^{\fS_Q} (f_1, f_2) +\ep.
\]
Setting $E_S:=S \backslash \bigcup_{Q \in \fW} Q$, the collection $\fS= \{S\} \cup \bigcup_{Q\in \fW} \fS_Q$ is a $\gamma$-sparse family of dyadic subcubes of $S$ and we have \[ |S|\jp{f_1}_{S,p} \jp{f_2}_{3S,q} +\sum_{Q\in \fW} \widetilde \La_{p,q}^{\fS_Q} (f_1,f_2) \le \widetilde \La_{p,q}^{\fS}(f_1,f_2).\] Since $\ep>0$ was arbitrary we deduce \eqref{eq:U(n)-iteration}.

Finally, if $f_1$, $f_2$ are compactly supported $L^\infty$-functions we choose $N$ so that $2^{\nc+10} \supp(f_1), 2^{\nc+10} \supp(f_2)\subset [-N,N]^d$. By the properties of the Lerner--Nazarov \cite{lerner-nazarov} dyadic lattice $\fD$ and by Lemma \ref{lem:error-kernelest}, there is a cube $S\in \fD$ which contains $[-N,N]^d$ such that $|\inn {T_\ell f_1}{f_2}|= |\inn {T_\ell[f_1\bbone_S]}{f_2\bbone_{3S}} |\lc \ep 2^{-\ell} $ for sufficiently large $\ell$.
Since $\ep>0$ is arbitrary, this together with the main estimate \eqref{eq:U(n)-iteration}, noting from \eqref{hla-decomposition} that $h_\la=\sum_{\ell=0} ^\infty h_{\la,\ell}$, yields the bound
\[|\inn{h_{\la(p)} (\rho(D)) f_1}{f_2}| \lc \La^{**}(f_1,f_2) .\]
A well-known argument relying on the three lattice theorem in \cite{lerner-nazarov} allows to replace $\La^{**}$ by the more standard maximal sparse form $\La^*$ (see e.g. \cite[Ch.4.2]{BRS} for details). Since $\cR_a^{\la(p)}-h_{\la(p)} (\rho(D)) $ satisfies a standard $\Sp(p,p)$ bound for all $p\ge 1$ (see the beginning of \S\ref{sec:decomp}) we obtain the desired $\Sp(p,q)$ bound for $\cR^{\la(p)}_a$.
\end{proof}
\begin{proof}[Proof of Proposition \ref{prop:iteration}] Let $\alpha=\jp{f_1}_{S,p}$ and
$\Omega=\{x: M_{HL}(|f_1|^p)\ge \frac{100 d}{1-\gamma} \alpha^p\}$,
where $M_{HL}$ denotes the Hardy--Littlewood maximal function. Let $\fW$ be the collection of Whitney cubes of $\Omega$ satisfying that $\Omega=\bigcup_{Q \in \fW} Q$ and
\Be\label{eq:Whitney}
\diam(Q)\le \dist(Q,\Omega^\complement)\le 4\diam (Q)
\Ee
for all $Q\in \fW$; see \cite[Ch. VI.1]{Ste70}.
Since $|\Omega| \leq (1-\gamma)|S|$, condition \eqref{eq:measure} follows.

Define next $g=f_1 \bbone_{\Omega^c}$ and $b_Q= f_1 \bbone_Q$ for each $Q \in \fW$. By the standard Calderón--Zygmund properties,
\[
\| g \|_\infty \lesssim \alpha \qquad \text{and} \qquad \int_Q |b_Q|^p \le \alpha^p |Q|.
\]
Let
\[
B_0=\sum_{\substack{Q\in \fW_{\le 0}}} b_Q, \qquad \quad B_j=\sum_{\substack{Q\in \fW_j}} b_Q, \quad \,j>0.\]
With these definitions we have
$\int_Q |B_j|^p\lc \alpha^p |Q|$ whenever $Q$ is a dyadic cube with $L(Q) \geq j \geq 0$; note that we also have $\int_Q|B_j|\le \alpha |Q|. $

Let $\cT^S= \sum_{\ell=0}^{L(S)-\nc} T_\ell $.
Then $|\inn{\cT^S f_1}{f_2}| \le I+II
+ III$, where
\begin{align*}I&=|\inn{\cT^S g}{f_2}|, \qquad
II=\Big|\Biginn{\sum_{\ell=0}^{L(S)-\nc} T_\ell[\sum_{ 0\le j<\ell+\nc} B_j]}{f_2} \Big|,
\end{align*}
\begin{align*}
III&=
\Big|\Biginn{\sum_{\ell=0}^{L(S)-\nc} T_\ell[\sum_{ j\ge \ell+\nc} B_j]}{f_2} \Big|.
\end{align*}

\subsubsection*{Estimation of $I$} $\hypo$ implies $\VV(p,p)$ and together with Lemma \ref{lem:conseq-of-hyp} this implies $\|T_\ell\|_{L^p\to L^p} =O(1)$. Since $\|h_{\la(p),\ell}\|_\infty=O(2^{-\la(p)})$ and $|\frac 1p-\frac 12|>|\frac 1q-\frac 12| $ we get $\|T_\ell\|_{L^q\to L^q}\lc 2^{-\eps\ell} $ for some $\eps>0$, by interpolation.
We can also apply this for the adjoint operators; indeed
$T_\ell^*(h_{\la_*}(\rho(D))^*= h_{\la,*}(\widetilde \rho(D))$ where $\widetilde \rho $ is the Minkowski functional of $-\Omega$. Hence $\|T_\ell^*\|_{L^q\to L^q}\lc 2^{-\eps\ell} $.
Therefore
\begin{align} \notag
I&= |\inn{g \bbone_S}{ (\cT^S)^*f_2} |\lc \alpha \int_S|(\cT^S)^* f_2| \lc
\alpha |S|^{1-1/q} \sum_{\ell=0}^{\infty}\|T_\ell^* f_2\|_{q} \\
\label{Isparsebd}&\lc \alpha |S|^{1-1/q}
\Big(\int_{3S}|f_2|^q\Big)^{1/q} \lc |S| \jp{f_1}_{S,p} \jp{f_2}_{3S,q}.
\end{align}

\subsubsection*{Estimation of $II$}

We estimate $II\le \sum_{s\ge -\nc} II_s$ where (with $s\wedge 0:=\max\{s,0\}$)
\[II_s= \Big|\Biginn{\sum_{\ell=s\wedge 0}^{L(S)-n_\circ } T_\ell B_{\ell-s} }{f_2} \Big|.\]
First assume $s>0$. We use Proposition \ref{CZ-prop} with $r=q'$ and $\fQ=\fW_{\ge 1}$. Then\begin{align*} &\Big\|\sum_{\ell=s+1} ^{L(S)-\nc} T_\ell B_{\ell-s} \Big\|_{q'}^{q'} \lc 2^{-s\ep q'} \alpha^{q'-p} \sum_{Q\in \fW} \|b_Q\|_p^p
\\
&\lc 2^{-s\ep q'} \alpha^{q'-p} \sum_{Q\in \fW} \alpha^p |Q|\lc 2^{-s\eps q'} \alpha^{q'} |S|.
\end{align*}
For the term with $\ell=s$ we use Proposition \ref{CZ-prop} with $r=q'$ and $\fQ=\fD_0$; note that for
$R\in \fD_0$ and $f_R:= \sum_{ Q\in \fW , Q\subset R}b_Q$ we have $ \int_R|f_R|^p \lc \alpha^p|R| $.
In \eqref{eq:CZ-cons} we set $u_\ell=1$ if $\ell=s$ and $u_\ell=0$ for $\ell\neq s$ and obtain
\begin{align*} \big\|T_s B_{0} \big\|_{q'}^{q'} &\lc 2^{-s\ep q'} \alpha^{q'-p} \sum_{R\in \fD_0} \Big\| \sum_{ \substack{ Q\in \fW \\ Q\subset R}}b_Q\Big\|_p^p
\lc 2^{-s\ep q'} \alpha^{q'-p} \sum_{Q\in \fW} \|b_Q\|_p^p
\\& \lc 2^{-s\ep q'} \alpha^{q'-p} \sum_{Q\in \fW} \alpha^p |Q|\lc 2^{-s\eps q'} \alpha^{q'} |S|.
\end{align*}
Finally, the terms with $-\nc\le s\le 0$ are treated by part (ii) of Proposition \ref{CZ-prop} with $n\le \nc$ (so that the polynomial growth of the constant in $n$ is irrelevant). We get
\[ \Big\|\sum_{\ell=0} ^{L(S)-\nc} T_\ell B_{\ell-s} \Big\|_{q'}^{q'} \lc_{\nc} \alpha^{q'} |S|.\]
Combining these estimates we get for all $s\ge -\nc$
\begin{align} \notag
II_s\le \Big\|\sum_{\ell=s\wedge 0}^{L(S)-\nc} T_\ell B_{\ell-s} \Big \|_{q'} \|f_2\|_{q}
&\lc \min\{2^{-s\eps},1\} \alpha |S|^{1-\frac 1q} \Big(\int_{3S}|f_2|^q \Big)^{1/q}
\\& \label{IIs-sparsebd}\lc 2^{-s\eps} |S| \jp{f_1}_{S,p} \jp{f_2}_{3S,q}
\end{align} by the definition $\alpha= \jp{f_1}_{S,p}$. After summing in $s \geq -n_\circ$ we obtain \Be\label{IIsparsebd} II\lc |S| \jp{f_1}_{S,p} \jp{f_2}_{3S,q}. \Ee

\subsubsection*{Estimation of $III$}
Write $III=|\sum_{j\ge \nc} \sum_{\ell=0}^{j-\nc} \biginn{ T_\ell[\sum_{Q\in \fW_j} b_{Q}]}{f_2}|$
and estimate $III\le III_{\mathrm{main}} + III_{\mathrm{err}}$
where
\Be\label{III-main-and err}\begin{aligned}
&III_{\mathrm{main}} = \sum_{\substack{Q\in \fW}} \Big|\biginn{ \sum_{\ell=0}^{L(Q)-n_\circ} T_\ell [f_1\bbone_Q] }{f_2\bbone_{3Q} }\Big|,
\\
& III_{\mathrm{err} }= \sum_{\substack{Q\in \fW} }\Big|\biginn{ \sum_{\ell=0}^{L(Q)-n_\circ} T_\ell [f_1\bbone_Q] }{f_2\bbone_{(3Q)^\complement} }\Big|.
\end{aligned}
\Ee
Note that $III_{\mathrm{main}}$ is the last term in \eqref{eq:recursion}.
By the estimations for $I$ and $II$
we are done if we prove the stronger estimate
\Be\label{IVsparsebd}
III_{\mathrm{err}}\lc |S|\jp {f_1}_{S,p} \jp{f_2}_{3S,1}
\Ee since by H\"older's inequality $\jp {f_2}_{3S,1} \lesssim \jp{f_2}_{3S,q}$.

To see \eqref{IVsparsebd} we use Lemma \ref{lem:error-kernelest} and estimate $III_{\mathrm{err}}$ by
\begin{align*}
&\sum_{\substack{Q\in\fW}} \int_Q|f_1(y) |
\int_{ (3Q)^\complement\cap 3S}
|x-y|^{-N}|f_2(x)|\ud x
\ud y
\\ &\lc \sum_{m=n_\circ}^\infty\sum_{ \substack{Q\in\fW_m} } \alpha |Q| \sum_{n=1}^\infty
2^{-(n+m) N}
\int_{ 2^{n+1} Q}
|f_2|
\\&\lc \alpha\sum_{m,n} 2^{-(m+n) (N-d) } \sum_{R\in \fD_{m+n} }
\int_{3R}|f_2| \lc \alpha \int_{3S} |f_2|
\end{align*}
which gives \eqref{IVsparsebd}.
\end{proof}

\section{Auxiliary estimates for the proof of Theorem \ref{thm:STendpt}}\label{sec:auxiliary-for Thm1.3}
We consider multiplier transformations acting on families of functions $F=\{f_Q\}$, with $f_Q:\bbR^d\to \bbC$ in $L^p$, indexed by cubes $Q\in \fD_{\ge 0}$. These functions are assumed to belong to weighted $\ell^r(L^p)$ spaces $\Xspace_{p,r}$ of vector-valued functions, with norm
\Be \label{Xp2}
\|F\|_{\Xspace_{p,r}}= \Big (\sum_{j=0}^\infty\sum_{Q\in \fD_j} [2^{-j d(\frac 1p-\frac 1r)} \|f_Q\|_p]^r\Big)^{1/r}.
\Ee
In the present paper we take $r=2$.
The following result is equivalent with $\VV(\frac{2(d+1)}{d+3},2)$. Fix $\la_*= \frac{d-1}{2(d+1)}$ as in Theorem \ref{thm:STendpt} and let

\Be \label{eq:new-building-blocks*} {T_{\la_*,\ell} f} = h_{\la_*,\ell} (\rho(D) ) f.\Ee
\begin{lemma} \label{lem:endpoint-ST-summation}
Let $p= \frac{2(d+1)}{d+3}$.
Then for $0\le \nu\le 4\nc$,
\Be\label{eq:improved-ST} \Big\| \sum_{\ell=0}^\infty u_\ell T_{\la_*,\ell} \big[\sum_{j=0}^{\ell+\nu} \sum_{Q\in \fD_j} f_Q] \Big\|_2 \lc \|u\|_\infty \|F\|_{\Xspace_{p,2}}
\Ee
for any bounded sequence $u=\{u_\ell\}_{\ell=0}^\infty$. Moreover,
\Be\label{eq:improved-ST-sq}
\Big(\sum_{\ell=0}^\infty
\Big\|T_{\la_*,\ell} \big[\sum_{j=0}^{\ell+\nu} \sum_{Q\in \fD_j} f_Q\big] \Big\|_2^2\Big)^{1/2}\lesssim \| F\|_{\Xspace_{p,2}}.
\Ee
\end{lemma}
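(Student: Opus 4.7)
The plan is to prove the square-function bound \eqref{eq:improved-ST-sq} first, and then deduce \eqref{eq:improved-ST} from it by exploiting the almost-orthogonality of the family $\{T_{\la_*,\ell}\}$ on $L^2$.

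For \eqref{eq:improved-ST-sq}, I would interchange the order of summation by setting $k=\ell-j$, turning the inner sum $\sum_{j=0}^{\ell+\nu}$ into a sum over $k\geq-\nu$ with cube scale $\tilde\ell:=j=\ell-k$. Applying Minkowski's inequality to exchange the $\sum_k$ and the $\ell^2_\ell$-norm reduces the task to showing, for each fixed $k\geq-\nu$, the bound
\[
\Big(\sum_{\tilde\ell\geq \max(0,-k)} \big\|T_{\la_*,\tilde\ell+k}\big[\sum_{Q\in\fD_{\tilde\ell}} f_Q\big]\big\|_2^2\Big)^{1/2} \lc c_k \|F\|_{\Xspace_{p,2}},
\]
with $\sum_{k\geq -\nu}c_k <\infty$. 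The scale mismatch between the multiplier $T_{\la_*,\tilde\ell+k}$ (essentially supported on the annulus of width $2^{-\tilde\ell-k}$) and the cubes in $\fD_{\tilde\ell}$ (at scale $2^{\tilde\ell}$) is handled by a subdivide-or-aggregate procedure that reduces the problem to a matched-scale application of \eqref{eq:thetaj-ST-est} in Lemma \ref{lem:ST-style}, which for $p=\frac{2(d+1)}{d+3}$ is exactly $\VV(p,2)$.

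Concretely, for $k\geq 0$ I split each $Q\in\fD_{\tilde\ell}$ into sub-cubes $R\in\fD_{\tilde\ell+k}$ and set $g_R:=f_{Q(R)}\bbone_R$, where $Q(R)\in\fD_{\tilde\ell}$ is the unique ancestor of $R$. Applying \eqref{eq:thetaj-ST-est} at scale $\tilde\ell+k$ to $\{g_R\}$ and using $\la_*+\tfrac{d+1}{2}=\tfrac{d}{p}$ yields
\[
\big\|T_{\la_*,\tilde\ell+k}\big[\sum_{Q\in\fD_{\tilde\ell}} f_Q\big]\big\|_2 \lc 2^{-(\tilde\ell+k)d/(d+1)}\Big(\sum_R \|g_R\|_p^2\Big)^{1/2}.
\]
Since $p<2$ and $\sum_{R\subset Q}\|g_R\|_p^p=\|f_Q\|_p^p$, the convex maximum of $\sum_R\|g_R\|_p^2$ is attained by concentration on a single sub-cube, giving $\sum_{R\subset Q}\|g_R\|_p^2\leq \|f_Q\|_p^2$; summing in $Q$ yields $c_k\lc 2^{-kd/(d+1)}$. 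For $-\nu\leq k<0$ I instead aggregate cubes into parents $\hat Q\in\fD_{\tilde\ell+k}$, set $\tilde g_{\hat Q}:=\sum_{Q\subset\hat Q}f_Q$, and apply \eqref{eq:thetaj-ST-est} at the matched scale $\tilde\ell+k$; the power-mean bound $\|\tilde g_{\hat Q}\|_p^2\lc 2^{2|k|d/(d+1)}\sum_{Q\subset\hat Q}\|f_Q\|_p^2$ compensates the factor $2^{-(\tilde\ell+k)d/(d+1)}$ exactly, giving $c_k=O(1)$ uniformly in $k\in[-\nu,0)$. Summing $c_k$ over $k\geq-\nu$ then proves \eqref{eq:improved-ST-sq}.

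To deduce \eqref{eq:improved-ST} from \eqref{eq:improved-ST-sq}, I use Lemma \ref{lem:ptwisebdhellla} to see that the multipliers $h_{\la_*,\ell}(\rho(\xi))$ are essentially supported on the annuli $\{|1-\rho(\xi)|\sim 2^{-\ell}\}$ with rapid decay in $2^\ell|1-\rho(\xi)|$. By Plancherel this yields
\[
|\langle T_{\la_*,\ell} g, T_{\la_*,\ell'} h\rangle| \lc 2^{-|\ell-\ell'|\tau}\|T_{\la_*,\ell} g\|_2 \|T_{\la_*,\ell'} h\|_2
\]
for some $\tau>0$; a Schur-test argument then gives $\|\sum_\ell u_\ell T_{\la_*,\ell}G_\ell\|_2^2\lc \|u\|_\infty^2 \sum_\ell \|T_{\la_*,\ell}G_\ell\|_2^2$ for any family $\{G_\ell\}$, and taking $G_\ell=\sum_{j\leq\ell+\nu}\sum_Q f_Q$ together with \eqref{eq:improved-ST-sq} produces \eqref{eq:improved-ST}. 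The principal obstacle is the scale-mismatch step in the proof of \eqref{eq:improved-ST-sq}, navigated by the subdivide/aggregate trick; the key observation powering the $k\geq 0$ case is that $p<2$ makes the passage from $\ell^p$ to $\ell^2$ on sub-cube norms loss-free.
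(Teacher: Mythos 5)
There is a genuine gap, and it sits exactly where the main difficulty of the lemma lies. In the paper's conventions $\fD_m$ consists of cubes of sidelength $2^{m}$, so for $k\ge 0$ the cubes of $\fD_{\tilde\ell+k}$ are \emph{larger} than those of $\fD_{\tilde\ell}$; your ``split each $Q\in\fD_{\tilde\ell}$ into sub-cubes $R\in\fD_{\tilde\ell+k}$'' has the inclusion backwards, and once the indices are straightened out your two cases trade places: the subdivision argument (with the loss-free $\ell^p\hookrightarrow\ell^2$ step) applies only to the finitely many scales $j\in(\ell,\ell+\nu]$, while the regime with unboundedly many values of the mismatch $k=\ell-j\ge 0$ is the one where the cubes are \emph{smaller} than the scale matched to $T_{\la_*,\ell}$. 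There the only move available with Lemma \ref{lem:ST-style} is to aggregate the $f_Q$, $Q\in\fD_j$, into functions on cubes $R\in\fD_\ell$ and apply \eqref{eq:thetaj-ST-est} at scale $\ell$; but then $\|\sum_{Q\subset R}f_Q\|_p^2\le 2^{kd(\frac2p-1)}\sum_{Q\subset R}\|f_Q\|_p^2$ is sharp when the mass is spread over the $2^{kd}$ subcubes, and since $d(\frac1p-\frac12)=\frac{d}{d+1}$ this H\"older loss $2^{k\frac{d}{d+1}}$ cancels the matched-scale gain exactly, giving $c_k=O(1)$ rather than $O(2^{-kd/(d+1)})$ — precisely the ``no gain'' computation you carried out in your (mislabelled) $k<0$ case. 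Hence $\sum_{k\ge0}c_k$ diverges and the Minkowski-in-$k$ scheme collapses. Decay in $k$ in this regime cannot come from size bounds alone: it requires the vanishing of $h_{\la_*,\ell}$ to order $N_\circ+1$ at $\rho=1$, i.e.\ \eqref{eq:hlaellcancat1-deriv} coming from the moment conditions of Corollary \ref{cor:cancellations}, which the paper exploits via the factorization $T_{\la_*,\ell}=2^{-j\la_*}\om_{\ell,j}(\rho(D))\vth_j(\rho(D))$ with $\vth_j(\varrho)=(1+2^{2j}(1-\varrho)^2)^{-d}$, the bound $\sup_\xi\sum_{j}|\om_{j+n,j}(\rho(\xi))|\lc 2^{-n\la_*}$, and an application of \eqref{eq:thetaj-ST-est} at the \emph{cube} scale $j$. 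Your argument never uses this cancellation, and without it the estimate you need for fixed large $k$ is not reachable by the tools you invoke.

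The second half of your plan also has a gap: the deduction of \eqref{eq:improved-ST} from \eqref{eq:improved-ST-sq} runs in the wrong direction. The claimed bound $|\langle T_{\la_*,\ell}g,T_{\la_*,\ell'}h\rangle|\lc 2^{-|\ell-\ell'|\tau}\|T_{\la_*,\ell}g\|_2\,\|T_{\la_*,\ell'}h\|_2$ is false: taking $\widehat g=\widehat h$ concentrated on a tiny set on which both (real-valued) multipliers are essentially constant and nonzero, Plancherel gives equality up to $1-o(1)$, so no gain relative to that right-hand side is possible — the $\ell'$-annulus lies inside the region where $h_{\la_*,\ell}\circ\rho$ is merely small but nonvanishing, and normalizing by $\|T_{\la_*,\ell}g\|_2\|T_{\la_*,\ell'}h\|_2$ erases that smallness. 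For the same reason the intermediate claim $\|\sum_\ell u_\ell T_{\la_*,\ell}G_\ell\|_2^2\lc\|u\|_\infty^2\sum_\ell\|T_{\la_*,\ell}G_\ell\|_2^2$ for arbitrary families $\{G_\ell\}$ fails (choose $\widehat{G_\ell}$ proportional to $(h_{\la_*,\ell}\circ\rho)^{-1}$ on a fixed thin annulus for $L$ consecutive values of $\ell$; the left side is about $L$ times the right). In fact \eqref{eq:improved-ST}, with arbitrary bounded $u$, is the stronger statement: the paper proves it directly by the factorization argument above together with a weighted Cauchy--Schwarz in the frequency variable, and then obtains \eqref{eq:improved-ST-sq} from it by Rademacher/Khintchine averaging — the opposite of the reduction you propose.
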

\begin{proof} The inequality \eqref{eq:improved-ST-sq} is a formal consequence of
\eqref{eq:improved-ST}; this can be seen by taking $u_\ell=r_\ell(t)$ where the $r_\ell$ are the Rademacher functions, and then averaging in $t$.
Following an idea in \cite{tao-STendpoint} we may split, for any choice of $j$,
\[ {T_{\la_*,\ell} } = 2^{-j\la_*} \om_{\ell,j} (\rho(D)) \vth_j(\rho(D)), \] where
\[\om_{\ell,j} (\varrho)= 2^{(j-\ell) \la_*} \frac{2^{\ell \la_*} h_{\la_*,\ell}(\varrho)} {\vth_j(\varrho)}, \quad \text{ with $\vth_j(\varrho)= (1+2^{2j} (1-\varrho)^2)^{-d}$.} \]
We change variables $\ell=j+n$.
Using the estimates in Lemma \ref{lem:ptwisebdhellla} with large $N_1$ we obtain for each $n\ge -\nu$ and for $\varrho=\rho(\xi)\in \supp(\chi)$,
\begin{align*} |\om_{j+n,j}(\varrho) | &\lc 2^{-n\la_* }
\frac{ \min\{ (2^{(j+n) } |1-\varrho|)^{N_\circ+1}, (1+2^{j+n}|1-\varrho|)^{-N_1} \} } { (1+2^{2j}|1-\varrho|^2)^{-d} }
\end{align*} and from this
\[ \sup_\xi \sum_{j\ge 0 \wedge -n} |\om_{j+n,j}(\rho(\xi))| \le C 2^{-n\la_*}\] with $C$ independent of $n$ (only dependent of $|\nu|\lc \nc$ which is fixed).
Hence with $g_j:=\sum_{Q\in \fD_j} f_Q$
\begin{align}
\notag\Big\|\sum_{\ell=0}^\infty u_\ell T_{\la_*,\ell} \sum_{j=0}^{\ell+\nc} g_j \Big\|_2
&= \sum_{n =-\nc}^\infty \Big\| \sum_{j\ge 0\wedge -n} 2^{-j\la_*} u_{j+n}\om_{j+n,j}(\rho(D)) \vth_j(\rho(D)) g_j \Big\|_2
\\
\label{eq:orth-in-j}
&\lc \sum_{n =-\nc}^{\infty } 2^{-n\la_*} \Big(\sum_{j\ge 0\wedge -n} |u_{j+n} |^2\big\|
2^{-j\la_*} \vth_j(\rho(D)) g_j\big\|_2^2\Big)^{1/2}.
\end{align}
For $j\ge 0$ we have by \eqref{eq:thetaj-ST-est}
\[\big\|
2^{-j\la_*} \vth_j(\rho(D)) g_j\|_2 \lc \Big( \sum_{Q\in \fD_j} 2^{-jd(\frac 2p-1)} \|f_Q\|_p^2 \Big)^{\frac 12} \]
and \eqref{eq:improved-ST} follows by combining the two previous displays.
\end{proof}

In order to prove Theorem \ref{thm:STendpt} for $(\frac 1p,\frac 1q) $ on the open edge connecting $(\frac{d+3}{2(d+1)},\frac 12) $ with
$(\frac 12, \frac{d+3}{2(d+1)}) $
we need a refined bilinear variant of Lemma \ref{lem:endpoint-ST-summation}.

Let $u\in\ell^\infty(\bbN_0)$, $F_i
=\{f_{i,Q}\}_{Q\in \fD_{\ge 0}}$, $i=1,2$, $\ell, j_1,j_2\ge 0$, $\la_*=\frac{d-1}{2(d+1)}$ and $\fQ, \fQ'\subset \fD_{\ge 0}$. Define
\Be\label{eq:Gammajsdef}
\Gamma^{\ell,j_1,j_2}_{\fQ,\fQ'}(F_1,F_2):= \Biginn{
T_{\la_*,\ell}\big[ \sum_{Q\in \fQ_{j_1} } f_{1,Q }\bbone_Q \big] }{\sum_{Q'\in\fQ_{j_2}' } f_{2,Q'} \bbone_{Q'} }.
\Ee
Next define a family of bilinear forms, depending on parameters $0\le \nu_1, \nu_2\le 4\nc$ by
\Be\label{varGammadefinition}
\varGamma_{\fQ,\fQ'} (F_1,F_2,u) := \sum_{\ell\ge 0} u_\ell \sum_{0\le j_1< \ell+\nu_1}\sum_{0\le j_2\le \ell+\nu_2} \Gamma^{\ell, j_1,j_2}_{\fQ,\fQ'}(F_1,F_2).\Ee

\begin{prop}\label{prop:bilinear-var}
Let
$\fQ\subset \fD_{\ge 0}$, $\fQ'\subset \fD_{\ge 0}$ each be disjoint families of cubes such that
\Be \label{eq:separation}
\dist(Q,Q') >\tfrac 12 \diam (Q')
\Ee for all $(Q,Q') \in \fQ\times \fQ'$ satisfying $L(Q')\ge L(Q)+4$.
Then for $(\frac1p,\frac1q)$ on the closed edge connecting $(\frac{d+3}{2(d+1)},\frac 12) $ with
$(\frac 12, \frac{d+3}{2(d+1)}) $,
\Be \label{eq:bilinear} \big| \varGamma_{\fQ,\fQ'} (F_1,F_2,u) \big| \lc\|u\|_\infty \|F_1\|_{\Xspace_{p,2} } \|F_2\|_{\Xspace_{q,2}}.
\Ee
\end{prop}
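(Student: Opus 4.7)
The plan is to establish the inequality at the two endpoints $(\tfrac{1}{p},\tfrac{1}{q})=(\tfrac{d+3}{2(d+1)},\tfrac{1}{2})$ and $(\tfrac{1}{p},\tfrac{1}{q})=(\tfrac{1}{2},\tfrac{d+3}{2(d+1)})$, and then obtain the full edge by bilinear complex interpolation. Since $h_{\la_*,\ell}$ is real-valued, $T_{\la_*,\ell}$ is self-adjoint, so the two endpoints are nominally symmetric under $F_1\leftrightarrow F_2$, $\fQ\leftrightarrow\fQ'$; however, the separation assumption~\eqref{eq:separation} is one-sided (applying only when $L(Q')\ge L(Q)+4$), so the two cases must be treated with slight asymmetry.

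For the endpoint $p=p_\circ:=\tfrac{2(d+1)}{d+3}$, $q=2$, I set $g_{1,j_1}:=\sum_{Q\in\fQ_{j_1}} f_{1,Q}\bbone_Q$ and $g_{2,j_2}:=\sum_{Q'\in\fQ'_{j_2}} f_{2,Q'}\bbone_{Q'}$, and let $G_1^\ell:=\sum_{0\le j_1<\ell+\nu_1} g_{1,j_1}$, $G_2^\ell:=\sum_{0\le j_2\le\ell+\nu_2} g_{2,j_2}$, $H_2:=\sum_{j_2\ge 0} g_{2,j_2}$. I then split
\[
\varGamma = \Biginn{\sum_\ell u_\ell T_{\la_*,\ell} G_1^\ell}{H_2} \,-\, \sum_\ell u_\ell \Biginn{T_{\la_*,\ell} G_1^\ell}{\sum_{j_2>\ell+\nu_2} g_{2,j_2}}.
\]
Cauchy--Schwarz bounds the first (main) term by $\|\sum_\ell u_\ell T_{\la_*,\ell} G_1^\ell\|_2\cdot\|H_2\|_2$; Lemma~\ref{lem:endpoint-ST-summation}~\eqref{eq:improved-ST} applied to $F_1$ controls the first factor by $\|u\|_\infty\|F_1\|_{\Xspace_{p_\circ,2}}$, while disjointness of $\fQ'$ gives $\|H_2\|_2^2=\sum_{Q'\in\fQ'}\|f_{2,Q'}\|_2^2=\|F_2\|_{\Xspace_{2,2}}^2$. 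The error term, after swapping $\ell$ and $j_2$ summations, contains only pairs with $j_2>\ell+\nu_2$ and $j_1<\ell+\nu_1$. Isolating the sub-contribution $j_1\le j_2-4$, the separation~\eqref{eq:separation} gives $\dist(Q,Q')>2^{j_2-1}$; combined with the spatial localization of $K_{\la_*,\ell}$ in $\{|x|\lesssim 2^\ell\}$ up to rapidly decaying tails (Lemmas~\ref{lem:error-kernelest},~\ref{lem:main-kernelest}), these pairings decay rapidly in $j_2-\ell$, and summing the geometric series closes the bound. The remaining ``close'' sub-contribution $|j_1-j_2|\le 3$ is of diagonal type and reduces to another application of Lemma~\ref{lem:endpoint-ST-summation} after regrouping $\sum_{|k|\le 3} g_{2,j_1+k}$ into a single family indexed by scales near $j_1$.

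For the dual endpoint $p=2$, $q=p_\circ$, I write $\varGamma=\Biginn{H_1}{\sum_\ell u_\ell T_{\la_*,\ell} G_2^\ell}$ minus an error with $j_1\ge\ell+\nu_1$, where $H_1:=\sum_{j_1\ge 0} g_{1,j_1}$. The main term is bounded using Lemma~\ref{lem:endpoint-ST-summation}~\eqref{eq:improved-ST} applied to $F_2$, together with $\|H_1\|_2=\|F_1\|_{\Xspace_{2,2}}$ from disjointness of $\fQ$. In the dual error, the separation~\eqref{eq:separation} does \emph{not} apply (here $Q$ is generically larger than $Q'$). Instead, since $T_{\la_*,\ell}G_2^\ell$ has Fourier support in the thin annulus $\{|1-\rho|\sim 2^{-\ell}\}$, I introduce a bump multiplier $\widetilde P_\ell$ adapted to this annulus, replace $g_{1,j_1}$ by $\widetilde P_\ell g_{1,j_1}$ in the pairing, and combine Cauchy--Schwarz in $\ell$ with Plancherel's almost-orthogonality $\sum_\ell\|\widetilde P_\ell g_{1,j_1}\|_2^2\lc\|g_{1,j_1}\|_2^2$ and the square-function bound~\eqref{eq:improved-ST-sq}.

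The main obstacle is handling the error terms: the $\ell$-dependent truncations $j_i\lessgtr\ell+\nu_i$ prevent a single application of Lemma~\ref{lem:endpoint-ST-summation}, and since the separation~\eqref{eq:separation} is only one-sided, the two endpoints require genuinely different arguments---spatial separation combined with kernel localization in one case, and Fourier near-orthogonality across scales in the other.
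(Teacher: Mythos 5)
Your treatment of the first endpoint $(p,q)=(\tfrac{2(d+1)}{d+3},2)$ is essentially the paper's argument: complete the $j_2$-sum, bound the main term by Cauchy--Schwarz, Lemma \ref{lem:endpoint-ST-summation} and disjointness of $\fQ'$, and control the correction using kernel localization plus a square-function argument. Two imprecisions, both repairable: your dichotomy $j_1\le j_2-4$ versus $|j_1-j_2|\le 3$ omits the region $j_1\ge j_2+4$ (possible when $\nu_1>\nu_2$), and in the range $\ell+\nu_2<j_2\lesssim \ell+\nc$ the separation only gives $\dist(Q,Q')\gtrsim 2^{j_2}$, which is \emph{not} large compared with the essential support radius $C_0 2^{\ell+2}$ of $K_{\la_*,\ell}$, so there is no rapid decay in $j_2-\ell$ there; those boundedly many intermediate scales must be handled by Cauchy--Schwarz in $\ell$ together with \eqref{eq:improved-ST-sq} and the bounded multiplicity of each $j_2$ in the $\ell$-window --- this is exactly the paper's ``mid'' term $M_\ell$, with the threshold $4\nc$ chosen so that $2^{\nc}>2^{d+4}C_0$.

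The genuine divergence, and the genuine gap, is the dual endpoint. The paper disposes of it in one line by transposition, $\varGamma_{\fQ,\fQ'}(F_1,F_2,u)=\widetilde\varGamma_{\fQ',\fQ}(F_2,F_1,u)$ with $\widetilde\varGamma$ the same form for $-\Omega$, rerunning the first-endpoint argument with the roles of the two families exchanged (so the separation is in effect used with the families reversed; in the applications of \S\ref{sec:sparse-partII} both directions hold, one of them vacuously). Your scruple about the one-sidedness of \eqref{eq:separation} is a fair reading of the statement, but the substitute you propose does not close. First, $T_{\la_*,\ell}G_2^\ell$ is not literally Fourier-supported in $\{|1-\rho|\sim 2^{-\ell}\}$; making the projection step rigorous requires Lemma \ref{lem:ptwisebdhellla} --- in particular the interior decay \eqref{eq:hlaellcancat1-deriv}, which is a nontrivial feature of the construction of $h_{\la,\ell}$ --- together with an annular decomposition as in \eqref{eq:thin-annuli}. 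More seriously, the mechanism you invoke (Cauchy--Schwarz in $\ell$ plus $\sum_\ell\|\widetilde P_\ell g_{1,j_1}\|_2^2\lc\|g_{1,j_1}\|_2^2$ for \emph{fixed} $j_1$) does not control the error term: applied per scale it yields $\sum_{j_1}\|g_{1,j_1}\|_2$, an $\ell^1$ sum over scales instead of the required $\|F_1\|_{\Xspace_{2,2}}=(\sum_{j_1}\|g_{1,j_1}\|_2^2)^{1/2}$, while the crude bound $\|\widetilde P_\ell\sum_{j_1\ge\ell+\nu_1}g_{1,j_1}\|_2^2\le\sum_{j_1\ge\ell+\nu_1}\|g_{1,j_1}\|_2^2$ loses a factor of the number of scales after summing in $\ell$; and there is no decay of $\|\widetilde P_\ell g_{1,j_1}\|_2$ in $j_1-\ell$ to absorb either loss (a function on a huge cube can have all its Fourier mass in one fixed annulus). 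To close your route one would additionally need almost-orthogonality in $j_1$ for fixed $\ell$, e.g.\ exploiting that the kernels of $\widetilde P_\ell$ and $T_{\la_*,\ell}$ are spatially localized at scale $2^\ell\ll 2^{j_1}$, so that the pieces $\widetilde P_\ell g_{1,j_1}$ remain essentially supported in the pairwise disjoint cubes of $\fQ$; as written this step is missing, whereas the paper's transposition avoids the problem altogether.
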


We need an auxiliary lemma that states that under the separation condition \eqref{eq:separation} (which is common in Whitney type decompositions) the terms $\Gamma_{\ell,j_1,j_2} $ are negligible when $j_1\le \ell\ll j_2$. Here we have essentially no restrictions on $p,q,r$.

\begin{lemma} \label{lem:bilinear-error}
Let
$\fQ\subset \fD_{\ge 0}$, $\fQ'\subset \fD_{\ge 0}$ for which the separation condition \eqref{eq:separation} is satisfied.
Let $1\le r<\infty$, $1\le p,q\le\infty$.
Then
for any $N \geq 0$, $0\le j_1\le \ell+2\nc$, $j_2> \ell+3\nc$,
\Be\big |\Gamma_{\fQ,\fQ'}^{\ell, j_1,j_2}(F_1,F_2)\big|\lc 2^{-j_2 N} \|F_1\|_{\Xspace_{p,r}} \|F_2\|_{\Xspace_{q,r'}} \,.\Ee
\end{lemma}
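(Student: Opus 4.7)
The plan is to exploit the rapid off-$2^\ell$-shell decay of $K_{\la_*,\ell}=\cF^{-1}[h_{\la_*,\ell}\circ\rho]$ together with the geometric separation \eqref{eq:separation}. First I would observe that for $Q\in\fQ_{j_1}$ with $0\le j_1\le \ell+2\nc$ and $Q'\in\fQ'_{j_2}$ with $j_2>\ell+3\nc$, the separation condition gives $\dist(Q,Q')>\tfrac12\diam(Q')\gtrsim 2^{j_2}$, which by the choice of $\nc$ in \eqref{eq:ncircdef} is much larger than $2^{\ell+2}C_0$. Since $j_1<j_2$, we also have $\diam(Q)\le\diam(Q')<2\dist(Q,Q')$, so $|x-y|\approx \dist(Q,Q')$ for $(y,x)\in Q\times Q'$. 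Consequently Lemma~\ref{lem:error-kernelest} is applicable and yields, for any $M\ge 0$,
\[ \sup_{y\in Q,\,x\in Q'}|K_{\la_*,\ell}(x-y)|\lc_M (1+\dist(Q,Q'))^{-M}. \]

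Next I would apply Hölder on each cube to get $\int_Q|f_{1,Q}|\le 2^{j_1 d/p'}\|f_{1,Q}\|_p$ and $\int_{Q'}|f_{2,Q'}|\le 2^{j_2 d/q'}\|f_{2,Q'}\|_q$, which reduces the task to bounding
\[ \Sigma:=\sum_{Q\in\fQ_{j_1}}\sum_{Q'\in\fQ'_{j_2}} (1+\dist(Q,Q'))^{-M}\,a_Q\,b_{Q'},\qquad a_Q=\|f_{1,Q}\|_p,\ b_{Q'}=\|f_{2,Q'}\|_q. \]
Since cubes within each dyadic level are disjoint, counting in dyadic annuli (a ball of radius $2^{k+j_2}$ contains $\lc 2^{kd}$ cubes of sidelength $2^{j_2}$ and $\lc 2^{(k+j_2-j_1)d}$ cubes of sidelength $2^{j_1}$) gives, for $M>d$,
\[ \sup_{Q'}\sum_{Q\in\fQ_{j_1}}(1+\dist(Q,Q'))^{-M}\lc 2^{(j_2-j_1)d-Mj_2},\qquad \sup_Q\sum_{Q'\in\fQ'_{j_2}}(1+\dist(Q,Q'))^{-M}\lc 2^{-Mj_2}. \]
Schur's test then furnishes the $\ell^r\times\ell^{r'}$ bound $\Sigma\lc 2^{(j_2-j_1)d/r'-Mj_2}\|a\|_{\ell^r}\|b\|_{\ell^{r'}}$.

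Finally, the definition \eqref{Xp2} of the $\Xspace$-norms yields $\|a\|_{\ell^r}\le 2^{j_1 d(1/p-1/r)}\|F_1\|_{\Xspace_{p,r}}$ and $\|b\|_{\ell^{r'}}\le 2^{j_2 d(1/q-1/r')}\|F_2\|_{\Xspace_{q,r'}}$. Combining these with the Hölder and Schur factors and collecting exponents, the total $j_1$-exponent is $d/p'+d(1/p-1/r)-d/r'=0$ and the total $j_2$-exponent is $d/q'+d/r'-M+d(1/q-1/r')=d-M$. Choosing $M=N+d$ (which is allowed since Lemma~\ref{lem:error-kernelest} gives arbitrary polynomial decay) produces the claimed factor $2^{-j_2 N}$. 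I do not anticipate a serious obstacle; the only care needed is in verifying that $|x-y|$ lies in the off-shell kernel-decay regime (guaranteed by \eqref{eq:ncircdef}) and in the bookkeeping of exponents through Schur's test.
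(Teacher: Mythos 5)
Your argument is correct and follows essentially the same route as the paper's proof: far-field decay of $K_{\la_*,\ell}$ from Lemma \ref{lem:error-kernelest} (legitimately invoked since $j_2-j_1\ge \nc+1\ge 4$ puts the separation hypothesis \eqref{eq:separation} in force and places $|x-y|\approx\dist(Q,Q')\gtrsim 2^{j_2}$ well beyond $2^{\ell+2}C_0$), Hölder on each cube, and counting cubes at a given distance, with your Schur test being a repackaging of the paper's decomposition into dyadic annuli $\dist(Q,Q')\approx 2^{j_2+m}$ followed by Hölder in $Q$ and $Q'$; the exponent bookkeeping ($j_1$-exponent $0$, $j_2$-exponent $d-M$) matches. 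The only point to adjust is to take the kernel-decay exponent strictly larger than $N+d$ (e.g. $M=N+d+1$), so that the geometric sums over annuli converge also when $N=0$, exactly as the paper does by requiring $N_1>N+d$.
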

\begin{proof}
Note that if $j_1\le \ell+2\nc$, $j_2>\ell+3\nc$, $Q\in \fQ_{j_1}$, $Q'\in \fQ_{j_2}'$ then $j_2\ge 4+j_1$ and thus \eqref{eq:separation}
holds by assumption. This separation condition implies
$\dist( 2^{\ell+\nc} Q, Q') \ge \sqrt {d} (2^{j_2-1}- 2^{\ell+\nc+1})\ge 2^{j_2-2} \ge 2^{\ell+2\nc}$ so that the first
estimate in Lemma \ref{lem:error-kernelest} applies.
That is, if
$K_{\la^*,\ell}=\cF^{-1}[h_{\la_*,\ell} \circ\rho]$ and $x\in Q'$, $y\in Q$ then $|K_{\la^*,\ell}(x-y)| \lc_{N_1} |x-y|^{-N_1}$, and $|x-y| \approx \dist(Q,Q')$. This yields for fixed $j_1,j_2$ the bound
\begin{multline} \label{eq:firstGammabd} |\Gamma_{\fQ,\fQ'}^{\ell, j_1,j_2} (F_1,F_2)| \lc \sum_{m=0}^\infty 2^{-(j_2+m) N_1} \sum_{Q'\in \fQ_{j_2}'} \cI^{m}_{j_1,j_2,Q'},\\\text{with } \cI^{m}_{j_1,j_2,Q'} = \int_{Q'} |f_{2,Q'}(x)|
\sum_{ \substack{Q\in \fQ_{j_1}: 2^{j_2+m-2} \le\\ \dist (Q,Q') < 2^{j_2+m+2}}}
\int_Q |f_{1,Q}(y)|\ud y \ud x.
\end{multline}
Now,
\begin{align*}
&\sum_{Q' \in \fQ_{j_2}'}\cI^{m}_{j_1,j_2,Q'}\lc 2^{ j_2\frac{d}{q'} } \sum_{Q'\in \fQ_{j_2}'} \|f_{2,Q'} \|_q
\sum_{\substack{Q\in \fQ_{j_1}: \dist (Q,Q') \approx2^{j_2+m}}}
2^{j_1\frac d{p'}} \|f_{1,Q}\|_{p}
\\
&\lc 2^{j_2\frac{d}{q'} } \sum_{Q'\in\fQ_{j_2}'} \|f_{2,Q'} \|_q
\Big(\sum_{ Q\in \fQ_{j_1}: \dist (Q,Q') \approx 2^{j_2+m}}
2^{j_1\frac{rd}{p'}} \|f_{1,Q}\|_{p}^{r}\Big)^{1/r} 2^{ (j_2+m-j_1) \frac d{r'}}
\\
&
\lesssim 2^{m\frac d{r'}} 2^{j_2d( 2-\frac 1r-\frac 1q) +j_1d(\frac 1r-\frac 1p)} \Big(\sum_{Q'\in \fQ_{j_2} '} \|f_{2,Q'}\|_q^{r'}\Big)^{\frac 1{r'}} \Big(
\sum_{\substack {Q\in \fQ_{j_1} , Q'\in \fQ_{j_2}' \\ \dist (Q,Q') \approx 2^{j_2+m}} }
\|f_{1,Q}\|_{p}^r\Big)^{\frac 1r}
\\&\lc 2^{(j_2+m)d }
\Big(\sum_{Q\in \fQ_{j_1}} \big[2^{-j_1 d(\frac 1{p} -\frac 1r)} \|f_{1,Q}\|_{p} \big]^r \Big)^{\frac 1r}
\Big(\sum_{Q\in \fQ_{j_2}'} \big[2^{-j_2 d(\frac 1{q} -\frac 1{r'})} \|f_{2,Q}\|_{q} \big]^{r'} \Big)^{\frac 1{r'}}
\end{align*}
where in the last inequality we have used that for any $Q \in \fQ_{j_1}$ we have \[ \#\{Q' \in \fQ_{j_2}': \dist(Q,Q') \approx 2^{j_2+m}\} \approx 2^{md}.\]
The claimed bound now follows immediately from
\eqref{eq:firstGammabd} with $N_1>N+d$.
\end{proof}

\begin{proof}[Proof of Proposition \ref{prop:bilinear-var}]
First consider the case $p=p_1=\frac{2(d+1) }{d+3}$;
$q=q_1=2$.
We let $G^\ell=\sum_{0\le j_2\le \ell+\nu_2} \sum_{Q'\in\fQ_{j_2}'} f_{2,Q'} \bbone_{Q'}$, and observe that
\Be\label{eq:Gell-expr}\varGamma_{\fQ,\fQ'} (F_1,F_2,u) =
\sum_{\ell\ge 0} u_\ell \Biginn{T_{\la_*,\ell} \big[ \sum_{\substack{0\le j_1< \ell+\nu_1,\\Q\in \fQ_{j_1}}} f_{1,Q }\bbone_Q\big] }{G^\ell} .
\Ee
Split
\Be \label{eq:GME} G^\ell=G-M_\ell- E_\ell, \quad \text{ with} \quad
G=\sum_{Q'\in\fQ_{\ge 0}' } f_{2,Q'} \bbone_{Q'},
\Ee\[
M_\ell=\sum_{\substack{j_2=\ell+\nu_2+1}}^{\ell+4\nc} \sum_{Q'\in\fQ_{j_2}' } f_{2,Q'} \bbone_{Q'} , \quad
E_\ell=\sum_{Q'\in\fQ_{>\ell+4\nc}' } f_{2,Q'} \bbone_{Q'}.
\] We then have
\[
\varGamma_{\fQ,\fQ'} =
\varGamma^{\mathrm{full}}_{\fQ, \fQ'} -
\varGamma^{\mathrm{mid}}_{\fQ, \fQ'} -
\varGamma^{\mathrm{err}}_{\fQ, \fQ'}
\]
where
$\varGamma^{\mathrm{full}}_{\fQ, \fQ'} (F_1,F_2,u)$, $
\varGamma^{\mathrm{mid}}_{\fQ, \fQ'} (F_1,F_2,u) $ and $
\varGamma^{\mathrm{err}}_{\fQ, \fQ'} (F_1,F_2,u) $
are defined as in \eqref{eq:Gell-expr} but with $G^\ell$ replaced by $G$, $M_\ell$ and $E_\ell$, respectively.
By Lemma \ref{lem:endpoint-ST-summation} and the Cauchy-Schwarz inequality we see that
\begin{align}\notag \varGamma^{\mathrm{full}}_{\fQ, \fQ'} (F_1,F_2,u)
&=\Big| \sum_{\ell=0}^\infty u_\ell \Biginn{
T_{\la_*,\ell}
\big[\sum_{\substack{0\le j_1< \ell+\nu_1,\\Q\in \fQ_{j_1}}} f_{1,Q }\bbone_Q \big] }{ G} \Big|
\\& \notag
\lc \|u\|_\infty
\|F_1\|_{\Xspace_{p_1,2}}
\|G\|_2\lc \|u\|_\infty
\|F_1\|_{\Xspace_{p_1,2}} \|F_2\|_{\Xspace_{2,2}}
\end{align}
where the last inequality uses the disjointness of the cubes in $\fQ'$.
Next we
apply the Cauchy-Schwarz inequality with respect to $x$ and $\ell$ and get
\begin{align} \notag \big|&\varGamma^{\mathrm{mid}}_{\fQ, \fQ'} (F_1,F_2,u)\big|=
\Big| \sum_{\ell=0}^\infty u_\ell \Biginn{T_{\la_*,\ell}
\big[\sum_{\substack{0\le j_1< \ell+\nu_1,\\Q\in \fQ_{j_1}}} f_{1,Q }\bbone_Q \big] }{ M_\ell} \Big|
\\ \notag &
\lc\Big(\sum_{\ell=0}^\infty|u_\ell|^2 \Big\|T_{\la_*,\ell}
\big[\sum_{\substack{0\le j_1< \ell+\nu_1,\\Q\in \fQ_{j_1}}} f_{1,Q }\bbone_Q] \Big\|_2^2\Big)^{1/2} \Big(\sum_{\ell=0}^\infty \|M_\ell\|_2^2\Big)^{1/2}
\\ \notag
& \lc \|u\|_\infty \|F_1\|_{\Xspace_{p_1,2}} \|F_2\|_{\Xspace_{2,2}}
\end{align}
where in the last inequality we have used Lemma \ref{lem:endpoint-ST-summation}
and the square-function estimate $(\sum_{\ell=0}^\infty \|M_\ell\|_2^2)^{1/2} \lc \|F_2\|_{\Xspace_{2,2}}$.
For the terms involving the $E_\ell$ we use Lemma \ref{lem:bilinear-error} and obtain
\begin{align} \notag
&\big|\varGamma^{\mathrm{err}}_{\fQ, \fQ'} (F_1,F_2,u)\big|=\Big| \sum_{\ell=0}^\infty u_\ell \Biginn{T_{\la_*,\ell}
\big[\sum_{\substack{0<j_1< \ell+\nu_1,\\Q\in \fQ_{j_1}}} f_{1,Q }\bbone_Q \big] }{ E_\ell} \Big|
\\ \notag
&\lc\sum_{\ell=0}^{\infty} \sum_{0 < j_1< \ell+\nu_1}\sum_{j_2\ge \ell+4\nc} 2^{-j_2N} \|u\|_\infty\|F_1\|_{\Xspace_{p_1,2}}\|F_2\|_{\Xspace_{2,2}} \lc \|u\|_\infty\|F_1\|_{\Xspace_{p_1,2}}\|F_2\|_{\Xspace_{2,2}}.
\end{align}

Combining the estimates for $\varGamma^{\mathrm{full}}_{\fQ, \fQ'} $, $
\varGamma^{\mathrm{mid}}_{\fQ, \fQ'} $ and $
\varGamma^{\mathrm{err}}_{\fQ, \fQ'} $
we get
\Be\label{eq:oneendpt} \big| \varGamma_{\fQ,\fQ'} (F_1,F_2,u) \big|\lc \|u\|_\infty \|F_1\|_{\Xspace_{p_1,2} }\|F_2\|_{\Xspace_{2,2}}.\Ee
Next observe that
\[\varGamma_{\fQ,\fQ'}(F_1,F_2,u) = \widetilde \varGamma_{\fQ', \fQ} (F_2,F_1,u), \] where $\widetilde \varGamma$ is the bilinear form associated with the domain $-\Om$.
Hence we get
\Be \label{eq:otherendpt} \big|\varGamma_{\fQ,\fQ'} (F_1,F_2,u)\big|\lc \|u\|_\infty \|F_1\|_{\Xspace_{2,2} }\|F_2\|_{\Xspace_{p_1,2}}.\Ee
It is straightforward to show the interpolation formula $[\Xspace_{q_1,2},\Xspace_{q_2,2}]_\theta =\Xspace_{q,2} $ for the Calder\'on complex interpolation spaces with $(1-\theta)/q_1+\theta/q_2=1/q$, $1\le q_1,q_2<\infty$.
Thus the assertion \eqref{eq:bilinear} follows by complex interpolation of
\eqref{eq:oneendpt} and \eqref{eq:otherendpt}.
\end{proof}

\section{Sparse domination, Part II} \label{sec:sparse-partII}
We now prove Theorem \ref{thm:STendpt}. We have $\la_*=\frac{d-1}{2(d+1)}$ and it only remains to prove the $\Sp(p,q)$ bound for $(\frac 1p,\frac 1q)$ on the closed line segment connecting $(\frac{d+3}{2(d+1)}, \frac 12)$ and
$(\frac 12, \frac{d+3}{2(d+1)})$; note that the points on this segment satisfy $\frac 1p+\frac 1q=\frac{d+2}{d+1}$.
As the point $(\frac{d+2}{2(d+1)},\frac{d+2}{2(d+1)}) $ is the center of this line segment we may, by symmetry of sparse bounds, assume that $\frac 1p\ge \frac {d+2}{2(d+1)} $. As before $T_{\la_*,\ell}=h_{\la_*,\ell} (\rho(D) ) $ as in \eqref{eq:new-building-blocks*}.

Setting up an induction argument as in \S\ref{sec:sparse-partI} one reduces the proof of the sparse bound to the following proposition which contains the main iteration step.
\begin{prop}
Let $\frac{2(d+1)}{d+3} \le p\le \frac{2(d+1)}{d+2}$, $\frac 1q=\frac{d+2}{d+1}-\frac 1p$.
Then there is a constant $C>0$ such that for every
$S\in\fD_{>0}$ and bounded $f_1:S\to \bbC$, $f_2: 3S\to \bbC$, there is a collection $\fW$ of disjoint dyadic subcubes of $S$ with the properties
\begin{align}
\label{eq:measure-2-new}&\Big|\bigcup_{Q\in \fW} Q\Big|\le (1-\ga)\, |S|,
\end{align}
\begin{multline}\label{eq:recursion2} \Big|\sum_{\ell=0}^{L(S)-\nc} \inn{ T_{\la_*,\ell} f_1}{f_2} \Big| \le C
|S|\jp{f_1}_{S,p} \jp{f_2}_{3S,q} +\sum_{Q\in \fW_{\geq n_\circ}} \Big| \sum_{\ell=0} ^{L(Q)-\nc}\inn{T_{\la_*,\ell}[f_1\bbone_Q] }{f_2\bbone_{3Q} } \Big|
.\end{multline}

\end{prop}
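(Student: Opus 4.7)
The plan is to adapt the proof of Proposition \ref{prop:iteration} from \S\ref{sec:sparse-partI}: first perform the standard Calder\'on--Zygmund decomposition of $f_1$ on $S$ at height $\alpha=\langle f_1\rangle_{S,p}$, obtaining a Whitney family $\fW$ of disjoint dyadic subcubes of $S$ and a splitting $f_1=g+\sum_{Q\in\fW}b_Q$ with $\|g\|_\infty\lc\alpha$ and $\int_Q|b_Q|^p\le\alpha^p|Q|$; the standard inequality $|\bigcup\fW|\le(1-\gamma)|S|$ yields \eqref{eq:measure-2-new}. Set $\cT^S=\sum_{\ell=0}^{L(S)-n_\circ}T_{\la_*,\ell}$ and $B_j=\sum_{Q\in\fW_j}b_Q$, and decompose
\[ |\langle\cT^S f_1,f_2\rangle|\le I+II+III, \]
with $I=|\langle\cT^S g,f_2\rangle|$, $II=\big|\sum_\ell \langle T_{\la_*,\ell}\sum_{j<\ell+n_\circ}B_j,f_2\rangle\big|$, and $III=\big|\sum_\ell \langle T_{\la_*,\ell}\sum_{j\ge\ell+n_\circ}B_j,f_2\rangle\big|$.

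The terms $I$ and $III$ are treated essentially as in Proposition \ref{prop:iteration}. For $I$ one uses $\|g\|_\infty\lc\alpha$ together with the uniform $L^2$-bound on $\cT^S$ (which follows from $\|h_{\la_*,\ell}\|_\infty\lc 2^{-\ell\la_*}$ by Plancherel, giving $\sum_\ell\|T_{\la_*,\ell}\|_{L^2\to L^2}\lc 1$), and H\"older's inequality on $3S$ combined with a short interpolation argument based on $\VV(p,2)$ when $q<2$, yielding $I\lc|S|\langle f_1\rangle_{S,p}\langle f_2\rangle_{3S,q}$. For $III$ we split $III=III_{\mathrm{main}}+III_{\mathrm{err}}$ as in \eqref{III-main-and err}: $III_{\mathrm{main}}$ is exactly the recursive tail on the right-hand side of \eqref{eq:recursion2}, while $III_{\mathrm{err}}\lc|S|\langle f_1\rangle_{S,p}\langle f_2\rangle_{3S,1}\le|S|\langle f_1\rangle_{S,p}\langle f_2\rangle_{3S,q}$ via the tail kernel estimate of Lemma \ref{lem:error-kernelest}.

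The central new step is the estimate of $II$. Set $\beta=\langle f_2\rangle_{3S,q}$ and perform a dual Calder\'on--Zygmund decomposition of $f_2$ on $3S$ at height $\beta$, yielding a disjoint family $\fW'$ of dyadic subcubes of $3S$ and $f_2=g'+\sum_{Q'\in\fW'}b'_{Q'}$ with $\|g'\|_\infty\lc\beta$ and $\int_{Q'}|b'_{Q'}|^q\le\beta^q|Q'|$. Expanding yields good--good, good--bad, bad--good, and bad--bad pieces. The three pieces involving $g$ or $g'$ are controlled via Cauchy--Schwarz and the square-function consequence of Lemma \ref{lem:endpoint-ST-summation}, each bounded by $\lc\alpha\beta|S|=|S|\langle f_1\rangle_{S,p}\langle f_2\rangle_{3S,q}$. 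The bad--bad piece, restricted to $j_2\le\ell+n_\circ$, matches precisely the bilinear form $\varGamma_{\fW,\fW'}$ of \eqref{varGammadefinition} with $f_{1,Q}=b_Q$, $f_{2,Q'}=b'_{Q'}$, and $u_\ell=1$; Proposition \ref{prop:bilinear-var} then produces the desired bound since the Calder\'on--Zygmund size estimates deliver $\|F_1\|_{\Xspace_{p,2}}^2\lc\alpha^2|S|$ and $\|F_2\|_{\Xspace_{q,2}}^2\lc\beta^2|S|$. The complementary range $j_2>\ell+n_\circ$ is disposed of by an analogous ``dual $III$'' split, where the $b'_{Q'}$ for large $Q'$ are treated by the kernel tail from Lemma \ref{lem:error-kernelest} or absorbed by re-initiating the argument on $Q'$.

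The principal obstacle is verifying the separation condition \eqref{eq:separation} required by Proposition \ref{prop:bilinear-var} for the two Whitney collections $\fW$ and $\fW'$, since they are built from unrelated level sets and a small cube in $\fW$ may sit close to a large cube in $\fW'$. To bypass this I would split the bad--bad interaction according to relative scale and position: pairs with $|L(Q)-L(Q')|<4$ form a diagonal piece treated directly by the square-function estimate of Lemma \ref{lem:endpoint-ST-summation} and the $L^2$-bound on $\cT^S$; pairs with $L(Q')\ge L(Q)+4$ split into ``far'' pairs $Q\not\subset 3Q'$, for which \eqref{eq:separation} is automatic and Proposition \ref{prop:bilinear-var} applies, and ``near'' pairs $Q\subset 3Q'$, which can be re-routed into $III_{\mathrm{main}}$ (the $f_2$-localization to $3Q'$ being part of the full $f_2\bbone_{3Q}$ on a suitable ancestor $Q\in\fW_{\ge n_\circ}$) or handled by a direct kernel estimate from Lemma \ref{lem:error-kernelest}. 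A symmetric treatment of $L(Q)\ge L(Q')+4$ then lets Proposition \ref{prop:bilinear-var} control each well-separated sub-family, and summing gives the bound $\lc\alpha\beta|S|$ for $II$, which together with the estimates for $I$ and $III$ completes the proof of \eqref{eq:recursion2}.
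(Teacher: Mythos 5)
There is a genuine gap, and it is exactly at the point you flag as "the principal obstacle." The paper does not use two unrelated Calder\'on--Zygmund decompositions at all: it forms the single open set $\Omega=\Omega_1\cup\Omega_2$ with $\Omega_1=\{M_{HL}(|f_1|^p)\gtrsim\alpha_1^p\}$ and $\Omega_2=\{x\in 3S: M_{HL}(|f_2|^q)\gtrsim\alpha_2^q\}$, takes one Whitney family $\fW$ for $\Omega$, and decomposes \emph{both} $f_1$ and $f_2$ along the same cubes ($g_i=f_i\bbone_{\Omega^\complement}$, $b_{i,Q}=f_i\bbone_Q$). Because all cubes come from one Whitney decomposition, the separation condition \eqref{eq:separation} for the pair $(\fW,\fW)$ (and for $(\fD_0\text{-parents},\fW_{>0})$) is automatic, the CZ size bounds $\int_Q|b_{1,Q}|^p\lc\alpha_1^p|Q|$ and $\int_Q|b_{2,Q}|^q\lc\alpha_2^q|Q|$ both hold, and Proposition \ref{prop:bilinear-var} and Lemma \ref{lem:bilinear-error} apply directly. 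This single-decomposition device is the key idea of the proof, and it is absent from your proposal.

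Your substitute patches for the unseparated pairs do not work as stated. For a pair $Q\in\fW$, $Q'\in\fW'$ with $L(Q')\ge L(Q)+4$, $Q\subset 3Q'$, and $j_2\le \ell+O(\nc)$, the interaction in $II$ occurs at $\ell\gtrsim L(Q')$, i.e.\ at the scale of the kernel of $T_{\la_*,\ell}$, so Lemma \ref{lem:error-kernelest} gives no decay; and these terms cannot be ``re-routed into $III_{\mathrm{main}}$'': the recursion term in \eqref{eq:recursion2} only contains $\ell\le L(R)-\nc$ with both functions localized to an $R\in\fW$ (or $3R$), and no cube of $\fW$ other than $Q$ contains $Q$ (Whitney cubes are disjoint and their ancestors are not in $\fW$), so there is no admissible $R$ absorbing a term with $\ell\approx L(Q')\gg L(Q)$. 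Similarly, ``re-initiating the argument on $Q'$'' is not available, since the recursion is only over subcubes of $S$ in the family $\fW$ you output, not over cubes generated by $f_2$ on $3S$. Finally, your diagonal piece $|L(Q)-L(Q')|<4$ cannot be closed by Cauchy--Schwarz plus the square function of Lemma \ref{lem:endpoint-ST-summation}, because in the stated range $q\le 2$ the bad pieces $b'_{Q'}$ are only controlled in $L^q$, not $L^2$; this is precisely why the paper runs the bilinear estimate of Proposition \ref{prop:bilinear-var} in the $\Xspace_{q,2}$ norm instead. (A smaller point: for $d\ge 4$, $Q\cap 3Q'=\emptyset$ only gives $\dist(Q,Q')\ge \diam(Q')/\sqrt d$, so \eqref{eq:separation} is not literally ``automatic'' for your far pairs, though this is a fixable constant issue.) The remaining parts of your outline ($I$, $III$, the use of Proposition \ref{prop:bilinear-var} for genuinely separated bad--bad pairs) are consistent with the paper, but without the joint decomposition the estimate of $II$ does not close.
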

\begin{proof} Let $\alpha_1=\jp{f_1}_{S,p}$, $\alpha_2=\jp{f_2}_{3S,q} $ and let $\Omega=\Omega_1\cup\Omega_2$ where
\[ \Omega_1=\{x: M_{HL}(|f_1|^p)\ge \tfrac{100 d}{1-\gamma} \alpha_1^p\}, \quad \Omega_2=
\{x\in 3S: M_{HL}(|f_2|^q)\ge \tfrac{100 d}{1-\gamma} \alpha_2^q\} .\]
Let $\fW$ consist of the subcubes of $S$ which are Whitney cubes of $\Omega$. Since $|\Omega| \leq (1-\gamma)|S|$, \eqref{eq:measure-2-new} immediately follows. Observe that the pair of collections $(\fW,\fW)$ satisfies the separation condition \eqref{eq:separation}. Indeed, let $Q, Q'\in \fW$ such that $L(Q')\ge L(Q)+4$, i.e. $\diam(Q')\ge 16\diam (Q)$. There is $x\in \Omega^\complement$ such that $\dist(x,Q)\le 4\diam(Q) \le \frac 14 \diam(Q')$ and therefore
\[
\dist(Q,Q')\ge \dist(Q',x)-4\diam(Q) \ge \diam(Q')-4\diam(Q) \geq \tfrac{3}{4}\diam(Q'),
\]
from which \eqref{eq:separation} holds.
Below we will also use that if $\fQ_0$ is the collection of $Q\in \fD_0$ such that $Q$ contains a cube in $\fW$ then the pair $(\fQ,\fQ'):=(\fQ_0, \fW_{>0})$ also satisfies \eqref{eq:separation}. This is shown by a similar argument. Namely if $Q'\in \fW_{>0}$, with $L(Q')\ge 4$ and $Q\in \fQ_0$, $\widetilde Q\in \fW$ with $\widetilde Q\subseteq Q$ then by the above argument
$\dist (\widetilde Q, Q') \ge \frac 34\diam(Q')$ and thus
$\dist (Q,Q') \ge \frac 34 \diam(Q')-\diam (Q)\ge (\frac 34-\frac 1{16} ) \diam (Q').$

Define $g_i=f_i\bbone_{\Omega^\complement}$ and $b_{i,Q}=f_i\bbone_Q$, for $i=1,2$.
Then \[ \| g_i \|_\infty \lesssim \alpha_i, \qquad \int_Q |b_{1,Q} |^p \le \alpha_1^p |Q|, \qquad
\int_Q |b_{2,Q} |^q \le \alpha_2^q |Q|.\]
For $j>0$, $i=1,2$, let
\[
B_{i,0}= \sum_{\substack{Q \in \fW_{\le 0}}} b_{i,Q}, \qquad B_{i,j}= \sum_{\substack{Q \in \fW_j}} b_{i,Q}.
\]

Setting again $\cT^S= \sum_{\ell=0}^{L(S)-\nc} T_{\la_*,\ell} $ we have
\[ |\inn{\cT^S f_1}{f_2}| \le I+II+III\]
where \[ I=\big|\inn{\cT^S g_1}{f_2}\big|, \quad II= \Big|\Biginn{\sum_{\ell=0}^{L(S)-\nc} T_{\la_*,\ell}[\sum_{ j=0}^{\ell+\nc} B_{1,j}]}{f_2} \Big|\]
and
\[III= \Big|\Biginn{\sum_{\ell=0}^{L(S)-\nc} T_{\la_*,\ell}[\sum_{ j>\ell+\nc} B_{1,j}]}{ f_2} \Big|.\]

Below it will be advantageous to also use the definitions, for $Q\in \fD_{\ge 0}$ and $i=1,2$,
\Be\label{eq:defofBoneupperQ} B_i^Q= \begin{cases}
b_{i,Q} &\text{ if $Q\in \fW_{>0}$,}
\\
0 &\text{ if $Q\notin \fW$, $L(Q)>0$,}
\\ \sum_{\substack { Q'\in \fW\\ Q'\subset Q} }b_{i,Q'} &\text{ if $L(Q)=0$} \,.
\end{cases} \Ee
With these definitions we have $\int|B_i^Q(x)|^p \ud x\lc \alpha_i^p |Q|$, for $i=1,2$ and any $Q\in\fD_{\ge 0}$.

\subsection{\texorpdfstring{Estimation of the terms $I$ and $III$}{Estimation of the terms I and III}} We get $|I|\lc |S|\alpha_1\alpha_2$ by exactly the same argument as used in \eqref{Isparsebd} (replacing $\alpha$ there by $\alpha_1$).

Regarding $III$,
the estimation is identical to the estimation of $III$ in the proof of Proposition \ref{prop:iteration}.
We bound $III\le III_{\mathrm{main}} + III_{\mathrm{err}}$
with the definition of these terms in \eqref{III-main-and err}; the
main term matches the second term on the right-hand side of \eqref{eq:recursion2} and the error term is as before estimated by $|S|\jp{f_1}_{S,p}\jp{f_2}_{3S,q}$ using Lemma \ref{lem:error-kernelest}.

\subsection{\texorpdfstring{Estimation of $II$, in the case $p=\frac{2(d+1)}{d+3}$}{Estimation of II at the Stein-Tomas endpoint}}
This is very similar to the bound for the term $II$ in the proof of Proposition \ref{prop:iteration}, except that now we use the improved bound of
Lemma \ref{lem:endpoint-ST-summation} for $q=2$.
By Lemma \ref{lem:endpoint-ST-summation},
\begin{align*}
&\Big\|\sum_{\ell=0}^{L(S)-\nc} T_{\la_*,\ell} \sum_{j=1}^{\ell+\nc} B_{1,j}\Big\|_2 \lc
\Big(\sum_{j\ge 1} \sum_{Q\in \fW_j} 2^{-2j d(\frac 1p-\frac 12) } \|B_{1,j}\|_p^2 \Big)^{\frac 12}
\\
&\lc \alpha_1^{1-\frac p2} \Big(\sum_{Q\in \fW_{>0}} \|B_1^Q\|_p^p \Big)^{\frac 12}
\lc \alpha_1^{1-\frac p2} \Big(\alpha_1^p \sum_{Q\in \fW_{>0}} |Q|\Big)^{\frac 12}
\lc \alpha_1 |S|^{1/2} \lc |S|^{1/2} \jp{f_1}_{S,p}.
\end{align*}
Moreover,
\begin{align*}
&\Big\|\sum_{\ell=0}^{L(S)-\nc} T_{\la_*,\ell} B_{1,0} \Big\|_2 \lc
\Big(\sum_{Q\in \fD_0} \|B_1^Q\|_p^2 \Big)^{\frac 12}
\lc \alpha_1^{1-\frac p2} \Big(\sum_{Q\in \fD_0} \|B_1^Q\|_p^p \Big)^{\frac 12}
\\& \lc \alpha_1^{1-\frac p2} \Big(\alpha_1^p \sum_{Q'\in \fD_0} \sum_{\substack {Q\in \fW\\Q\subset Q'} }|Q|\Big)^{\frac 12}
\lc \alpha_1 |S|^{1/2} \lc |S|^{1/2} \jp{f_1}_{S,p}.
\end{align*}
Combining these two estimates and applying the Cauchy-Schwarz inequality we obtain
\[ II \le \Big\|\sum_{\ell=0}^{L(S)-\nc} T_{\la_*,\ell} \sum_{j=0}^{\ell+\nc} B_{1,j} \Big\|_2 \Big(\int_{3S} |f_2|^2\Big)^{1/2} \lc |S| \jp{f_1}_{S,p} \jp{f_2}_{3S,2} .\]

\subsection{\texorpdfstring{Estimation of $II$, in the case $\frac{2(d+1)}{d+3} < p\le \frac{2(d+1)}{d+2} $}{Estimation of II for intermediate exponents}}
We now split $II$ further as $II\leq II_1+II_2+II_3$ where
\begin{align*}
II_1=\Big|\Biginn{\sum_{\ell=0}^{L(S)-\nc} T_{\la_*,\ell}[\sum_{ j=0}^{\ell+\nc} & B_{1,j}]}{g_2}\Big|,
\quad
II_2= \Big|\sum_{\ell=0}^{L(S)-\nc} \Biginn{ T_{\la_*,\ell}[\sum_{ j=0}^{\ell+\nc} B_{1,j}]}{ \sum_{ j'=0}^{ \ell+4\nc} B_{2,j'}}\Big|,
\end{align*}
\begin{align*}
II_3=\Big| \sum_{\ell=0}^{L(S)-\nc} \Biginn{T_{\la_*,\ell}[\sum_{ j=0}^{\ell+\nc} B_{1,j}]}{ \sum_{ j'> \ell+4\nc} B_{2,j'}}\Big|.
\end{align*}
\subsubsection{Estimation of $II_1$}
Since now in the given range we have $\|T_{\la_*,\ell}\|_{L^p\to L^p} \lc 2^{-\ell\eps(p)}$ we obtain by H\"older's inequality
\begin{align*} II_1 &\le \sum_{\ell=0} ^{L(S) -\nc} 2^{-\ell\eps(p)} \Big\|\sum_{j=0}^{\ell+\nc} B_{1,j} \Big\|_p \|g_2\|_{p'}
\lc \Big(\int_S|f_1|^p\Big)^{1/p} \Big( \int_{3S} |g_2|^{p'}\Big)^{1/p'}
\\&\lc |S|^{1/p} \jp{f_1}_{S,p} |S|^{1/p'} \alpha_2 \lc |S|\jp{f_1}_{S,p} \jp{f_2}_{3S,q}.
\end{align*}

\subsubsection{Estimation of $II_2$} Now $\frac 1q=\frac{d+2}{d+1}-\frac 1p$.
We split $II_2\leq \sum_{i=1}^4 II_{2,i} $ where
\begin{align*} II_{2,1} &= \Big| \sum_{\ell=0}^{L(S)-\nc} \Biginn{ T_{\la_*,\ell}[\sum_{ j=1}^{\ell+\nc} B_{1,j}]}{ \sum_{ j'=1}^{ \ell+4\nc} B_{2,j'}}\Big|,
\,\, II_{2,2} = \Big|\sum_{\ell=0}^{L(S)-\nc} \Biginn{ T_{\la_*,\ell}[B_{1,0}]}{ \sum_{ j'=1}^{ \ell+4\nc} B_{2,j'}}\Big|,
\\ II_{2,3} &=\Big| \sum_{\ell=0}^{L(S)-\nc} \Biginn{ T_{\la_*,\ell}[\sum_{ j=1}^{\ell+\nc} B_{1,j}]}{ B_{2,0}}\Big|,
\quad II_{2,4} = \Big|\sum_{\ell=0}^{L(S)-\nc} \biginn{ T_{\la_*,\ell}[ B_{1,0}]}{ B_{2,0}}\Big|.
\end{align*}

We first consider $II_{2,1}$ and
apply Proposition \ref{prop:bilinear-var},
with $\nu_1=\nc$ and $\nu_2=4\nc$,
letting $\fQ=\fQ'$ be the family of all cubes in $\fW_{>0}$ so that the separation condition
\eqref{eq:separation} is satisfied.
We then obtain
\[
II_{2,1} \lc \Big (\sum_{Q\in \fW} 2^{-2L(Q) d(\frac 1p-\frac 12)} \|b_{1,Q}\|_p^2\Big)^{1/2} \Big (\sum_{Q'\in \fW} 2^{-2L(Q') d(\frac 1q-\frac 12)} \|b_{2,Q'}\|_q^2\Big)^{\frac 12} \] and write the right-hand side as $II_{2,1}(p)II_{2,1}(q)$. We have
\begin{align*}
&II_{2,1} (p)\lc \Big (\sum_{Q\in \fW} 2^{-2L(Q) d(\frac 1p-\frac 12)} \|b_{1,Q}\|_p^2\Big)^{1/2} \\&
\lc \Big (\sum_{Q\in \fW} 2^{-2L(Q) d(\frac 1p-\frac 12)} (\alpha_1^p|Q|)^{2/p} \Big)^{1/2} \lc \Big (\sum_{Q\in \fW} |Q| \Big)^{1/2} \alpha_1\lc |S|^{1/2}\alpha_1.
\end{align*} In exactly the same way we obtain $II_{2,1} (q)\lc |S|^{1/2}\alpha_2$
and hence $II_{2,1} \lc|S|\alpha_1\alpha_2.$

The expressions $II_{2,2}$, $II_{2,3}$ and $II_{2,4}$ are bounded similarly.
For $II_{2,2}$ we let $\fQ_0$ be the family of dyadic unit cubes $Q$ with the property that $Q$ contains a cube in $\fW$, and $\fQ'=\fW_{>0}$.
As observed in our discussion at the beginning of the proof we have the separation condition \eqref{eq:separation} in this case.
Applying Proposition
\ref{prop:bilinear-var} to $\varGamma_{\fQ_0,\fW_{>0}}$
we get
\[
II_{2,2} \lc \Big (\sum_{Q\in\fD_0} \Big\|\sum_{\substack{W\in \fW\\W\subset Q} }b_{1,W} \Big\|_p^2\Big)^{\frac 12} \Big (\sum_{Q'\in \fW} 2^{-2L(Q') d(\frac 1q-\frac 12)} \|b_{2,Q'}\|_q^2\Big)^{\frac 12} \]
which we write as $\widetilde{II}_{2,2}(p)II_{2,2}(q)$. Note that $II_{2,2}(q) = II_{2,1}(q) \lc |S|^{1/2} \alpha_2$. Moreover,
\begin{align*}
&\widetilde{II}_{2,2}(p) \lc \Big (\sum_{Q\in\fD_0}
\Big( \sum_{\substack{W\in \fW\\ W\subset Q}} \|b_{1,W}\|_p^p \Big)^{\frac 2p}
\Big)^{\frac 12}
\\&\lc \alpha_1 \Big( \sum_{Q\in\fD_0}\Big( \sum_{\substack{W\in \fW\\ W\subset Q}} |W|\Big)^{\frac 2p} \Big)^{\frac 12}
\lc \alpha_1 \Big( \sum_{Q\in\fD_0} \sum_{\substack{W\in \fW\\ W\subset Q}} |W| \Big)^{\frac 12} \lc \alpha_1|S|^{1/2}
\end{align*}
and hence $II_{2,2}\lc \widetilde{II}_{2,2}(p)II_{2,2}(q)\lc \alpha_1\alpha_2|S|.$
For $II_{2,3}$ we apply Proposition
\ref{prop:bilinear-var} with $\fQ=\fW_{>0}$ and with $\fQ'$ being the family of those $Q\in\fD_0$ which contain at least one cube in $\fW$.
Likewise for $II_{2,4}$ we use Proposition \ref{prop:bilinear-var} with the families $\fQ, \fQ'$ both consisting of those $Q\in \fD_0$ which contain at least one cube in $\fW$.

\subsubsection{Estimation of $II_3$}
Here we use Lemma \ref{lem:bilinear-error} and the assumptions on $p,q$ are irrelevant.
We can write $II_{3}\leq II_{3,1}+II_{3,2}$ with
\begin{align*}
II_{3,1}&= \Big| \sum_{\ell=0}^{L(S)-\nc} \sum_{j_1=1}^{\ell+\nc} \sum_{j_2>\ell+4\nc} \Gamma_{\fQ,\fQ'}^{\ell,j_1,j_2} (B_1,B_2) \Big|
\\
II_{3,2}&= \Big| \sum_{\ell=0}^{L(S)-\nc} \sum_{j_2>\ell+4\nc} \Gamma_{\fQ,\fQ'}^{\ell,0,j_2} (B_1,B_2) \Big|
\end{align*}
where
$\Gamma_{\fQ,\fQ'}^{\ell,j_1,j_2}$ is
as in \eqref{eq:Gammajsdef}.
Then $|\Gamma_{\fQ,\fQ'}^{\ell,j_1,j_2} (B_1,B_2)| \lc 2^{-j_2N} \|B_1\|_{\Xspace_{p,2}}\|B_2\|_{\Xspace_{q, 2}}$ and since we trivially have $\sum_{\ell\ge 0} \sum_{j_1=0}^{\ell+\nc} \sum_{j_2\ge \ell+4\nc} 2^{-j_2N} =O(1)$ we see that
\[
II_{3,1} \lc \Big (\sum_{\substack{Q\in \fW_{>0}} } 2^{-2L(Q) d(\frac 1p-\frac 12)} \|b_{1,Q}\|_p^2\Big)^{1/2} \Big (\sum_{\substack{Q'\in \fW_{>0}}} 2^{-2L(Q') d(\frac 1q-\frac 12)} \|b_{2,Q'}\|_q^2\Big)^{\frac 12}. \]
Arguing as for the term $II_2$, this immediately leads to $|II_{3,1}|\lc |S|\alpha_1\alpha_2$.
Similarly
\begin{align*}
|II_{3,2} |\lc \Big (\sum_{\substack{Q\in \fD_0} } \Big\|\sum_{\substack {W\in \fW\\W\subset Q}} b_{1,W} \Big\|_p^2\Big)^{1/2} \Big (\sum_{\substack{Q'\in \fW_{>0}}} 2^{-2L(Q') d(\frac 1q-\frac 12)} \|b_{2,Q'}\|_q^2\Big)^{\frac 12}
\end{align*}
and arguing as in the estimation of $II_{2,2}$ we obtain $II_{3,2} \lc |S|\alpha_1\alpha_2.$
\end{proof}

\subsection*{An open problem} It remains open whether for any $\la\in (0,\frac{d-1}{2})\setminus \{\frac{d-1}{2(d+1) }\}$ the sharp $\Sp(p_\la,q_\la) $ bound with $p_\la=\frac{2d}{d+1+2\la}$ and $\frac 1{q_\la}=\frac{d+1}{ (d-1)p_\la} -\frac{2}{d-1} $ holds
(and then also the sparse bounds at the top of the trapezoid $\trapez_d(\la)$). If in the analysis for the terms $II$ above we replace the Cauchy-Schwarz inequality by H\"older's inequality we see that we would need a sharp version of Lemma \ref{lem:endpoint-ST-summation} with
$\Xspace_{p_\la,q_\la'}^\fQ \to L^{q_\la'}$-boundedness for a disjoint family $\fQ$ of dyadic cubes, where $\Xspace_{p,r}^\fQ$ denotes the closed subspace of $\Xspace_{p,r}$ consisting of all $F=\{f_Q\}\in \Xspace_{p,r}$ such that $f_Q=0$ for $Q\notin \fQ$. The latter is analogous to verifying an endpoint version of $\VV(p,r)$ where one allows $\frac 1r=\frac{d+1}{d-1}(1-\frac 1p)$ and assumes that the $f_{j,R}$ are zero if $R\notin \fQ$. We do not know whether such endpoint inequalities hold for $r\neq 2$.

\section{\texorpdfstring{Consequences for weak type inequalities with $A_1$ weights}{Consequences for weak type inequalities with A1 weights}} \label{sec:weighted} We record some consequences of our sparse domination results on new weak type weighted inequalities for $\cR^\la_a$ when $\la<\frac{d-1}{2}$. Frey and Nieraeth \cite{FreyNieraeth} (extending earlier results in \cite{bernicot-frey-petermichl}) formulated general theorems about weak type weighted inequalities for operators in $\Sp(p,q)$, satisfying certain $A_1$ and reverse H\"older conditions.
Recall the definitions of the $A_1$, $\RH_\sigma$ characteristics for a nonnegative measurable function, i.e. a weight $w$:
\Be\begin{aligned} \label{eq:A1RHdef}
[w]_{A_1} &=\sup_B \Big(\intslash w(x)\ud x\Big) \Big(\mathrm{ess\, inf}\ci{x\in B} w(x)\Big)^{-1}
\\ \ [w]_{\RH_\sigma} &= \sup_B (\intslash_B w(x)^\sigma \ud x\Big)^{\frac 1\sigma} \Big(\intslash_B w(x)\ud x\Big)^{-1}
\end{aligned}\Ee
if $\sigma\in (1,\infty)$. The relevant class here is $A_1\cap \RH_\sigma$, for which both characteristics are finite; we recall that $w$ belongs to this class if and only if $w^\sigma\in A_1$ \cite{JohnsonNeugebauer91}.
By \cite[Theorem 1.4]{FreyNieraeth} operators in $\Sp(p,q)$ map $L^p(w)$ to $ L^{p,\infty}(w) $ provided that $w\in A_1\cap \RH_\sigma$ with $\sigma=(q'/p)'=\frac{q}{q+p-pq}$. More specifically,
\Be\label{eq:charact} \|T\|_{L^p(w)\to L^{p,\infty} (w)} \lc \|T\|_{\Sp(p,q)}
[w ^{(q'/p)'}]_{A_\infty}^{1+\frac 1p} [w]_{A_1}^{\frac 1p}
[w]_{\RH_{(q'/p)'}}^{\frac 1p},
\Ee
where $[v]_{A_\infty} := \sup_B (v(B))^{-1}\int_B M[v\bbone_B] (x)\ud x$ is Wilson's $A_\infty$-constant \cite{wilson87}, in which the supremum is taken over all balls.
For convergence results it is important to note that
the $A_1$, $A_\infty$ and $\RH_\sigma$ characteristics satisfy translation and dilation invariance properties, in the sense that the characteristics for $w(\cdot-h)$ and $t^dw(t\cdot)$ are the same as the corresponding characteristics for $w$.
In two dimensions Kesler and Lacey use \eqref{eq:charact} to obtain the weighted weak type $(p_\la,p_\la)$ inequalities for $\cR^\la_a$ when $w\in A_1\cap \RH_\sigma$ and $\sigma >\frac{4}{4-3p_\la}=\frac{3+2\la}{2\la} $. Using Theorems \ref{thm:new-sparse-bound-2D} and \ref{thm:STendpt} we can lower the reverse H\"older exponent by a factor of $4$.
\begin{cor}\label{cor:weighted2D}
Let $d=2$, $a>0$, $0<\la<1/2$, $p_\la= \frac{4}{3+2\la} $, $\sigma_\circ(\la) = \frac{3+2\la}{8\la} $. Assume that $\sigma>\sigma_\circ(\la)$ if $\la \in (0,\frac 12)\setminus \{\frac 16\}$ and $\sigma\ge \sigma_\circ(\la)=\frac 52$ if
$\la=\frac 16$.
Then for all $w\in A_1\cap \RH_\sigma$,
\[\cR^\la_{a,t}: L^{p_\la}(w,\bbR^2) \to L^{p_\la,\infty}(w,\bbR^2)\]
with operator norms uniform in $t$.
\end{cor}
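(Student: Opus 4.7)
The plan is to combine our sparse bounds (Theorems \ref{thm:new-sparse-bound-2D} and \ref{thm:STendpt}) with the Frey--Nieraeth weighted estimate \eqref{eq:charact}, and then exploit the dilation invariance of the weight characteristics to obtain uniformity in $t$.

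First I would verify the numerology. A direct computation with $q_{\mathit{opt}} := \frac{4}{1+6\lambda}$ and $p_\lambda = \frac{4}{3+2\lambda}$ gives
\[ q_{\mathit{opt}}' = \frac{4}{3-6\lambda}, \qquad \frac{q_{\mathit{opt}}'}{p_\lambda} = \frac{3+2\lambda}{3-6\lambda}, \qquad \Bigl(\frac{q_{\mathit{opt}}'}{p_\lambda}\Bigr)' = \frac{3+2\lambda}{8\lambda} = \sigma_\circ(\lambda). \]
Since $q \mapsto (q'/p_\lambda)'$ is monotone increasing, the hypothesis $\sigma > \sigma_\circ(\lambda)$ is equivalent to the existence of some $q > q_{\mathit{opt}}$ with $(q'/p_\lambda)' = \sigma$; and the hypothesis $\sigma = \sigma_\circ(1/6) = 5/2$ corresponds exactly to taking $q = q_{\mathit{opt}} = 2$ at $p_\lambda = 6/5$.

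Next I would invoke Theorem \ref{thm:new-sparse-bound-2D} (for $\lambda \in (0, 1/2) \setminus \{1/6\}$) to obtain $\cR^\lambda_a \in \Sp(p_\lambda, q; \R^2)$ for the chosen $q > q_{\mathit{opt}}$, or Theorem \ref{thm:STendpt} for $\lambda = 1/6$; in the latter case the vertex $P_2 = (5/6, 1/2)$ of the trapezoid $\trapez_d(1/6)$ in $d=2$ corresponds precisely to the pair $(p_\lambda, q_{\mathit{opt}})$, so the \emph{closed} endpoint sparse bound is available and accounts for the equality case $\sigma = \sigma_\circ(1/6)$. Plugging this into \eqref{eq:charact}, and using $w \in A_1 \cap \RH_\sigma \Longleftrightarrow w^\sigma \in A_1 \subset A_\infty$ (\cite{JohnsonNeugebauer91}) to control the $A_\infty$-factor in terms of $[w]_{A_1}$ and $[w]_{\RH_\sigma}$, yields the $L^{p_\lambda}(w) \to L^{p_\lambda, \infty}(w)$ bound for $\cR^\lambda_a = \cR^\lambda_{a,1}$ with a finite constant $C(w)$.

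Finally, to upgrade to uniformity in $t$, I would use the scaling identity $\cR^\lambda_{a,t} f(x) = \cR^\lambda_a[f(\cdot/t)](tx)$: a change of variables $y = tx$ reduces the weighted weak type bound for $\cR^\lambda_{a,t}$ with weight $w$ to the bound for $\cR^\lambda_a$ with the dilated weight $\widetilde w := w(\cdot/t)$, and the Jacobian factors on both sides cancel. Since the $A_1$ and $\RH_\sigma$ characteristics of $\widetilde w$ equal those of $w$ (by the dilation invariance recalled before the statement of the corollary), the resulting constant $C(\widetilde w)$ is independent of $t$. Most of the serious work has already been done in Theorems \ref{thm:new-sparse-bound-2D}, \ref{thm:STendpt} and in \cite{FreyNieraeth}, so I do not anticipate a substantive obstacle; the only points requiring attention are the identification $\sigma_\circ(\lambda) = (q_{\mathit{opt}}'/p_\lambda)'$ and the observation that the closed endpoint in Theorem \ref{thm:STendpt} makes the case $\lambda = 1/6$ genuinely stronger than what Theorem \ref{thm:new-sparse-bound-2D} would allow by a limiting argument.
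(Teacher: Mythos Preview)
Your proposal is correct and follows precisely the route the paper indicates in the paragraph preceding the corollary: combine the sparse bounds of Theorems \ref{thm:new-sparse-bound-2D} and \ref{thm:STendpt} with the Frey--Nieraeth inequality \eqref{eq:charact}, and use dilation invariance of the weight characteristics for uniformity in $t$. Your verification of the numerology $(q_{\mathit{opt}}'/p_\lambda)'=\sigma_\circ(\lambda)$ and your handling of the endpoint case $\lambda=1/6$ via Theorem \ref{thm:STendpt} are exactly what the paper has in mind; there is nothing to add.
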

Note that when $\la\to 1/2$ the reverse H\"older exponent tends to $1$ which is to be expected since no reverse H\"older condition is needed in Vargas' result \cite{Vargas96} for $p=1$, $\la=1/2$. Similar results can be formulated in higher dimensions for $\sigma_\circ(\lambda)=\frac{(d-1)(d+1+2\lambda)}{4d\lambda}$ and a partial range of $\lambda$, depending on the knowledge of sharp $L^p \to L^r$ for the Bochner--Riesz operator. In particular, in view of Remark \ref{rem:listofremarks}, this currently holds for $\frac{d-1}{2(d+1)} \leq \lambda < \frac{d-1}{2}$, which suffices to establish Theorem \ref{thm:A1}.

\subsection{Proof of Theorem \ref{thm:A1}}

We only prove the case $p>1$ since $p=1$ is Vargas' result \cite{Vargas96}. Let $\sigma_*=\sigma_\circ(\frac{d-1}{2(d+1)})=\frac{d+3}{2}>1$. It is well known \cite{CoifmanFefferman1974} that every $A_1$ weight belongs to $\RH_{\sigma(w)}$ for some $\sigma(w)>1$; without loss of generality we can assume $1<\sigma(w)< \sigma_*$. Let $p_1(w):=1+\frac{d-1}{d+1}(1-\frac{1}{\sigma(w)})$ and $1 < p < p_1(w)$. By the preceding discussion, we have that $\cR_{a,t}^{\lambda(p)}$ maps $L^p(w) \to L^{p,\infty}(w)$ provided
\begin{enumerate}[(i)]
\item $\sigma(w) > \sigma_\circ(\lambda(p))$;
\item $\frac{d-1}{2(d+1)}<\lambda(p) < \frac{d-1}{2}$.
\end{enumerate}
On the one hand, the condition $\sigma(w)> \sigma_\circ(\lambda(p))$ can be quickly seen to be equivalent to $\frac{d^2-1}{4d\sigma(w)-2(d-1)}< \lambda(p)$, which in turn is equivalent to the condition $p< \frac{d-1}{d+1}(\frac{2d}{d-1}-\frac{1}{\sigma(w)})=p_1(w)$, which holds by assumption.

On the other hand, since $\sigma_\circ(\lambda(p)) < \sigma(w) < \sigma_*=\sigma_\circ(\frac{d-1}{2(d+1)})$ and $\sigma_\circ(\lambda)$ decreases as a function of $\lambda$, we have $\lambda(p)>\frac{d-1}{2(d+1)}$. Moreover, since $p>1$ we have $\sigma(\lambda(p))>\sigma(\lambda(1))=\sigma(\frac{d-1}{2})$, which implies $\lambda(p)<\frac{d-1}{2}$, concluding the proof of (ii).

By the above-mentioned invariance properties and
\eqref{eq:charact}, the operator norms are uniform in $t$. Moreover, since the usual approximation of the identity results with $L^1$ kernels hold in $L^p(w)$ with $A_1$ weights one can use routine arguments to see that $\lim_{t\to\infty} \cR_{a,t}^{\la(p)} f= f$ in the $L^{p,\infty}(w)$ norm, for all $f\in L^p(w)$. \qed

\def\MR#1{}
\providecommand{\bysame}{\leavevmode\hbox to3em{\hrulefill}\thinspace}
\providecommand{\MR}{\relax\ifhmode\unskip\space\fi MR }
\providecommand{\MRhref}[2]{%
  \href{http://www.ams.org/mathscinet-getitem?mr=#1}{#2}
}
\providecommand{\href}[2]{#2}

\end{document}